\numberwithin{equation}{section}
\begin{document}

\newtheorem{thm}{Theorem}[section]
\newtheorem{cor}[thm]{Corollary}
\newtheorem{prop}[thm]{Proposition}
\newtheorem{conj}[thm]{Conjecture}
\newtheorem{lem}[thm]{Lemma}
\newtheorem{Def}[thm]{Definition}
\newtheorem{rem}[thm]{Remark}
\newtheorem{prob}[thm]{Problem}
\newtheorem{ex}{Example}[section]

\newcommand{\be}{\begin{equation}}
\newcommand{\ee}{\end{equation}}
\newcommand{\ben}{\begin{enumerate}}
\newcommand{\een}{\end{enumerate}}
\newcommand{\beq}{\begin{eqnarray}}
\newcommand{\eeq}{\end{eqnarray}}
\newcommand{\beqn}{\begin{eqnarray*}}
\newcommand{\eeqn}{\end{eqnarray*}}
\newcommand{\bei}{\begin{itemize}}
\newcommand{\eei}{\end{itemize}}

\newcommand{\pa}{{\partial}}
\newcommand{\V}{{\rm V}}
\newcommand{\R}{{\bf R}}
\newcommand{\K}{{\rm K}}
\newcommand{\e}{{\epsilon}}
\newcommand{\tomega}{\tilde{\omega}}
\newcommand{\tOmega}{\tilde{Omega}}
\newcommand{\tR}{\tilde{R}}
\newcommand{\tB}{\tilde{B}}
\newcommand{\tGamma}{\tilde{\Gamma}}
\newcommand{\fa}{f_{\alpha}}
\newcommand{\fb}{f_{\beta}}
\newcommand{\faa}{f_{\alpha\alpha}}
\newcommand{\faaa}{f_{\alpha\alpha\alpha}}
\newcommand{\fab}{f_{\alpha\beta}}
\newcommand{\fabb}{f_{\alpha\beta\beta}}
\newcommand{\fbb}{f_{\beta\beta}}
\newcommand{\fbbb}{f_{\beta\beta\beta}}
\newcommand{\faab}{f_{\alpha\alpha\beta}}

\newcommand{\pxi}{ {\pa \over \pa x^i}}
\newcommand{\pxj}{ {\pa \over \pa x^j}}
\newcommand{\pxk}{ {\pa \over \pa x^k}}
\newcommand{\pyi}{ {\pa \over \pa y^i}}
\newcommand{\pyj}{ {\pa \over \pa y^j}}
\newcommand{\pyk}{ {\pa \over \pa y^k}}
\newcommand{\dxi}{{\delta \over \delta x^i}}
\newcommand{\dxj}{{\delta \over \delta x^j}}
\newcommand{\dxk}{{\delta \over \delta x^k}}

\newcommand{\px}{{\pa \over \pa x}}
\newcommand{\py}{{\pa \over \pa y}}
\newcommand{\pt}{{\pa \over \pa t}}
\newcommand{\ps}{{\pa \over \pa s}}
\newcommand{\pvi}{{\pa \over \pa v^i}}
\newcommand{\ty}{\tilde{y}}
\newcommand{\bGamma}{\bar{\Gamma}}

\title { Harnack inequality and the relevant theorems on Finsler metric measure manifolds \footnote{The first author is supported by the National Natural Science Foundation of China (12371051, 12141101, 11871126). The second author is supported by the Chongqing Postgraduate Research and Innovation Project (CYB23231).}}
\author{ Xinyue Cheng$^{1}$ \& Yalu Feng$^{1}$ \\
$^{1}$ School of Mathematical Sciences, Chongqing Normal University, \\
Chongqing, 401331, P.R. China\\
E-mails: chengxy@cqnu.edu.cn \& fengyl2824@qq.com }
\date{}

\maketitle

\begin{abstract}
In this paper, we carry out in-depth research centering around the Harnack inequality for positive solutions to nonlinear heat equation on Finsler metric measure manifolds with weighted Ricci curvature ${\rm Ric}_{\infty}$ bounded below. Aim on this topic, we first give a volume comparison theorem of Bishop-Gromov type. Then we prove a weighted Poincar\'{e} inequality by using Whitney-type coverings technique and give a local uniform Sobolev inequality. Further, we obtain two mean value inequalities for positive subsolutions and supersolutions of a class of parabolic differential equations. From the mean value inequality, we also derive a new local gradient estimate for positive solutions to heat equation. Finally, as the application of the mean value inequalities and weighted Poincar\'{e} inequality, we get the desired Harnack inequality for positive solutions to  heat equation.\\
{\bf Keywords:} Finsler metric measure manifold; weighted Ricci curvature; heat equation; mean value inequality; weighted Poincar\'{e} inequality; Harnack inequality\\
{\bf Mathematics Subject Classification:} 53C60,  53B40, 58C35
\end{abstract}

\maketitle

\section{Introduction}

The study on Harnack inequalities is one of the important topics in geometric analysis on Riemannian manifolds. The classical Harnack inequality states that a positive  harmonic function $u(x)$ defined in an $n$-dimensional Euclidean ball $B_{R}(x_{0})$ of radius $R$ satisfies the estimate
\be
\sup\limits_{B_r} u  \leq C \inf\limits_{B_r}u,  \label{Harhar}
\ee
where $B_r$ is a concentric ball of radius $r<R$,  $C=C(R/r, n)$ is a constant depending only on the ratio $R/r$ of the radii and the dimension of the space. From (\ref{Harhar}) one can deduce many important properties of harmonic functions, and it is therefore not surprising that much effort has been expended to generalize Harnack's inequality to solutions of elliptic and then parabolic equations. For example, in 1961, Moser published his famous iterative argument \cite{Moser}, giving a proof of the elliptic Harnack inequality for positive solutions of uniformly elliptic equations in $\mathbb{R}^n$.
Grigor'yan  and Saloff-Coste proved independently scale-invariant parabolic Harnack inequality on complete Riemannian manifolds. Concretely, we say that a Riemannian manifold $M$ satisfies the scale-invariant parabolic Harnack principle if there exists a constant $C$ such that for any $x \in M$ and $s\in \mathbb{R}, r>0$, and any positive solution $u= u(x,t)$ of heat equation $\left(\Delta -\partial_t\right) u=0$ in $B_{r}(x) \times \left(s-r^2, s\right)$, we have
\be
\sup _{Q_{-}}\{u\} \leq C \inf _{Q_{+}}\{u\}, \label{GYCo}
\ee
where $Q_{+}, Q_{-}$ is respectively the upper and lower subcylinders
\beqn
 Q_{+}&=& B_{(1/2)r}(x) \times \left(s-(1/4) r^2, s\right), \\
 Q_{-}&=& B_{(1/2)r}(x) \times \left(s-(3/4) r^2, s-(1/2) r^2 \right).
\eeqn
Grigor'yan \cite{Grigor} and Saloff-Coste \cite{LSC} both proved that a complete Riemannian manifold $M$ satisfies the scale-invariant parabolic Harnack principle if and only if $M$ satisfies the doubling volume property and a Poincar\'{e} inequality for all $f\in \mathcal{C}^{\infty}(B_r(x))$ holds. From inequality (\ref{GYCo}), one can derive some differential Harnack inequalities (\cite{PS}).  The differential  Harnack inequality follows also easily from gradient estimates obtained by P. Li and S-T. Yau under Ricci curvature lower bounds (e.g. \cite{Li-Yau,Yau}). The Harnack inequalities in Riemann geometry play important role in the study of Liouville theorems, heat kernel bounds and Ricci flow, etc. For more details, see \cite{Hal, PS}.

It is natural to study and develop Harnack inequalities and the relevant theories on Finsler metric measure manifolds. However, the study of positive solution for the heat equations on Finsler metric measure manifolds becomes more complicated because of some obstructions. Unlike Riemannian case, Finsler Laplacian is a nonlinear elliptic differentials operators of the second order and has no definition at the maximum point of the function. A Finsler metric measure manifold (or Finsler measure space) $(M,F,m)$ is not a metric space in usual sense because $F$ may be nonreversible, i.e., $F(x, y) \neq F(x,-y)$ may happen. This non-reversibility causes the asymmetry of the associated distance function.  In addition to these, the solutions of heat equations are lack of sufficient regularity. Fortunately, some effective techniques and approaches have been found in order to overcome these difficulties. For instance, Ohta and Sturm introduced the (nonlinear) heat flow $\pa_{t}u= \Delta u$ (in a weak sense) on Finsler manifolds and applied the classical technique due to Saloff-Coste \cite{LSC2} to show the unique existence and a certain interior regularity of solutions to the heat flow \cite{OS1}. They further derived a Li-Yau's gradient estimate  as well as differential Harnack inequalities  for positive global solutions to the nonlinear heat flow on a compact Finsler manifold  with weighted Ricci curvature ${\rm Ric}_N$ bounded below by using the Bochner-Weitzenb\"{o}ck formula established in \cite{OS2}. Recently, Q. Xia \cite{XiaQ} gave further Li-Yau's gradient estimates for positive solutions to the nonlinear heat equation on compact Finsler manifolds without boundary or with a convex boundary or complete noncompact Finsler manifolds under the assumption that the weighted Ricci curvature ${\rm Ric}_{N}$ has a lower bound. As applications, Q. Xia also obtained the corresponding differential Harnack and mean value inequalities for positive solutions to the nonlinear heat equation on Finsler measure spaces.

In this paper, we always use $(M, F, m)$ to denote a Finsler manifold $(M, F)$ equipped with a smooth measure $m$ which we call a Finsler metric measure manifold (or Finsler measure space briefly). In order to overcome the defect that a Finsler metric $F$ may be nonreversible, Rademacher defined the reversibility $\Lambda$ of $F$ by
\be
\Lambda:=\sup _{(x, y) \in TM \backslash\{0\}} \frac{F(x,y)}{F(x, -y)}.
\ee
Obviously, $\Lambda \in [1, \infty]$ and $\Lambda=1$ if and only if $F$ is reversible \cite{Ra}. On the other hand, Ohta extended the concepts of uniform smoothness and the uniform convexity in Banach space theory into Finsler geometry and gave their geometric interpretation (\cite{Ohta}).
The uniform smoothness and uniform convexity mean that there exist two uniform constants $0<\kappa^{*}\leq 1 \leq \kappa <\infty$ such that for $x\in M$, $V\in T_xM\setminus \{0\}$ and $W\in T_xM$, we have
\begin{equation}
\kappa^*F^2(x, W)\leq g_V(W, W)\leq \kappa F^2(x, W), \label{usk}
\end{equation}
where $g_V$ is the induced weighted Riemann metric on the tangent bundle of corresponding Finsler manifolds (see (\ref{weiRiem})).  If $F$ satisfies the uniform smoothness and uniform convexity, then $\Lambda$ is finite with
$$
1 \leq \Lambda \leq \min \left\{\sqrt{\kappa}, \sqrt{1 / \kappa^*}\right\}.
$$
$F$ is Riemannian if and only if $\kappa=1$ if and only if $\kappa^*=1$ (\cite{ChernShen,Ohta,Ra}).

Let $x_{0}\in M$. The forward and backward geodesic balls of radius $R$ with center at $x_{0}$ are respectively defined by
$$
B_R^{+}(x_0):=\{x \in M \mid d(x_{0}, x)<R\},\qquad B_R^{-}(x_0):=\{x \in M \mid d(x, x_{0})<R\}.
$$
In the following, we will always denote $B_{R}:=B^{+}_{R}(x_0)$ for some $x_{0}\in M$ for simplicity. Further, let ${\bf S}={\bf S}(x, y)$ be the $S$-curvature of $F$ and
\be
\delta (x) := \sup\limits_{y\in T_{x}M\setminus \{0\}}\frac{|{\bf S}(x,y)|}{F(x,y)}, \ \ \ \ \delta := \sup\limits_{x\in M}\delta (x).
\ee
For more details, see Section \ref{Introd}.

Firstly,  we will give a volume comparison theorem of Bishop-Gromov type which is crucial for the following discussions.

\begin{thm}\label{volume}
Let $(M, F, m)$ be an $n$-dimensional forward complete Finsler measure space. Assume that ${\rm Ric}_{\infty}\geq -K$  for some $K\geq 0$. Then, along any minimizing geodesic starting from the center $x_0$ of $B^{+}_{R}(x_0)$, we have the following for any $0< r_{1} < r_{2} < R$
\be
\frac{\sigma (x_0, r_{2}, \theta)}{\sigma (x_0, r_{1}, \theta)}\leq \left(\frac{r_{2}}{r_{1}}\right)^{n}e^{\frac{K+\delta^{2}}{6}(r_{2}^{2}-r_{1}^{2})}, \label{volcoe}
\ee
where $\sigma (x_0, r, \theta)$ is the volume coefficient denoted by the geodesic polar coordinate $(r, \theta)$ centered at $x_0$ for $x\in B^{+}_{R}(x_0)$ with $r=d(x_0, x)$.  Further, we have
\be
\frac{m\left(B_{r_{2}}(x_0)\right)}{m\left(B_{r_{1}}(x_0)\right)}\leq \left(\frac{r_{2}}{r_{1}}\right)^{n+1}e^{\frac{K+\delta^{2}}{6}(r_{2}^{2}-r_{1}^{2})}. \label{volcom}
\ee
\end{thm}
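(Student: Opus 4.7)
The plan is to work in geodesic polar coordinates $(r,\theta)$ centered at $x_{0}$, restricted to the interior of the cut locus where they are smooth, and to analyze the weighted volume density $J(r):=\sigma(x_{0},r,\theta)$ along a fixed minimizing radial geodesic $\gamma_{\theta}$. The central step will be a Riccati-type differential inequality for $\log J$ that combines the lower bound ${\rm Ric}_{\infty}\geq -K$ with the pointwise control $|\mathbf{S}|\leq\delta F$.

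Writing the measure as $m=e^{-\psi}\,d\mathrm{vol}_{F}$, I would decompose (up to the sign convention for the weight)
\[(\log J)'(r)=H(r)+\mathbf{S}\bigl(\dot\gamma_{\theta}(r)\bigr),\]
where $H$ is the Finsler mean curvature of the geodesic sphere at $\gamma_{\theta}(r)$ and the $\mathbf{S}$-contribution arises from differentiating $\psi$ along $\gamma_{\theta}$. Differentiating once more and combining the Finsler Jacobi/Riccati identity $H'(r)=-{\rm Ric}(\dot\gamma_{\theta})-{\rm Tr}(U^{2}(r))$ (with $U$ the shape operator of the geodesic sphere) with the definition ${\rm Ric}_{\infty}={\rm Ric}+\dot{\mathbf{S}}$ gives
\[(\log J)''(r)=-{\rm Ric}_{\infty}\bigl(\dot\gamma_{\theta}(r)\bigr)-{\rm Tr}\bigl(U^{2}(r)\bigr).\]
Setting $v:=(\log J)'$, combining the above with ${\rm Ric}_{\infty}\geq -K$, the Cauchy--Schwarz bound ${\rm Tr}(U^{2})\geq H^{2}/(n-1)$, the elementary identity $(v-\mathbf{S})^{2}\geq \tfrac{n-1}{n}v^{2}-(n-1)\mathbf{S}^{2}$ (which is just $\tfrac{1}{n}(v-n\mathbf{S})^{2}\geq 0$), and $|\mathbf{S}|\leq\delta$, one arrives at the Riccati inequality
\[v'(r)+\tfrac{1}{n}v(r)^{2}\leq K+\delta^{2}.\]

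A standard ODE comparison with the explicit solution $w(r)=\sqrt{n(K+\delta^{2})}\coth\!\left(\sqrt{(K+\delta^{2})/n}\,r\right)$ of the corresponding equality (singled out by the singular asymptotic $w(r)\sim n/r$ as $r\to 0^{+}$) forces $v(r)\leq w(r)$ throughout $(0,R)$. Integrating this pointwise bound from $r_{1}$ to $r_{2}$ and using the classical monotonicity that $\sinh(bx)/(x\,e^{b^{2}x^{2}/6})$ is non-increasing in $x$, applied with $b=\sqrt{(K+\delta^{2})/n}$, produces
\[\log\frac{J(r_{2})}{J(r_{1})}\leq n\log\frac{r_{2}}{r_{1}}+\frac{K+\delta^{2}}{6}\bigl(r_{2}^{2}-r_{1}^{2}\bigr),\]
which exponentiates to (\ref{volcoe}). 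For the ball-volume bound (\ref{volcom}), I would integrate (\ref{volcoe}) over the unit sphere of directions at $x_{0}$ to get the analogous comparison for the spherical area $g(r):=\int J(r,\theta)\,d\theta$, then for each $s\in(0,r_{2}]$ use it to bound $g(s)\leq (r_{2}/r_{1})^{n}\,e^{(K+\delta^{2})(r_{2}^{2}-r_{1}^{2})/6}\,g(sr_{1}/r_{2})$. The substitution $t=sr_{1}/r_{2}$ in $\int_{0}^{r_{2}}g(s)\,ds$ then contributes one additional Jacobian factor $r_{2}/r_{1}$, which is exactly what raises the exponent from $n$ to $n+1$.

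The main obstacle I expect is establishing the Riccati inequality above with the correct constants in the weighted Finsler setting: the sign convention for $\mathbf{S}$ relative to $\psi$ must be fixed consistently, and the Cauchy--Schwarz bound ${\rm Tr}(U^{2})\geq H^{2}/(n-1)$ has to be combined with the identity $(v-\mathbf{S})^{2}\geq\tfrac{n-1}{n}v^{2}-(n-1)\mathbf{S}^{2}$ in precisely the right way so that the two separate $\mathbf{S}^{2}$ contributions assemble into a single $\delta^{2}$ term and the coefficient of $v^{2}$ comes out as exactly $1/n$ (which is what yields the exponent $n$ in (\ref{volcoe})). Secondary technicalities are the restriction to the interior of the cut locus of $x_{0}$ (where $J$ is smooth in $r$) and respecting the forward asymmetry of $F$, which is why the theorem is stated only for \emph{forward} geodesic balls.
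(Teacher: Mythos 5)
Your proposal is correct and takes essentially the same route as the paper: the same Riccati inequality $v'+\tfrac{1}{n}v^{2}\le K+\delta^{2}$ for $v=\Delta r=\partial_r\log\sigma$ (obtained from the Bochner/Riccati identity along the radial geodesic, Cauchy--Schwarz, and the same elementary inequality absorbing the ${\bf S}$-curvature into $\delta^{2}$), integrated from $r_1$ to $r_2$ to give (\ref{volcoe}), followed by the same rescaling substitution that produces the extra factor $r_2/r_1$ and hence the exponent $n+1$ in (\ref{volcom}). The only cosmetic difference is the intermediate step: the paper multiplies the Riccati inequality by $r^{2}$ and integrates to get $\Delta r\le \tfrac{n}{r}+\tfrac{K+\delta^{2}}{3}r$ directly, whereas you pass through the explicit $\coth$ comparison solution and the monotonicity of $\sinh(t)/(t e^{t^{2}/6})$, which is equivalent via $\coth t\le \tfrac{1}{t}+\tfrac{t}{3}$.
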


Based on the above volume comparison theorem and the Sobolev inequality (\ref{Sobolev-2}) given in Section \ref{WPI}, we have the following mean value inequality for positive subsolutions of a class of parabolic differential equations.

\begin{thm}\label{mean-ineq}
Let $(M, F, m)$ be an $n$-dimensional forward complete Finsler measure space with finite reversibility $\Lambda$. Assume that ${\rm Ric}_{\infty}\geq -K$ for some $K\geq 0$. Suppose that $u(x, t)$ is a positive function defined on $Q:=B_R\times (s-R^2, s)$ satisfying
$$
\left(\Delta- \pa_{t}\right) u \geq -fu
$$
in the weak sense, where $f\in L^{\infty}(Q)$ is nonnegative. Fix $0<p <\infty$. Then for any real number $s\geq R^2$ and $0<\delta<\delta'\leq1$, there are constants $\nu>2$ and $C=C(n, \nu, p, \Lambda)>0$ depending on $n, \nu, p$ and  $\Lambda$, such that
\begin{equation}
\sup _{Q_{\delta}} u^p \leq e^{C\left(1+(K+\delta^2)R^2\right)}\Xi_{\mathcal{A}, R} (\delta'-\delta)^{-(2+\nu)} R^{-2}m\left(B_R\right)^{-1}  \int_{Q_{\delta'}} u^{p} dmdt, \label{meanineq-1}
\end{equation}
where $Q_{\delta}:=B_{\delta R}\times (s-\delta R^2, s)$ and $\Xi_{\mathcal{A}, R}:=(7\Lambda^2+2\mathcal{A}R^2)^{1+\frac{\nu}{2}}$, $\mathcal{A}:=\sup\limits_{Q}f$, $m\left(B_R\right)$ denotes the volume of $B_R$ with respect to the measure $m$.
\end{thm}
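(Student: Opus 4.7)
The plan is to run a parabolic Moser iteration adapted to the Finsler measure space. Theorem \ref{volume} supplies the local volume doubling needed to apply the Sobolev inequality (\ref{Sobolev-2}) of Section \ref{WPI}, and these two ingredients together drive the iteration.

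\textbf{Step 1 (Caccioppoli-type energy inequality).} Fix $p>0$ and choose a spacetime cutoff $\phi(x,t)$ supported in $Q_{\delta'}$ with $\phi\equiv 1$ on $Q_\delta$ and $|\pa_t\phi|$, $F(\nabla\phi)$ controlled by $[(\delta'-\delta)R]^{-1}$. Test the weak inequality $(\Delta-\pa_t)u\geq -fu$ against $\phi^{2}u^{p-1}$ (after shifting $u\mapsto u+\e$ if $p<1$ and then letting $\e\downarrow 0$). Integration by parts in the Finsler setting, using the pointwise bounds (\ref{usk}) between $g_{\nabla u}$ and $F^{2}$ together with $F(-V)\leq\Lambda F(V)$ to absorb the cross term coming from $\nabla\phi^{2}$, yields
\[
\sup_{t}\int\phi^{2}u^{p}\,dm\;+\;\int\!\!\int\phi^{2}F(\nabla u^{p/2})^{2}\,dm\,dt\;\leq\;C(p,\Lambda)\int\!\!\int\bigl[\Lambda^{2}F(\nabla\phi)^{2}+\phi|\pa_{t}\phi|+\mathcal{A}\phi^{2}\bigr]u^{p}\,dm\,dt.
\]

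\textbf{Step 2 (Reverse H\"older step).} Apply (\ref{Sobolev-2}) at each time slice to $v:=\phi u^{p/2}$ and interpolate in time; using Theorem \ref{volume} to control the ratios of volumes that appear, this gives
\[
\Bigl(\int\!\!\int_{Q_{\delta}}u^{p\theta}\,dm\,dt\Bigr)^{1/\theta}\;\leq\;C(n,\nu,\Lambda)\;\frac{e^{c(K+\delta^{2})R^{2}}\,\Xi_{\mathcal{A},R}}{(\delta'-\delta)^{2+\nu}R^{2}\,m(B_R)^{2/\nu}}\int\!\!\int_{Q_{\delta'}}u^{p}\,dm\,dt,
\]
with $\theta:=1+2/\nu>1$. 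The factor $(7\Lambda^{2}+2\mathcal{A}R^{2})^{1+\nu/2}$ collects the $\Lambda^{2}$ appearing on the gradient term of Step 1 and the $\mathcal{A}R^{2}$ picked up when absorbing the reaction term into the Sobolev bound.

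\textbf{Step 3 (Iteration).} Set $p_{k}:=p\theta^{k}$ and pick radii $\sigma_{k}\in[\delta,\delta']$ with $\sigma_{0}=\delta'$, $\sigma_{k}\to\delta$, and $\sigma_{k}-\sigma_{k+1}\sim 2^{-k}(\delta'-\delta)$. Iterating Step 2 over the cylinders $Q_{\sigma_{k}}$ and taking $p_{k}$-th roots produces
\[
\|u\|_{L^{p_{k+1}}(Q_{\sigma_{k+1}})}\;\leq\;\Bigl(\prod_{j=0}^{k}C_{j}^{1/p_{j}}\Bigr)\|u\|_{L^{p}(Q_{\delta'})},
\]
where each $C_{j}$ is a fixed power of $\Xi_{\mathcal{A},R}(\delta'-\delta)^{-(2+\nu)}e^{c(K+\delta^{2})R^{2}}R^{-2}m(B_R)^{-1}$ times a harmless $2^{j(2+\nu)}$. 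Both $\sum p_{j}^{-1}$ and $\sum j/p_{j}$ converge, so letting $k\to\infty$ bounds $\sup_{Q_{\delta}}u$ by $\|u\|_{L^{p}(Q_{\delta'})}$ times a constant of the shape claimed in (\ref{meanineq-1}); raising to the $p$-th power gives the theorem.

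The main obstacle is Step 1: the Finsler Laplacian is nonlinear, so the usual product-rule manipulations are not directly available. One must work in the weak formulation, exploit the pointwise comparisons (\ref{usk}), and use the reversibility $\Lambda$ to convert $g_{\nabla u}$-expressions involving $\nabla\phi$ into the symmetric $F(\nabla\phi)^{2}$ quantities that drive the final estimate. The delicate bookkeeping is tracking $\kappa$, $\kappa^{*}$ and $\Lambda$ through this step so that they consolidate into the single $\Lambda^{2}$-factor visible in $\Xi_{\mathcal{A},R}$; once the Caccioppoli inequality is secured, Steps 2 and 3 proceed essentially as in the Riemannian case.
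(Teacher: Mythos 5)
Your Steps 1--3 reproduce, in outline, exactly the paper's strategy for $p\geq 2$: test with $\varphi^{2}u^{2a-1}$, absorb the cross term using $F^{*}(d\varphi)\leq\Lambda F^{*}(-d\varphi)$, add a time cutoff, apply the Sobolev inequality (\ref{Sobolev-2}) (whose volume factor comes from Theorem \ref{volume}), and iterate with exponents $p\,\theta^{k}$, $\theta=1+2/\nu$. For that range of $p$ the proposal is fine.

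The genuine gap is the range $0<p<2$, and in particular $0<p\leq 1$. Your Step 1 claims a Caccioppoli inequality obtained by testing with $\phi^{2}u^{p-1}$ for the given $p$, with only the remark ``shift $u\mapsto u+\e$ if $p<1$''. That shift avoids division by zero but does not fix the sign problem: integrating by parts gives the gradient term with coefficient $p-1$, i.e.
\begin{equation*}
(p-1)\int \phi^{2}u^{p-2}F^{*2}(du)\,dm \;+\; 2\int \phi\, u^{p-1} d\phi(\nabla u)\,dm \;+\;\frac{1}{p}\int \phi^{2}\partial_{t}(u^{p})\,dm \;\leq\; \int \phi^{2}u^{p}f\,dm,
\end{equation*}
so for $p<1$ the coercive term enters with the wrong sign and no energy estimate of the form you assert can be extracted; for $p$ slightly above $1$ the constant degenerates like $(p-1)^{-1}$ (the paper's manipulation even requires $a\geq 1$, i.e.\ exponent $\geq 2$, to trade $(2a-1)$ for $a$). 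Consequently you cannot start the Moser iteration at the exponent $p$ itself when $p<2$. The paper handles this by a separate second stage: it first proves (\ref{meanineq-1}) for $p\geq 2$, then for $0<p<2$ uses the interpolation $\int_{Q_{\sigma'}}u^{2}\,dm\,dt\leq\bigl(\sup_{Q_{\sigma'}}u^{2}\bigr)^{1-p/2}\int_{Q}u^{p}\,dm\,dt$ inside the $p=2$ estimate, and runs a second iteration over radii $\sigma_{i}\uparrow 1$ with $\lambda=1-\tfrac{p}{2}<1$ to absorb the supremum on the right-hand side. Without this (or an equivalent device, e.g.\ the standard lemma on absorbing sublinear sup-terms), your argument does not prove the theorem for $0<p<2$, which is part of the stated claim.
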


The following is the mean value inequality for positive supersolutions of a class of parabolic differential equations.

\begin{thm}\label{mean-sup}
Let $(M, F, m)$ be an $n$-dimensional forward complete Finsler measure space with finite reversibility $\Lambda$. Assume that ${\rm Ric}_{\infty}\geq -K$ for some $K\geq 0$. Suppose that $u(x, t)$ is a positive function defined on $Q:=B_R\times (s-R^2, s)$ satisfying
$$
\left(\Delta- \pa_{t}\right) u \leq fu
$$
in the weak sense, where $f\in L^{\infty}(Q)$ is nonnegative. Fix $0<p <\infty$. Then for any real number $s\geq R^2$ and $0<\delta<\delta'\leq1$, there are constants $\nu>2$ and $C=C(n, \nu, p, \Lambda)>0$ depending on $n, \nu, p$ and  $\Lambda$, such that
\begin{equation}
\sup _{Q_{\delta}} u^{-p} \leq e^{C\left(1+(K+\delta^2)R^2\right)} \widetilde{\Xi}_{\mathcal{A}, R} (\delta'-\delta)^{-(2+\nu)} R^{-2}m\left(B_R\right)^{-1}  \int_{Q_{\delta'}} u^{-p} dmdt, \label{meanineq-sup-1}
\end{equation}
where $Q_{\delta}:=B_{\delta R}\times (s-\delta R^2, s)$ and $\widetilde{\Xi}_{\mathcal{A}, R}=(3\Lambda^6+\mathcal{A}R^2)^{1+\frac{\nu}{2}}$, $\mathcal{A}:=\sup\limits_{Q}f$.
\end{thm}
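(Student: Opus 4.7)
The proof should proceed by Moser iteration, in parallel with Theorem~\ref{mean-ineq}, but with the preliminary reduction that the supersolution $u$ is converted into a subsolution-type object via the substitution $v := u^{-q/2}$ for some $q > 0$. Concretely, I would test the weak inequality $(\Delta - \pa_t)u \leq fu$ against the nonnegative test function $\phi := \eta^2 u^{-(q+1)}$, where $\eta$ is a space-time cutoff with $\mathrm{supp}(\eta) \subset Q_{\delta'}$ and $\eta \equiv 1$ on $Q_\delta$. Expanding
\begin{equation*}
d\phi(\nabla u) = 2\eta u^{-(q+1)} d\eta(\nabla u) - (q+1)\eta^2 u^{-(q+2)} F^{2}(\nabla u),
\end{equation*}
and using $dv = -\tfrac{q}{2}u^{-q/2-1} du$, so that $F^{*\,2}(dv) = \tfrac{q^2}{4} u^{-q-2} F^{*\,2}(-du)$, together with the reversibility bound $F^{*\,2}(-du) \leq \Lambda^2 F^{*\,2}(du)$, one obtains the pointwise lower bound $u^{-q-2} F^{2}(\nabla u) \geq \tfrac{4}{q^2\Lambda^2} F^{*\,2}(dv)$. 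Absorbing the cross term $2\eta u^{-(q+1)} d\eta(\nabla u)$ by Cauchy--Schwarz and integrating in time yields the $L^2$ energy inequality
\begin{equation*}
\sup_t \int \eta^2 v^2\, dm + \int_{Q_{\delta'}} \eta^2 F^{*\,2}(dv)\, dm\, dt \leq C(q,\Lambda) \int_{Q_{\delta'}} \bigl( F^{*\,2}(d\eta) + |\pa_t(\eta^2)| + \mathcal{A}\eta^2 \bigr) v^2\, dm\, dt.
\end{equation*}

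\textbf{Moser iteration and bookkeeping.} Applied to $\eta v$, the Sobolev inequality (\ref{Sobolev-2}) from Section~\ref{WPI} boosts $L^2$-integrability on $Q_{\delta'}$ to $L^{2(1+\nu/2)}$. Iterating this step over a nested sequence $\delta = \delta_\infty < \cdots < \delta_1 < \delta_0 = \delta'$ with $\delta_k - \delta_{k+1}$ decaying geometrically, and simultaneously over exponents $\ell_k := (1+\nu/2)^k$ applied to $v^2 = u^{-q}$ (choosing $q = p$ at the end), one converts the initial $L^p$-information $\int_{Q_{\delta'}} u^{-p}\, dm\, dt$ into the desired $L^\infty$-control on $\sup_{Q_\delta} u^{-p}$. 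The telescoping product of per-step constants contributes the factor $(\delta' - \delta)^{-(2+\nu)}$, the volume doubling from Theorem~\ref{volume} delivers the exponential $e^{C(1+(K+\delta^2)R^2)}$ together with $R^{-2} m(B_R)^{-1}$, and the remaining dependence on $\mathcal{A} = \sup_Q f$ combined with the reversibility switches accumulates into $\widetilde\Xi_{\mathcal{A}, R} = (3\Lambda^6 + \mathcal{A}R^2)^{1+\nu/2}$.

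\textbf{Main obstacle.} The principal technical difficulty lies in the substitution $u \mapsto v = u^{-q/2}$. Because the Finsler Laplacian is nonlinear and not symmetric under $u \mapsto -u$, the Riemannian chain-rule argument is unavailable, and every identity must be established in the weak sense through the Legendre transform. In particular, since $dv$ and $du$ point in opposite codirections, the norm of $\nabla v$ with respect to $F$ is naturally controlled only by the norm of $\nabla u$ with respect to the reverse Finsler metric $\overleftarrow F(x,y) := F(x,-y)$; each time one must swap between $F^{*}(du)$ and $F^{*}(-du)$ a factor of $\Lambda^2$ is incurred. Three such swaps are needed---one in the energy step, one in the cross-term absorption, and one in applying the Sobolev inequality to $\eta v$---which produces the $\Lambda^6$ in $\widetilde\Xi_{\mathcal{A},R}$, in contrast to the single $\Lambda^2$ in Theorem~\ref{mean-ineq}; careful tracking of these factors through the Moser iteration is the most delicate part of the argument.
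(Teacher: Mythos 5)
Your proposal follows essentially the same route as the paper: test the weak inequality with a nonnegative multiple of $u^{b-1}\varphi^2$ (your $\eta^2u^{-(q+1)}$), substitute $w=u^{b/2}$ and pay reversibility factors $\Lambda$ when exchanging $F^*(du)$ and $F^*(-du)$, derive the Caccioppoli-type energy estimate, apply the Sobolev inequality (\ref{Sobolev-2}), and run the same Moser iteration as in Theorem \ref{mean-ineq}. Two harmless bookkeeping slips: the H\"older--Sobolev step boosts $L^2$ to $L^{2(1+2/\nu)}$, so the iteration exponents are $(1+2/\nu)^k$ rather than $(1+\nu/2)^k$, and the range $0<p<2$ requires the supplementary iteration on suprema used at the end of the proof of Theorem \ref{mean-ineq}; neither changes the approach.
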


It should be pointed out that the mean value inequalities given in Theorem \ref{mean-ineq} and Theorem \ref{mean-sup} are different from those mean value inequalities for positive solutions to heat equation $\pa_{t}u= \Delta u$ in
\cite{XiaQ}. Further, when $F$ is a Riemannian metric, Theorem \ref{mean-ineq} and Theorem  \ref{mean-sup} can be regarded as the weighted version of  Theorem 5.2.9 and Theorem 5.2.16 in \cite{PS} respectively.

As the application of the above mean value inequalities and the weighted Poincar\'{e} inequality given in Section \ref{WPI}, we can prove the following Harnack inequality.

\begin{thm}\label{harnack}
Let $(M, F, m)$ be an $n$-dimensional forward complete Finsler measure space with finite reversibility $\Lambda$. Assume that ${\rm Ric}_{\infty}\geq -K$ for some $K\geq 0$. For any parameters $0<\epsilon<\tau<\delta<1$, if $u$ is a positive solution to heat equation $\pa_{t}u=\Delta u$ in $Q=B_{R}\times (s-R^2, s)$ for $s\geq R^2$, then there exist positive constant $C=C\left(n, \epsilon, \tau, \delta,\Lambda\right)$ depending on $n$, $\epsilon, \tau, \delta$ and  $\Lambda$, such that
\begin{equation}
\sup\limits_{Q_-}u \leq e^{C\left(1+(K+\delta^2)R^2\right)}\inf\limits_{Q_+}u, \label{mean-1}
\end{equation}
where $Q_-:=B_{\delta R}\times (s-\delta R^2, s-\tau R^2)$ and $Q_+:=B_{\delta R}\times (s-\epsilon R^2,s)$.
\end{thm}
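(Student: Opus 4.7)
\textbf{Step 1: Reduction to a two-sided $L^{p_0}$ bound.} The plan is to run the Moser/Saloff-Coste scheme. Since $u$ is simultaneously a sub- and super-solution of $(\Delta-\partial_t)u=0$, Theorem~\ref{mean-ineq} applied with $f\equiv 0$ on a cylinder $\tilde Q_-\supset Q_-$ and Theorem~\ref{mean-sup} applied on $\tilde Q_+\supset Q_+$ give, for every $p>0$,
\begin{equation*}
\sup_{Q_-} u^{p} \le e^{C(1+(K+\delta^2)R^2)} (R^{2} m(B_R))^{-1}\!\int_{\tilde Q_-} u^{p}\,dm\,dt,
\end{equation*}
\begin{equation*}
\sup_{Q_+} u^{-p} \le e^{C(1+(K+\delta^2)R^2)} (R^{2} m(B_R))^{-1}\!\int_{\tilde Q_+} u^{-p}\,dm\,dt.
\end{equation*}
Hence (\ref{mean-1}) will follow once I produce some $p_0 = p_0(n,\Lambda,\epsilon,\tau,\delta)>0$ with
\begin{equation*}
\Bigl(\int_{\tilde Q_-} u^{p_0}\,dm\,dt\Bigr)\Bigl(\int_{\tilde Q_+} u^{-p_0}\,dm\,dt\Bigr) \le e^{C(1+(K+\delta^2)R^2)}\bigl(R^{2} m(B_R)\bigr)^{2},
\end{equation*}
since multiplying the two mean-value bounds with $p=p_0$ and taking $p_0$-th roots then yields $\sup_{Q_-} u \le e^{C'(1+(K+\delta^2)R^2)} \inf_{Q_+} u$.

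\textbf{Step 2: Parabolic BMO estimate for $\log u$.} I would test the equation $\partial_t u=\Delta u$ against $\phi^{2}/u$ for a spatial cutoff $\phi\in C_c^\infty(B_R)$, operating on $\{\nabla u\neq 0\}$ with the linearised Riemannian structure $g_{\nabla u}$ in the Ohta--Sturm framework. The formal identity $\Delta u/u=\Delta(\log u)+F^{*}(\nabla\log u)^{2}$, upgraded to a weak statement, together with integration by parts and the uniform convexity/smoothness bounds coming from finite $\Lambda$, yields
\begin{equation*}
\partial_t\!\int_{B_R} \phi^{2}\log u\,dm + c(\Lambda)\!\int_{B_R} \phi^{2} F^{*}(\nabla\log u)^{2}\,dm \;\le\; C\!\int_{B_R} F^{*}(\nabla\phi)^{2}\,dm.
\end{equation*}
Coupling this $L^{2}$-gradient control with the weighted Poincar\'e inequality from Section~\ref{WPI} produces a function $a(t)$ of bounded variation in $t$ such that for every $t$
\begin{equation*}
\int_{B_{\tau R}} \bigl|\log u(\cdot,t)-a(t)\bigr|^{2}\,dm \;\le\; C\, R^{2} m(B_R),
\end{equation*}
with the gap $a^{+}-a^{-}$ between the time-slabs separating $Q_{-}$ from $Q_{+}$ bounded by $C(1+(K+\delta^{2})R^{2})$.

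\textbf{Step 3: Self-improvement to exponential integrability.} A Bombieri--Giusti / John--Nirenberg self-improvement lemma (whose only analytic inputs are the volume doubling supplied by Theorem~\ref{volume} and the weighted Poincar\'e inequality from Section~\ref{WPI}) then converts the parabolic BMO bound into two-sided exponential integrability: there exists $p_0>0$ with
\begin{equation*}
\int_{\tilde Q_-} e^{\,p_0(\log u-a^{-})}\,dm\,dt \le C\,R^{2}m(B_R), \quad \int_{\tilde Q_+} e^{-p_0(\log u-a^{+})}\,dm\,dt \le C\,R^{2}m(B_R).
\end{equation*}
Multiplying these two integrals and absorbing the factor $e^{p_0(a^{+}-a^{-})}$ into the exponential prefactor discharges Step~1 and hence completes the proof of (\ref{mean-1}).

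\textbf{Main obstacle.} The substantive difficulty is Step~2. In the Finsler setting the log-test must be justified weakly across the singular set $\{\nabla u=0\}$, the Finsler Laplacian is nonlinear, and the asymmetry of $F$ forces one to distinguish forward from backward balls throughout. I would exploit the finite reversibility $\Lambda<\infty$ to interchange forward and backward balls at the cost of powers of $\Lambda$ and use the volume comparison of Theorem~\ref{volume} to guarantee the doubling required for Bombieri--Giusti; some care with regularity (bootstrapping to enough smoothness for the integration by parts, as was done by Ohta--Sturm for the heat flow) will be necessary. Once the parabolic BMO bound is in hand, the combinatorial self-improvement in Step~3 proceeds essentially as in the Riemannian treatment.
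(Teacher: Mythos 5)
Your proposal follows essentially the same route as the paper: mean value inequalities for positive sub/supersolutions (Theorems \ref{mean-ineq} and \ref{mean-sup}), a logarithmic estimate obtained by testing the equation with $\phi^2/u$ and invoking the weighted Poincar\'{e} inequality (the paper's Lemma \ref{log}, where a linear-in-time drift $C_2(t-s')$ plays the role of your $a(t)$ and the output is a weak $L^1$ level-set bound on $\log u$), glued together by an abstract Bombieri--Giusti type lemma (the paper's Lemma \ref{measure}). The only cosmetic difference is that the paper's abstract lemma requires just this weak level-set bound rather than the full John--Nirenberg/BMO self-improvement to exponential integrability you describe, so your Steps 2--3 ask for slightly more than is needed, but the argument is otherwise the same.
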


When $F$ is Riemannian metric, (\ref{mean-1}) can be viewed as the weighted version of the Harnack inequality (\ref{GYCo}) proved by Grigor'yan \cite{Grigor} and Saloff-Coste \cite{LSC}. Hence, Theorem \ref{harnack} is new even in Riemannian setting.

The mean value inequalities and Harnack inequality are key tools for estimating the heat kernel bounds and further studying the geometric and analytical properties of manifold in Riemann geometry. However, due to the nonlinearity of Finsler Laplacian operator, there is no heat kernel in Finsler geometry, which brings many obstacles for our further study. Thus, it is still an open problem whether a Harnack inequality can deduce doubling volume property and a Poincar\'{e} inequality in Finsler geometry.

The paper is organized as follows. In Section \ref{Introd}, we give some necessary definitions and notations. Then we derive a necessary Laplacian comparison theorem and a volume comparison theorem for subsequent applications in Section \ref{sec3-volume}. Further, we get a weighted Poincar\'{e} inequality by Whitney-type coverings technique and give a Sobolev inequality in Section \ref{WPI}. Section \ref{smean} is devoted to the proofs of the mean value inequalities for positive subsolutions and supersolutions of a class of parabolic differential equations.  From the mean inequality, we also obtain a new gradient estimate for positive solutions to heat equation in Section \ref{smean}. Finally, we give some necessary lemmas and then prove the Harnack inequality for positive solutions to heat equation as the application of the mean value inequalities and weighted Poincar\'{e} inequality in Section \ref{sharnack}.

\section{Preliminaries}\label{Introd}
In this section, we briefly review some necessary definitions, notations and  fundamental results in Finsler geometry. For more details, we refer to \cite{BaoChern, ChernShen, OHTA, Shen1}.

\subsection{Finsler metric, connection and curvatures}
Let $M$ be an $n$-dimensional smooth manifold. A Finsler metric on manifold $M$ is a function $F: T M \longrightarrow[0, \infty)$  satisfying the following properties: (1) $F$ is $C^{\infty}$ on $TM\backslash\{0\}$; (2) $F(x,\lambda y)=\lambda F(x,y)$ for any $(x,y)\in TM$ and all $\lambda >0$; (3)  $F$ is strongly convex, that is, the matrix $\left(g_{ij}(x,y)\right)=\left(\frac{1}{2}(F^{2})_{y^{i}y^{j}}\right)$ is positive definite for any nonzero $y\in T_{x}M$. The pair $(M,F)$ is called a Finsler manifold and $g:=g_{ij}(x,y)dx^{i}\otimes dx^{j}$ is called the fundamental tensor of $F$. A non-negative function on $T^{*}M$ with analogous properties is called a Finsler co-metric. For any Finsler metric $F$, its dual metric
\be
F^{*}(x, \xi):=\sup\limits_{y\in T_{x}M\setminus \{0\}} \frac{\xi (y)}{F(x,y)}, \ \ \forall \xi \in T^{*}_{x}M \label{co-Finsler}
\ee
is a Finsler co-metric.

We define the reverse metric $\overleftarrow{F} $ of $F$ by $\overleftarrow{F}(x, y):=F(x,-y)$ for all $(x, y) \in T M$. It is easy to see that $\overleftarrow{F}$ is also a Finsler metric on $M$. A Finsler metric $F$ on $M$ is said to be reversible if $\overleftarrow{F}(x, y)=F(x, y)$ for all $(x, y) \in T M$. Otherwise, we say $F$ is irreversible. For a non-vanishing vector field $V$ on $M$, one introduces the weighted Riemannian metric $g_V$ on $M$ given by
\be
g_V(y, w)=g_{ij}(x, V_x)y^i w^j  \label{weiRiem}
\ee
for $y,\, w\in T_{x}M$. In particular, $g_V(V,V)=F^2(V,V)$.

Let $(M,F)$ be a Finsler manifold of dimension $n$. The pull-back $\pi ^{*}TM$ admits a unique linear connection, which is called the Chern connection. The Chern connection $D$ is determined by the following equations
\beq
&& D^{V}_{X}Y-D^{V}_{Y}X=[X,Y], \label{chern1}\\
&& Zg_{V}(X,Y)=g_{V}(D^{V}_{Z}X,Y)+g_{V}(X,D^{V}_{Z}Y)+ 2C_{V}(D^{V}_{Z}V,X,Y) \label{chern2}
\eeq
for $V\in TM\setminus \{0\}$  and $X, Y, Z \in TM$, where
$$
C_{V}(X,Y,Z):=C_{ijk}(x,V)X^{i}Y^{j}Z^{k}=\frac{1}{4}\frac{\pa ^{3}F^{2}(x,V)}{\pa V^{i}\pa V^{j}\pa V^{k}}X^{i}Y^{j}Z^{k}
$$
is the Cartan tensor of $F$ and $D^{V}_{X}Y$ is the covariant derivative with respect to the reference vector $V$.

Given a non-vanishing vector field $V$ on $M$,  the Riemannian curvature  $R^V$ is defined by
$$
R^V(X, Y) Z=D_X^V D_Y^V Z-D_Y^V D_X^V Z-D_{[X, Y]}^V Z
$$
for any vector fields $X$, $Y$, $Z$ on $M$. For two linearly independent vectors $V, W \in T_x M \backslash\{0\}$, the flag curvature is defined by
$$
\mathcal{K}^V(V, W)=\frac{g_V\left(R^V(V, W) W, V\right)}{g_V(V, V) g_V(W, W)-g_V(V, W)^2}.
$$
Then the Ricci curvature is defined as
$$
\operatorname{Ric}(V):=F(x, V)^{2} \sum_{i=1}^{n-1} \mathcal{K}^V\left(V, e_i\right),
$$
where $e_1, \ldots, e_{n-1}, \frac{V}{F(V)}$ form an orthonormal basis of $T_x M$ with respect to $g_V$.

For $x_1, x_2 \in M$, the distance from $x_1$ to $x_2$ is defined by
$$
d\left(x_1, x_2\right):=\inf _\gamma \int_0^1 F(\gamma(t), \dot{\gamma}(t)) d t,
$$
where the infimum is taken over all $C^1$ curves $\gamma:[0,1] \rightarrow M$ such that $\gamma(0)=$ $x_1$ and $\gamma(1)=x_2$. Note that $d \left(x_1, x_2\right) \neq d \left(x_2, x_1\right)$ unless $F$ is reversible.
A $C^{\infty}$-curve $\gamma:[0,1] \rightarrow M$ is called a geodesic  if $F(\gamma, \dot{\gamma})$ is constant and it is locally minimizing.

The exponential map $\exp _x: T_x M \rightarrow M$ is defined by $\exp _x(v)=\gamma(1)$ for $v \in T_x M$ if there is a geodesic $\gamma:[0,1] \rightarrow M$ with $\gamma(0)=x$ and $\dot{\gamma}(0)=v$. A Finsler manifold $(M, F)$ is said to be forward complete (resp. backward complete) if each geodesic defined on $[0, \ell)$ (resp. $(-\ell, 0])$ can be extended to a geodesic defined on $[0, \infty)$ (resp. $(-\infty, 0])$. We say $(M, F)$ is complete if it is both forward complete and backward complete. By Hopf-Rinow theorem on forward complete Finsler manifolds, any two points in $M$ can be connected by a minimal forward geodesic and the forward closed balls $\overline{B_R^{+}(p)}$ are compact. For a point $p \in M$ and a unit vector $v \in T_p M$, let $\rho(v):=\sup \left\{t>0 \mid \text{the geodesic} \ \exp _p(tv) \ \text{is minimal} \right\}$. If $\rho(v)<\infty$, we call $\exp _p\left(\rho(v) v\right)$ a cut point of $p$. The set of all the cut points of $p$ is called the cut locus of $p$, denoted by $Cut(p)$. The cut locus of $p$ always has null measure and $d_{p}:= d (p, \cdot)$ is $C^1$ outside the cut locus of $p$ (see \cite{BaoChern, Shen1}).

Let $(M, F, m)$ be an $n$-dimensional Finsler manifold with a smooth measure $m$. Write the volume form $dm$ of  $m$ as $d m = \sigma(x) dx^{1} dx^{2} \cdots d x^{n}$. Define
\be\label{Dis}
\tau (x, y):=\ln \frac{\sqrt{{\rm det}\left(g_{i j}(x, y)\right)}}{\sigma(x)}.
\ee
We call $\tau$ the distortion of $F$. It is natural to study the rate of change of the distortion along geodesics. For a vector $y \in T_{x} M \backslash\{0\}$, let $\sigma=\sigma(t)$ be the geodesic with $\sigma(0)=x$ and $\dot{\sigma}(0)=y.$  Set
\be
{\bf S}(x, y):= \frac{d}{d t}\left[\tau(\sigma(t), \dot{\sigma}(t))\right]|_{t=0}.
\ee
$\mathbf{S}$ is called the S-curvature of $F$ \cite{ChernShen, shen}.

Let $Y$ be a $C^{\infty}$ geodesic field on an open subset $U \subset M$ and $\hat{g}=g_{Y}.$  Let
\be
d m:=e^{- \psi} {\rm Vol}_{\hat{g}}, \ \ \ {\rm Vol}_{\hat{g}}= \sqrt{{det}\left(g_{i j}\left(x, Y_{x}\right)\right)}dx^{1} \cdots dx^{n}. \label{voldecom}
\ee
It is easy to see that $\psi$ is given by
$$
\psi (x)= \ln \frac{\sqrt{\operatorname{det}\left(g_{i j}\left(x, Y_{x}\right)\right)}}{\sigma(x)}=\tau\left(x, Y_{x}\right),
$$
which is just the distortion along $Y_{x}$ at $x\in M$ \cite{ChernShen, Shen1}. Let $y := Y_{x}\in T_{x}M$ (that is, $Y$ is a geodesic extension of $y\in T_{x}M$). Then, by the definitions of the S-curvature, we have
\beqn
&&  {\bf S}(x, y)= Y[\tau(x, Y)]|_{x} = d \psi (y),  \\
&&  \dot{\bf S}(x, y)= Y[{\bf S}(x, Y)]|_{x} =y[Y(\psi)],
\eeqn
where $\dot{\bf S}(x, y):={\bf S}_{|m}(x, y)y^{m}$ and ``$|$" denotes the horizontal covariant derivative with respect to the Chern connection  \cite{shen, Shen1}. Further, the weighted Ricci curvatures are defined as follows \cite{ChSh,OHTA}
\beq
{\rm Ric}_{N}(y)&=& {\rm Ric}(y)+ \dot{\bf S}(x, y) -\frac{{\bf S}(x, y)^{2}}{N-n},   \label{weRicci3}\\
{\rm Ric}_{\infty}(y)&=& {\rm Ric}(y)+ \dot{\bf S}(x, y). \label{weRicciinf}
\eeq
We say that Ric$_N\geq K$ for some $K\in \mathbb{R}$ if Ric$_N(v)\geq KF^2(v)$ for all $v\in TM$, where $N\in \mathbb{R}\setminus \{n\}$ or $N= \infty$.

\subsection{Gradient and Finsler Laplacian}

According to Lemma 3.1.1 in \cite{Shen1}, for any vector $y\in T_{x}M\setminus \{0\}$, $x\in M$, the covector $\xi =g_{y}(y, \cdot)\in T^{*}_{x}M$ satisfies
\be
F(x,y)=F^{*}(x, \xi)=\frac{\xi (y)}{F(x,y)}. \label{shenF311}
\ee
Conversely, for any covector $\xi \in T_{x}^{*}M\setminus \{0\}$, there exists a unique vector $y\in T_{x}M\setminus \{0\}$ such that $\xi =g_{y}(y, \cdot)\in T^{*}_{x}M$ . Naturally,  we define a map ${\cal L}: TM \rightarrow T^{*}M$ by
$$
{\cal L}(y):=\left\{
\begin{array}{ll}
g_{y}(y, \cdot), & y\neq 0, \\
0, & y=0.
\end{array} \right.
$$
It follows from (\ref{shenF311}) that
$$
F(x,y)=F^{*}(x, {\cal L}(y)).
$$
Thus ${\cal L}$ is a norm-preserving transformation. We call ${\cal L}$ the Legendre transformation on Finsler manifold $(M, F)$.

Let
$$
g^{*kl}(x,\xi):=\frac{1}{2}\left[F^{*2}\right]_{\xi _{k}\xi_{l}}(x,\xi).
$$
For any $\xi ={\cal L}(y)$, we have
\be
g^{*kl}(x,\xi)=g^{kl}(x,y), \label{Fdual}
\ee
where $\left(g^{kl}(x,y)\right)= \left(g_{kl}(x,y)\right)^{-1}$. If $F$ is uniformly smooth and convex with (\ref{usk}), then  $\left(g^{*ij}\right)$ is uniformly elliptic in the sense that there exists two constants $\tilde{\kappa}=(\kappa^*)^{-1}$, $\tilde{\kappa}^*=\kappa^{-1}$ such that for $x \in M, \ \xi \in T^*_x M \backslash\{0\}$ and $\eta \in T_x^* M$, we have
\be
\tilde{\kappa}^* F^{* 2}(x, \eta) \leq g^{*i j}(x, \xi) \eta_i \eta_j \leq \tilde{\kappa} F^{* 2}(x, \eta). \label{unisc}
\ee

Given a smooth function $u$ on $M$, the differential $d u_x$ at any point $x \in M$,
$$
d u_x=\frac{\partial u}{\partial x^i}(x) d x^i
$$
is a linear function on $T_x M$. We define the gradient vector $\nabla u(x)$ of $u$ at $x \in M$ by $\nabla u(x):=\mathcal{L}^{-1}(d u(x)) \in T_x M$. In a local coordinate system, we can express $\nabla u$ as
\be \label{nabna}
\nabla u(x)= \begin{cases}g^{* i j}(x, d u) \frac{\partial u}{\partial x^i} \frac{\partial}{\partial x^j}, & x \in M_u, \\ 0, & x \in M \backslash M_u,\end{cases}
\ee
where $M_{u}:=\{x \in M \mid d u(x) \neq 0\}$ \cite{Shen1}. In general, $\nabla u$ is only continuous on $M$, but smooth on $M_{u}$.

The Hessian of $u$ is defined by using Chern connection as
$$
\nabla^2 u(X, Y)=g_{\nabla u}\left(D_X^{\nabla u} \nabla u, Y\right).
$$
One can show that $\nabla^2 u(X, Y)$ is symmetric, see \cite{OS2, WuXin}.

Let $(M, F, m)$ be an $n$-dimensional Finsler manifold with a smooth measure $m$. We may decompose the volume form $d m$ of $m$ as $d m=\mathrm{e}^{\Phi} d x^1 d x^2 \cdots d x^n$. Then the divergence of a differentiable vector field $V$ on $M$ is defined by
$$
\operatorname{div}_m V:=\frac{\partial V^i}{\partial x^i}+V^i \frac{\partial \Phi}{\partial x^i}, \quad V=V^i \frac{\partial}{\partial x^i} .
$$
One can also define $\operatorname{div}_m V$ in the weak form by following divergence formula
$$
\int_M \phi \operatorname{div}_m V d m=-\int_M d \phi(V) d m
$$
for all $\phi \in \mathcal{C}_0^{\infty}(M)$. Now we define the Finsler Laplacian $\Delta u$ by
\be
\Delta u:=\operatorname{div}_m(\nabla u). \label{Lapla}
\ee
From (\ref{Lapla}), Finsler Laplacian is a nonlinear elliptic differential operator of the second order.

Let $W^{1, p}(M)(p>1)$ be the space of functions $u \in L^p(M)$ with $\int_M[F(\nabla u)]^p d m<\infty$ and $W_0^{1, p}(M)$ be the closure of $\mathcal{C}_0^{\infty}(M)$ under the (absolutely homogeneous) norm
\be
\|u\|_{W^{1, p}(M)}:=\|u\|_{L^p(M)}+\frac{1}{2}\|F(\nabla u)\|_{L^p(M)}+\frac{1}{2}\|\overleftarrow{F}(\overleftarrow{\nabla} u)\|_{L^p(M)},
\ee
where $\overleftarrow{\nabla} u$ is the gradient vector of $u$ with respect to the reverse metric $\overleftarrow{F}$. In fact $\overleftarrow{F}(\overleftarrow{\nabla} u)=F(\nabla(-u))$.

Note that $\nabla u$ is weakly differentiable, the Finsler Laplacian should be understood in a weak sense, that is, for $u \in W^{1,2}(M)$, $\Delta u$ is defined by
\be
\int_M \phi \Delta u d m:=-\int_M d \phi(\nabla u) dm  \label{Lap1}
\ee
for $\phi \in \mathcal{C}_0^{\infty}(M)$ \cite{Shen1}.

Given a weakly differentiable function $u$ and a vector field $V$ which does not vanish on $M_u$, the weighted Laplacian of $u$ on the weighted Riemannian manifold $\left(M, g_V, m\right)$ is defined  by
$$
\Delta^{V} u:= {\rm div}_{m}\left(\nabla^V u\right),
$$
where
$$
\nabla^V u:= \begin{cases}g^{ij}(x, V) \frac{\partial u}{\partial x^i} \frac{\partial}{\partial x^j} & \text { for } x \in M_u, \\ 0 & \text { for } x \notin M_u .\end{cases}
$$
Similarly, the weighted Laplacian can be viewed in a weak sense. We note that $\nabla^{\nabla u}u=\nabla u$ and $\Delta^{\nabla u} u=$ $\Delta u$. Moreover, it is easy to see that $\Delta u= {\rm tr}_{\nabla u} \nabla^2 u-{\bf S}(\nabla u)$ on $M_u$ \cite{OHTA, WuXin}.

The following Bochner-Weitzenb\"{o}ck type formula established by Ohta-Sturm \cite{OS2} is very important to derive gradient estimates for positive solution to heat equation in this paper.

\begin{thm}{\rm (\cite{OHTA,OS2})}\label{boch}   For $u \in C^{\infty}(M)$, we have
\be
\Delta^{\nabla u}\left[\frac{F^2(\nabla u)}{2}\right]-d(\Delta u)(\nabla u)=\operatorname{Ric}_{\infty}(\nabla u)+\left\|\nabla^2 u\right\|_{\mathrm{HS}(\nabla u)}^2 \label{poinBo}
\ee
on $M_{u}=\{x \in M \mid du (x) \neq 0\}$. Moreover, for $u \in H_{\mathrm{loc}}^{2}(M) \bigcap C^1(M)$ with $\Delta u \in H_{\mathrm{loc}}^{1}(M)$, we have
\beq
&& -\int_M d \phi\left(\nabla^{\nabla u}\left[\frac{F^2(x, \nabla u)}{2}\right]\right) d m  \nonumber \\
&& \ \ =\int_M \phi \left\{d \left(\Delta u\right)(\nabla u)+{\rm Ric}_{\infty}(\nabla u)+\left\|\nabla^2 u\right\|_{H S(\nabla u)}^2\right\} dm \label{BWforinf}
\eeq
for all bounded functions $\phi \in H_{0}^{1}(M) \bigcap L^{\infty}(M)$. Here $\left\|\cdot \right\|_{H S(\nabla u)}$ denotes the Hilbert-Schmidt norm with respect to $g_{\nabla u}$.
\end{thm}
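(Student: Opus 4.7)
The plan is to reduce the Finsler identity to the classical weighted Bochner--Weitzenb\"ock formula on a Riemannian manifold by using $g_{\nabla u}$ as the reference. The crucial observations, both recorded in the excerpt, are that on $M_u$ one has $\nabla^{\nabla u} u = \nabla u$ and $\Delta u = \mathrm{tr}_{\nabla u}\nabla^2 u - \mathbf{S}(\nabla u)$, so the Finsler Laplacian of $u$ coincides with the weighted Laplacian on $(M_u, g_{\nabla u}, m)$; and via (\ref{voldecom}) one writes $dm = e^{-\psi}\,d\mathrm{Vol}_{g_{\nabla u}}$ with $\psi(x) = \tau(x, \nabla u(x))$.

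To derive the pointwise identity at $x_0 \in M_u$, I would choose a local geodesic extension $V$ of the vector $\nabla u(x_0)$ defined on a neighborhood $U$ and apply the classical weighted Bochner formula on the Riemannian manifold $(U, g_V)$ with weight $e^{-\psi}$. That formula expresses $\tfrac{1}{2}\Delta^V_\psi(|du|_{g_V}^2)$ in terms of the Hilbert--Schmidt norm of the $g_V$-Hessian of $u$, a gradient-of-Laplacian term, and the Bakry--\'Emery Ricci tensor $\mathrm{Ric}_{g_V} + \mathrm{Hess}_{g_V}\psi$ evaluated on $\nabla^V u$. At $x_0$ one has $\nabla^V u = \nabla u$ and $g_V(\nabla u, \nabla u) = F^2(\nabla u)$; the Finsler Ricci curvature $\mathrm{Ric}(\nabla u)$ agrees with $\mathrm{Ric}_{g_V}(\nabla u, \nabla u)$ because the flag curvatures of $F$ through the direction $V(x_0)$ coincide with the $g_V$-sectional curvatures; the identities $\mathbf{S}(x_0,V) = V(\psi)$ and $\dot{\mathbf{S}}(x_0,V) = V[V(\psi)]$, combined with $D^V_V V = 0$ at $x_0$, identify $\mathrm{Hess}_{g_V}\psi(V,V)$ with $\dot{\mathbf{S}}(\nabla u)$; and the $g_V$-Hessian of $u$ at $x_0$ is precisely the Finsler Hessian $\nabla^2 u$, since the Chern connection with reference $V(x_0)$ agrees with the Levi-Civita connection of $g_V$ modulo Cartan-tensor terms that drop out when contracted against $\nabla u$. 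Combining these via (\ref{weRicciinf}) produces the right-hand side, while replacing $\Delta^V_\psi u$ by $\Delta u$ (valid at $x_0$ because $V(x_0)=\nabla u(x_0)$) yields the left-hand side. Every resulting quantity depends on $V$ only through $V(x_0)$, so the extension is immaterial.

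For the integral version I would multiply the pointwise identity by $\phi$, integrate over $M$, and rewrite the $\Delta^{\nabla u}$-term via the weak divergence formula (\ref{Lap1}); since $\nabla^{\nabla u}[F^2(\nabla u)/2]$ is defined to vanish off $M_u$, the integration by parts extends to all of $M$ without a boundary contribution from the critical set. The weaker regularity $u \in H^2_{\mathrm{loc}} \cap C^1$ with $\Delta u \in H^1_{\mathrm{loc}}$ is then handled by smooth approximation and dominated convergence, using that each integrand lies in $L^1_{\mathrm{loc}}$ under the stated hypotheses. The main obstacle I anticipate is controlling behavior near the critical set $\{\nabla u = 0\}$, which may carry positive measure in the Sobolev regime: a cut-off of $\phi$ to the region $\{F(\nabla u) > \varepsilon\}$ followed by the limit $\varepsilon \to 0$, combined with the $H^2$-bound on $u$ and the $H^1$-bound on $\Delta u$, is what is required to push the identity through cleanly without introducing spurious terms.
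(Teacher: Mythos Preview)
The paper does not prove Theorem~\ref{boch}; it is quoted verbatim from \cite{OHTA,OS2} in the preliminaries section, so there is no in-paper argument to compare your proposal against.

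Your outline is in the spirit of the Ohta--Sturm proof: linearize via the Riemannian metric $g_{\nabla u}$, invoke the weighted Riemannian Bochner identity, and identify the Bakry--\'Emery term with $\mathrm{Ric}_\infty(\nabla u)$ through (\ref{weRicciinf}). One step deserves more care, however. You pass from $\nabla u$ to a geodesic extension $V$ and then assert that every term depends on $V$ only through $V(x_0)$. That is true for the flag/Ricci curvatures (this is the classical osculating-metric fact you cite), but not obviously for the Laplacian side: $\Delta^{V}h(x_0)$ involves $\partial_{y^k}g^{ij}(x_0,V(x_0))\,\partial_j V^k$, and the function $|du|^2_{g_V}$ differs from $F^2(\nabla u)$ to first order once $V\neq\nabla u$ away from $x_0$. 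So the left-hand side you compute is $\Delta^{V}\bigl[\tfrac12|du|^2_{g_V}\bigr]$, not $\Delta^{\nabla u}\bigl[\tfrac12 F^2(\nabla u)\bigr]$, and these do not coincide at $x_0$ by a zeroth-order matching argument alone. The published proof in \cite{OS2} sidesteps this by working throughout with the Chern connection referred to $V=\nabla u$ itself and deriving the identity from its structure equations, so the reference field is never swapped; the curvature term that emerges is then \emph{by definition} the Finsler $\mathrm{Ric}_\infty(\nabla u)$, with no need to compare two Laplacians under a change of reference. If you wish to keep the geodesic-extension route, you would need to track the Cartan-tensor discrepancies on both sides and show they cancel, which is possible but is additional work not reflected in the sketch.
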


\subsection{Global and local solutions of nonlinear heat flow}

In the following, we introduce global and local solutions to the nonlinear heat equation $\pa_{t}u=\Delta u$ on Finsler metric measure manifolds. Precisely, we say that a function $u=u(x, t)$ on $M\times (0, T)$ is a global solution to the heat equation $\partial_tu=\Delta u$ if it satisfies the following:
\begin{itemize}
  \item[(1)] $u\in L^2((0, T), H^1_0(M))\bigcap H^1((0,T), H^{-1}(M))$;
  \item[(2)] For any $\phi\in \mathcal{C}^{\infty}_0(M)$(or $\phi\in H^{1}_0(M)$) and $t\in (0,T)$, it holds that
  \be
  \int_{M}\phi \cdot \partial_tu\,dm=-\int_{M}d\phi(\nabla u)\,dm.\label{heat-eqn}
  \ee
\end{itemize}

Assume $\Lambda<\infty$. For each initial datum $u_0\in H^1_0(M)$ and $T>0$, there exists a unique global solution $u(x,t)$ to the heat equation $\pa_{t}u=\Delta u$ on $M\times [0, T]$ lying in $L^2\left([0, T], H_0^1(M)\right) \cap H^1\left([0, T], L^2(M)\right)$. Moreover, the distributional Laplacian $\Delta u$ is absolutely continuous with respect to $m$ for all $t\in (0,T)$. Further, the global solution $u(x, t)$ enjoys the $H^2_{loc}$-regularity in $x$ as well as the $\mathcal{C}^{1, \alpha}$-regularity in both $x$ and $t$ on $M\times(0, \infty)$ for some $0< \alpha <1$. Moreover, $\partial_tu$ lies in $H^1_{loc}(M)\bigcap \mathcal{C}(M)$. Besides,  the usual elliptic regularity means that $u$ is $C^{\infty}$ on $\bigcup\limits_{t>0}(M_{u(x.t)}\times \{t\})$.

Given an open interval $I \subset \mathbb{R}$ and an open set $\Omega \subset M$,  a function $u$ on $\Omega \times I$ is a local solution to the heat equation on $\Omega \times I$ if $u \in L_{\mathrm{loc}}^2(\Omega  \times I)$ with $F(x, \nabla u) \in L_{\mathrm{loc}}^2(\Omega  \times I)$ and if
$$
\int_I \int_{\Omega} u \partial_{t} \phi d m dt =\int_I \int_{\Omega} d\phi(\nabla u) d m d t
$$
holds for all $\phi \in \mathcal{C}_{0}^{\infty}(\Omega  \times I)$(or $\phi\in H^1_0(\Omega\times I)$).

Every continuous local solution to the heat equation on $\Omega  \times I$ is of $C^{1, \beta}$ in $x$ and $t$ for some $0<\beta<1$ and lies in $H_{loc}^{2}(M)$. The distributional time derivative $\pa_{t}u$ of any continuous local solution $u$ to the heat equation on $\Omega \times I$ lies in $H_{l oc}^1(M)$ and is H\"{o}lder continuous in $x$ and $t$.  For more details, see \cite{OHTA, OS1}.

\section{Volume comparison theorem}\label{sec3-volume}

Let $(M, F, m)$ be an $n$-dimensional Finsler manifold with a smooth measure $m$ and $x \in M$. Let $\mathcal{D}_x:=M \backslash(\{x\} \cup C u t(x))$ be the cut-domain on $M$. For any $z \in \mathcal{D}_x$, we can choose the geodesic polar coordinates $(r, \theta)$ centered at $x$ for $z$ such that $r(z)=F(v)$ and $\theta^{\alpha}(z)=\theta^{\alpha}\left(\frac{v}{F(v)}\right)$, where $r(z)=d(x, z)$  and $v=\exp _x^{-1}(z) \in T_x M \backslash\{0\}$. It is well known that the distance function $r$ starting from $x \in M$ is smooth on $\mathcal{D}_x$ and $F(\nabla r)=1$ (\cite{BaoChern,Shen1}).  A basic fact is that the distance function $r=d(x, \cdot)$ satisfies the following
\[
\nabla r |_{z}= \frac{\pa}{\pa r}|_{z}.
\]
By Gauss's lemma, the unit radial coordinate vector $\frac{\partial }{{\partial r}}$ and the coordinate vectors $\frac{\partial }{{\partial {\theta ^\alpha }}}$ for $1\leq \alpha \leq n-1$ are mutually vertical with respect to $g_{\nabla r}$ (\cite{BaoChern}, Lemma 6.1.1).  Therefore, we can simply write the volume form at $z=\exp _{x}(r\xi)$  with $v=r \xi$ as $\left.dm\right|_{\exp _x(r \xi)}=\sigma(x, r, \theta) dr d\theta$, where $\xi \in I_{x}:=\left\{\xi \in T_x M \mid F(x, \xi)=1\right\}$.
Then, for forward geodesic ball $B_{R}=B_R^{+}(x)$ of radius $R$ at the center $x \in M$, the volume of $B_R$ is
\be
m(B_R)=\int_{B_R} d m=\int_{B_R \cap \mathcal{D}_x} d m=\int_0^{R} dr \int_{\mathcal{D}_x(r)} \sigma(x, r, \theta) d\theta, \label{volBall}
\ee
where $\mathcal{D}_x(r)=\left\{\xi \in I_x \mid r \xi \in \exp _x^{-1}\left(\mathcal{D}_{x} \right)\right\}$. Obviously, for any $0<s<t<R, \ \mathcal{D}_x(t) \subseteq \mathcal{D}_x(s)$. Besides, by the definition (\ref{Lap1}) of Laplacian, we have (\cite{Shen1, WuXin})
\be
\Delta r=\frac{\partial}{\partial r}\ln \sigma (x, r, \theta). \label{Lapdis}
\ee

\begin{thm}{\rm(Laplacian comparison)}\label{Laplacian}
Let $(M, F, m)$ be an $n$-dimensional forward complete Finsler measure space and $r=d(x_{0},x)$ be the distance function from $x_0$ to $x \in B_{R}^{+}(x_0)$. Assume that ${\rm Ric}_{\infty}\geq -K$ for some $K\in \mathbb{R}$. Then, for any $0< r_{1} < r_{2} < R$, we have
\be
\int_{r_{1}}^{r_{2}}\Delta r dr \leq \ln \left(\frac{r_{2}}{r_{1}}\right)^{n}+\frac{K+ \delta^2}{6}\left(r_{2}^{2}- r_{1}^{2}\right). \label{Lap}
\ee
\end{thm}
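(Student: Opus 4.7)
The plan is to derive a pointwise Riccati-type inequality for $\Delta r$ along any unit-speed minimizing geodesic emanating from $x_0$, exploit the bound $|\mathbf{S}|/F \leq \delta$ to convert the ${\rm Ric}_\infty$ hypothesis into an effective weighted Ricci inequality with finite $N=n+1$, compare against the explicit hyperbolic model, and finally integrate from $r_1$ to $r_2$.

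First I would apply the Bochner formula (\ref{poinBo}) to $u = r$. Since $F(\nabla r) \equiv 1$ along the geodesic, the term $\Delta^{\nabla r}[F^2(\nabla r)/2]$ vanishes while $d(\Delta r)(\nabla r) = \partial_r(\Delta r)$. Using $\Delta r = {\rm tr}_{\nabla r}\nabla^2 r - \mathbf{S}(\nabla r)$ together with the identity $\nabla^2 r(\nabla r, \cdot) = 0$ (a consequence of $F(\nabla r)\equiv 1$), Cauchy-Schwarz on the $(n-1)$-dimensional $g_{\nabla r}$-orthogonal complement of $\nabla r$ gives $\|\nabla^2 r\|^2_{HS(\nabla r)} \geq (\Delta r + \mathbf{S})^2/(n-1)$, hence
\[
\partial_r(\Delta r) + \frac{(\Delta r + \mathbf{S})^2}{n-1} + {\rm Ric}_\infty(\nabla r) \leq 0.
\]
To fold the $\mathbf{S}$-term into a clean finite-dimensional Riccati, I would then apply the weighted Cauchy-Schwarz $a^2/p + b^2/q \geq (a+b)^2/(p+q)$ with $a=-\mathbf{S}$, $b=\Delta r + \mathbf{S}$, $p=1$, $q=n-1$ to obtain $\mathbf{S}^2 + (\Delta r + \mathbf{S})^2/(n-1) \geq (\Delta r)^2/n$. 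Combined with the identity ${\rm Ric}_\infty = {\rm Ric}_{n+1} + \mathbf{S}^2$ (which is (\ref{weRicci3}) for $N=n+1$) and the bound ${\rm Ric}_{n+1}(\nabla r) \geq -K-\delta^2$ (using $F(\nabla r)=1$), this reduces the inequality above to
\[
\partial_r(\Delta r) + \frac{(\Delta r)^2}{n} \leq K + \delta^2.
\]

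I would then compare with the model ODE $h' + h^2/n = K+\delta^2$, whose solution with asymptotic $h(r) \sim n/r$ as $r\to 0^+$ is $h(r) = \sqrt{n(K+\delta^2)}\coth\!\bigl(\sqrt{(K+\delta^2)/n}\,r\bigr)$. Since $\Delta r \sim (n-1)/r < h(r)$ near $0$, the standard Riccati comparison gives $\Delta r(r) \leq h(r)$ along the minimizing portion of the geodesic. Invoking the elementary inequality $\coth(x) \leq 1/x + x/3$ for $x>0$ (which follows from the partial fraction expansion $\coth(x) - 1/x = \sum_{k\geq 1} 2x/(x^2 + k^2\pi^2)$ together with $\sum_{k\geq 1} k^{-2} = \pi^2/6$) yields the pointwise Laplacian bound $\Delta r(r) \leq n/r + (K+\delta^2)r/3$, and integrating from $r_1$ to $r_2$ produces (\ref{Lap}) at once.

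The main delicate step is the algebraic consolidation: the specific choice $N=n+1$ is precisely what converts the uniform bound $|\mathbf{S}|\leq \delta$ into the exact constants $n$ and $(K+\delta^2)/6$ appearing in (\ref{Lap}); any other auxiliary dimension would leave mismatched prefactors. A secondary technical point is verifying the initial inequality $\Delta r < h$ as $r\to 0^+$ required to initiate the Riccati comparison, which follows from the standard near-center asymptotic $\Delta r(r) \sim (n-1)/r$.
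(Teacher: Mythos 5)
Your proposal is correct, and up to the key differential inequality it coincides with the paper's argument: the same pointwise Bochner identity for $r$ with $F(\nabla r)=1$, the same use of $\nabla^2 r(\nabla r,\cdot)=0$ and the Cauchy--Schwarz bound $\|\nabla^2 r\|^2_{HS(\nabla r)}\geq (\Delta r+\mathbf{S})^2/(n-1)$, and the same absorption of $\mathbf{S}$ — your ``effective dimension $N=n+1$'' phrasing is exactly the paper's elementary inequality $(a+b)^2\geq \frac{1}{\lambda+1}a^2-\frac{1}{\lambda}b^2$ with $\lambda=\frac{1}{n-1}$, so both reach $\partial_r(\Delta r)+\frac{(\Delta r)^2}{n}\leq K+\delta^2$. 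Where you diverge is the finish: you solve the model Riccati equation explicitly, invoke a Riccati comparison initialized by the near-center asymptotic $\Delta r\sim (n-1)/r$, and then use $\coth x\leq 1/x+x/3$; the paper instead multiplies the Riccati inequality by $r^2$, observes $\tfrac{1}{r^2}\partial_r(r^2\Delta r)+\tfrac{1}{n}(\Delta r-\tfrac{n}{r})^2\leq \tfrac{n}{r^2}+K+\delta^2$, drops the square, and integrates $\partial_r(r^2\Delta r)\leq n+(K+\delta^2)r^2$ from $0$ to $r$. The two routes give the identical pointwise bound $\Delta r\leq \frac{n}{r}+\frac{(K+\delta^2)r}{3}$; the paper's trick is more elementary (no comparison ODE theorem, no hyperbolic function estimate) and is insensitive to the sign of $K+\delta^2$, whereas your $\coth$ model tacitly assumes $K+\delta^2>0$ and would need the $\cot$ variant (or the $r^2$ trick) when $K+\delta^2\leq 0$, a case the statement with $K\in\mathbb{R}$ formally allows; on the other hand, your route makes the sharper model bound $\Delta r\leq \sqrt{n(K+\delta^2)}\coth\bigl(\sqrt{(K+\delta^2)/n}\,r\bigr)$ visible before it is relaxed. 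Note also that the paper's integration from $0$ uses $r^2\Delta r\to 0$ as $r\to 0^+$, i.e.\ the same asymptotic you flag as the delicate initialization, so neither argument escapes that point.
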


\begin{proof}\ Let $\gamma : [0, r] \rightarrow M$ be a normal minimal geodesic with $\gamma (0)= x_{0}$ and $\gamma (r) = x$. From pointwise Bochner-Weitzenb\"{o}ck formula (\ref{poinBo}) and the fact that  $F(\nabla r)=1$, we have
\[
0= \Delta ^{\nabla r}\left[\frac{F^2(\nabla r)}{2} \right]= {\rm Ric}_{\infty}(\nabla r)+ d(\Delta r)(\nabla r)+\left\|\nabla^2 r\right\|_{{\rm HS}(\nabla r)}^2 .
\]
Because $\nabla r$ is a geodesic field of $F$, we have
\[
\nabla^{2}r(\nabla r, X)= g_{\nabla r}\left(D^{\nabla r}_{\nabla r}\nabla r, X \right)=0
\]
for any tangent vector field $X$. Hence, we can get the following
\[
\left\|\nabla^2 r\right\|_{{\rm HS}(\nabla r)}^{2} \geq \frac{1}{n-1}\left({\rm tr}_{\nabla r}(\nabla^{2} r)\right)^{2}=\frac{1}{n-1}\left(\Delta r +{\bf S}(x, \nabla r) \right)^{2},
\]
where ${\rm tr}_{\nabla r}(\nabla^{2} r)$ denotes the trace of $\nabla^2 r$ with respect to $g_{\nabla r}$. By the assumption, we have
\be
\frac{\pa}{\pa r}(\Delta r)+\frac{1}{n-1}\left(\Delta r +{\bf S}(x, \nabla r) \right)^{2} \leq K. \label{LaSine}
\ee
For any $a, b \in \mathbb{R}$ and $\lambda >0$, it is easy to see that
\[
(a+b)^{2}\geq \frac{1}{\lambda +1}a^{2}-\frac{1}{\lambda}b^{2}.
\]
By taking $a= \Delta r, b={\bf S}(x, \nabla r)$ and $\lambda = \frac{1}{n-1}$, we get
\[
\left(\Delta r +{\bf S}(x, \nabla r) \right)^{2}\geq \frac{n-1}{n}(\Delta r)^{2}-(n-1){\bf S}(x, \nabla r)^{2}.
\]
Then (\ref{LaSine}) becomes
\[
\frac{\pa}{\pa r}(\Delta r)+\frac{(\Delta r)^2}{n}\leq K +{\bf S}(x, \nabla r)^{2} \leq K + \delta^2 ,
\]
from which, we obtain the following
\[
\frac{1}{r^2}\frac{\pa}{\pa r}(r^{2} \Delta r)+\frac{1}{n}\left(\Delta r - \frac{n}{r}\right)^{2}\leq \frac{n}{r^2}+K +\delta ^{2}.
\]
Thus, we have
\be
\frac{\pa}{\pa r}(r^{2} \Delta r)\leq n +(K+ \delta ^{2})r^{2}. \label{disLap2}
\ee
Integrating both sides of (\ref{disLap2}) from $0$ to $r$ along $\gamma (t)$, we get
\[
r^{2} \Delta r \leq nr + \frac{r^3}{3}(K+ \delta ^2 ),
\]
that is,
\be
\Delta r \leq \frac{n}{r}+\frac{r}{3}(K+ \delta ^2 ).  \label{disLap3}
\ee
For $0< r_{1} < r_{2} < R$, integrating both sides of (\ref{disLap3}) from $r_{1}$ to $r_{2}$ yields
\[
\int_{r_{1}}^{r_{2}} \Delta r dr \leq \ln \left(\frac{r_{2}}{r_{1}}\right)^{n}+ \frac{K+\delta ^{2}}{6}(r_{2}^{2}-r_{1}^{2}).
\]
This completes the proof of Theorem \ref{Laplacian}.
\end{proof}

As an application of Theorem \ref{Laplacian}, we can prove Theorem \ref{volume} in the following.

\vskip 2mm
{\it Proof of Theorem \ref{volume}.}  \ Let $\eta: [0, r] \rightarrow M$ be the minimizing geodesic from $\eta(0)=x_0$  to $\eta(r)=x $, where $r=d(x_0, x)$. By using the geodesic polar coordinates $(r, \theta)$ centered at $x_0$ for $x$ and by (\ref{Lapdis}), the Laplacian of the distance function $r$ satisfies
\[
\Delta r (x) =\frac{\partial}{\partial r} \ln \sigma(x_0, r, \theta).
\]
For $0 < r_{1} < r_{2} < R$, integrating this from $r_1$ to $r_2$ yields
\[
\int_{r_1}^{r_2}\frac{\partial}{\partial r} \ln \sigma(x_{0}, r, \theta)dr =\int_{r_1}^{r_2}\Delta r dr \leq \ln \left(\frac{r_{2}}{r_{1}}\right)^{n}+\frac{K+ \delta^2}{6}\left(r_{2}^{2}- r_{1}^{2}\right)
\]
by (\ref{Lap}).  Then we get
\[
\frac{\sigma (x_0, r_{2}, \theta)}{\sigma (x_0, r_{1}, \theta)}\leq \left(\frac{r_{2}}{r_{1}}\right)^{n}e^{\frac{K+\delta^{2}}{6}(r_{2}^{2}-r_{1}^{2})}.
\]
This is just (\ref{volcoe}).

Further, for any $0< r_{1} <r_{2}< R$ and from (\ref{volBall}),  we have
\beqn
\frac{m\left( B_{r_2}(x_0)\right)}{m\left( B_{r_1}(x_0)\right)}&=& \frac{\int_{0}^{r_2}\int_{{\cal D}_{x_0}(r)}\sigma (x_0, r, \theta)dr d\theta}{\int_{0}^{r_1}\int_{{\cal D}_{x_0}(r)}\sigma (x_0, r, \theta)dr d\theta} \\
&\leq& \frac{\frac{r_2}{r_1}\int_{0}^{r_1}\int_{{\cal D}_{x_0}(r)}\sigma (x_0, \frac{r_2}{r_1}r, \theta)dr d\theta}{\int_{0}^{r_1}\int_{{\cal D}_{x_0}(r)}\sigma (x_0, r, \theta)dr d\theta}.
\eeqn
By (\ref{volcoe}), for $0< r < r_{1}$, we have
\beqn
\sigma (x_0, \frac{r_2}{r_1}r, \theta) &\leq & \left(\frac{r_2}{r_1}\right)^{n}\sigma (x_0,r, \theta)e^{\frac{K+\delta^{2}}{6}\left(\left(\frac{r_2}{r_1}r\right)^{2}- r^{2}\right)} \\
 &\leq & \left(\frac{r_2}{r_1}\right)^{n}\sigma (x_0,r, \theta)e^{\frac{K+\delta^{2}}{6}\left({r_2}^{2}- r^{2}_{1}\right)},
\eeqn
from which we get
\[
\frac{m\left(B_{r_{2}}(x_0)\right)}{m\left(B_{r_{1}}(x_0)\right)}\leq \left(\frac{r_{2}}{r_{1}}\right)^{n+1}e^{\frac{K+\delta^{2}}{6}(r_{2}^{2}-r_{1}^{2})}.
\]
This is just (\ref{volcom}). Now we have completed  the proof of  Theorem \ref{volume}.  \qed

\vskip 2mm

\begin{rem} By (\ref{volcom}), we can get the following volume comparison
\be
\frac{m(B_{r_{2}}(x_0))}{m(B_{r_{1}}(x_0))}\leq \left(\frac{r_{2}}{r_{1}}\right)^{n+1}e^{\frac{K+\delta^{2}}{6}R^2}, \label{doubvol}
\ee
which implies the doubling volume property of $(M, F, m)$, that is, there is a uniform constant $D_{0}$ such that $m(B_{2r}(x_0))\leq D_{0} m(B_{r}(x_0))$ for any $x_0 \in M$ and $0< r < R/2$.
\end{rem}

\section{Weighted Poincar\'{e} inequality and Sobolev inequality}\label{WPI}

This section is devoted to prove a weighted Poincar\'{e} inequality from doubling volume property and weak local $p$-Poincar\'{e} inequality.
By Theorem \ref{volume} and (\ref{doubvol}), we can first prove the following weak local $p$-Poincar\'{e} inequality following closely the argument of Lemma 3.1 in \cite{CXia}.

\begin{lem}{\label{p-Poincare}}
Let $(M, F, m)$ be a forward complete Finsler measure space with finite reversibility $\Lambda$. Assume that ${\rm Ric}_{\infty}\geq -K$ for some $K\geq 0$. Then for $1\leq p<\infty$, there exist positive constants $c_{i}=c_{i}(p, n, \Lambda)(i=1,2)$ depending only on $p$, $n$ and the reversibility $\Lambda$ of $F$, such that
$$
\int_{B_{R}}\left|u-\bar{u}\right|^p dm \leq c_{1} e^{c_{2} (K+\delta^2)R^2} R^p \int_{B_{(\Lambda+2)R}} F^{*p}(du) dm,   \label{pi12}
$$
for $u \in W_{\mathrm{loc}}^{1,p}(M)$ and $B_{(\Lambda+2)R}\subset M$, where $\bar{u}:=\frac{\int_{B_{R}}u dm} {m(B_{R})}$.
\end{lem}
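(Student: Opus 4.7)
My plan is to follow the route of Lemma~3.1 in \cite{CXia}: reduce the mean oscillation to a double integral via Jensen's inequality, bound pointwise differences by line integrals of $F^*(du)$ along minimizing geodesics, and close the estimate with a Cheeger--Colding-type segment inequality whose constants are supplied by Theorem~\ref{volume}.

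I would first note that Jensen's inequality gives $|u(x) - \bar u|^p \leq m(B_R)^{-1}\int_{B_R}|u(x)-u(y)|^p\,dm(y)$, so integrating in $x$ produces
\[
\int_{B_R}|u - \bar u|^p\,dm \leq \frac{1}{m(B_R)}\int_{B_R \times B_R}|u(x) - u(y)|^p\,dm(x)\,dm(y).
\]
For each pair $x, y \in B_R$, fix a minimizing geodesic $\gamma_{xy}:[0,d(x,y)] \to M$ from $x$ to $y$. Using $|du(v)| \leq F^*(du)\,F(v)$ together with H\"older's inequality, I obtain
\[
|u(x) - u(y)|^p \leq d(x,y)^{p-1}\int_0^{d(x,y)} F^{*p}(du(\gamma_{xy}(t)))\,dt.
\]
The reversibility bound $d(x, x_0) \leq \Lambda\, d(x_0, x) \leq \Lambda R$ yields $d(x,y) \leq (\Lambda+1)R$, and for every $z = \gamma_{xy}(t)$ the triangle inequality gives $d(x_0, z) \leq d(x_0, x) + d(x, z) \leq R + (\Lambda+1)R = (\Lambda+2)R$; the whole geodesic therefore lies in $B_{(\Lambda+2)R}$, which accounts for the enlarged ball in the statement.

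The main analytic step would be a segment inequality of the form
\[
\int_{B_R \times B_R}\!\int_0^{d(x,y)} g(\gamma_{xy}(t))\,dt\,dm(x)\,dm(y) \leq C\,e^{c(K+\delta^2)R^2}\,R\,m(B_R)\int_{B_{(\Lambda+2)R}} g\,dm
\]
for nonnegative measurable $g$, applied with $g = F^{*p}(du)$. I would split each geodesic at its midpoint, fix one endpoint $x$, and reparametrize the other endpoint through the exponential map $\exp_x$ in geodesic polar coordinates $(r,\xi)$. The integrand becomes $\int_0^r g(\exp_x(t\xi))\,dt$ weighted by the polar volume coefficient $\sigma(x,r,\xi)$, and the Bishop--Gromov estimate (\ref{volcoe}) applied at the base point $x$ furnishes $\sigma(x,r,\xi)/\sigma(x,t,\xi) \leq (r/t)^n e^{\frac{K+\delta^2}{6}R^2}$. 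Integrating out $r$ for fixed $t$ converts the inner integral into an integral of $g$ over $B_{(\Lambda+2)R}$, while the accompanying volume factor is controlled by the doubling property (\ref{doubvol}).

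The principal obstacle will be the change-of-variables step: since the base point $x$ of the polar coordinates varies over $B_R$, the volume comparison has to be applied with a moving center rather than at the fixed $x_0$. The reversibility $\Lambda$ is what makes this uniform, since it lets one compare distances and balls centered at different points of $B_R$ and guarantees that all geodesic segments remain in the common ball $B_{(\Lambda+2)R}$. Combined with Theorem~\ref{volume}, this will produce a single uniform constant $c_1 = c_1(p, n, \Lambda)$ with the asserted exponential factor $e^{c_2(K+\delta^2)R^2}$.
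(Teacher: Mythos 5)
Your proposal is correct and follows essentially the same route as the paper, which establishes this lemma by invoking the argument of Lemma 3.1 in \cite{CXia}: the Jensen/double-integral reduction, integration of $F^{*}(du)$ along minimizing geodesics (which indeed stay in $B_{(\Lambda+2)R}$ by exactly your containment computation), and the midpoint-split change of variables in geodesic polar coordinates controlled by the volume comparison of Theorem \ref{volume}. The only small correction is that in the irreversible case one has $|du(v)|\le \Lambda F^{*}(du)F(v)$ rather than $F^{*}(du)F(v)$, so your pointwise estimate along the geodesic carries an extra factor $\Lambda^{p}$, which is harmlessly absorbed into $c_{1}(p,n,\Lambda)$.
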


In the following, we will introduce a weighted Poincar\'{e} inequality. Firstly, we give some necessary definitions. For  $\alpha\in(0,1]$, let $\xi:[0,\infty)\rightarrow[0,1]$ be a non-increasing function such that
\begin{itemize}
  \item $\inf\{t>0 \mid \xi(t)=0\}= 1$;
  \item $\forall$ $0<t< 1$, $\xi\left(t+\frac{1}{2}\min\left\{1-t, \frac{1}{2}\right\}\right)\geq \alpha ~\xi(t)$.
\end{itemize}
It is easy to check that, $\xi (\frac{1}{2})\leq \xi (t) \leq \frac{1}{\alpha}\xi (\frac{1}{4})$ when $0 \leq t \leq \frac{1}{2}$ and $\xi (1)\leq \xi (t) \leq \frac{1}{\alpha}\xi (\frac{3}{4})$ when $\frac{1}{2} \leq t \leq 1$. An interesting example of such functions $\xi$ is $\xi (t)=1$ on $[0, 1]$ and $\xi (t)=0$ otherwise. Further, let $\Psi(x):=\xi(d(x_0,x)/R)$ for $x \in B_{R}=B^{+}_{R}(x_{0})$ and $\Psi(x):=0$ for $x\in M\setminus B_R$. Then we have the following weighted Poincar\'{e} inequality.

\begin{thm}\label{weight-Poincare}
Let $(M, F, m)$ be an $n$-dimensional forward complete Finsler measure space with finite reversibility $\Lambda$. Assume that ${\rm Ric}_{\infty}\geq -K$ for some $K\geq 0$. Fix $1\leq p<\infty$ and $0<\alpha<1$. Then, there exist positive constants $d_{i}=d_{i}(p, n, \alpha, \Lambda)(i=1,2)$ depending only on $p$, $n$, $\alpha$ and the reversibility $\Lambda$ of $F$, such that
\be
\int_{B_{R}}\left|u-u_{\Psi}\right|^p \Psi dm \leq d_{1} e^{d_{2} (K+\delta^2)R^2} R^{p} \int_{B_{R}} F^{*p}(du) \Psi dm  \label{pi12}
\ee
for $u \in W_{\mathrm{loc}}^{1,p}(M)$ and a function $\Psi (x)$ as above, where  $u_{\Psi}:=\frac{\int_{B_{R}}u \Psi dm} {\int_{B_{R}} \Psi dm}$.
\end{thm}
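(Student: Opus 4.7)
The plan is to derive (\ref{pi12}) from the unweighted local $p$-Poincar\'e inequality of Lemma \ref{p-Poincare} together with the doubling volume property (\ref{doubvol}), using the classical Whitney-covering/chaining argument of Jerison and Saloff-Coste, adapted to the asymmetric Finsler setting. The core observation is that the weight $\Psi$ is essentially constant on balls of comparable size inside $B_R$: the defining property $\xi(t+\tfrac{1}{2}\min\{1-t,\tfrac{1}{2}\}) \geq \alpha\,\xi(t)$ iterated a bounded number of times yields $\sup_{B}\Psi \leq C(\alpha)\inf_{B}\Psi$ on any forward ball $B \subset B_R$ whose radius is a small fixed fraction of its distance to $\partial B_R$.

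First I would construct a countable Whitney-type family $\{B_i := B^{+}_{r_i}(x_i)\}_{i\in I}$ covering $B_R$, with $r_i = \eta\bigl(R - d(x_0,x_i)\bigr)$ for a small $\eta=\eta(\Lambda)>0$ chosen so that the enlargements $(\Lambda+2)B_i$ all lie inside $B_R$ and have uniformly bounded overlap multiplicity $N_0 = N_0(n,\Lambda)$; the existence of such a cover with a chain-connectedness property and radii comparable on intersecting balls follows from doubling (\ref{doubvol}), with the Finsler asymmetry controlled by $d(x,y)\leq \Lambda\, d(y,x)$. On each $B_i$, Lemma \ref{p-Poincare} combined with the near-constancy of $\Psi$ gives the local weighted estimate
\[
\int_{B_i}|u - u_{B_i}|^p\,\Psi\,dm \;\leq\; C(\alpha)\, c_1\, e^{c_2(K+\delta^2)r_i^2}\, r_i^p \int_{(\Lambda+2)B_i} F^{*p}(du)\,\Psi\,dm,
\]
where $u_{B_i}$ is the unweighted mean on $B_i$ and $r_i \leq R$ makes the exponential factor uniformly bounded by $e^{c_2(K+\delta^2)R^2}$.

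Next I would relate each local mean $u_{B_i}$ to the global weighted mean $u_{\Psi}$ by chaining. Fix a distinguished ``central'' Whitney ball $B_{\ast}$ near $x_0$. For each $i$ choose a chain $B_i = B_{j_0}, B_{j_1}, \dots, B_{j_{N_i}} = B_{\ast}$ of Whitney balls whose consecutive members satisfy $m(B_{j_{k-1}}\cap B_{j_k}) \geq c\,m(B_{j_k})$, with length $N_i \leq C\log(R/r_i)$. A telescoping sum and Jensen's inequality, together with the local estimate above applied on each link of the chain, produce
\[
|u_{B_i} - u_{B_{\ast}}|^p \int_{B_i}\Psi\,dm \;\leq\; C \sum_{k=0}^{N_i-1} r_{j_k}^p\, e^{c_2(K+\delta^2)R^2} \int_{(\Lambda+2)B_{j_k}} F^{*p}(du)\,\Psi\,dm,
\]
and a completely analogous estimate bounds $|u_{B_{\ast}} - u_{\Psi}|^p \int_{B_R}\Psi\,dm$. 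Summing over $i$, the bounded-overlap property of $\{(\Lambda+2)B_i\}$ collapses the right-hand side to $C'\,R^p\,e^{c_2(K+\delta^2)R^2}\int_{B_R} F^{*p}(du)\,\Psi\,dm$, yielding (\ref{pi12}) with constants $d_1, d_2$ depending only on $p, n, \alpha, \Lambda$.

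The principal difficulty is the bookkeeping of the reversibility constant $\Lambda$ through the covering and chaining: the right-hand side of Lemma \ref{p-Poincare} involves the enlarged ball $B_{(\Lambda+2)R}$, so the Whitney dilation factor must absorb $\Lambda+2$, and every ``forward ball containing another forward ball'' step in the chain risks amplification by $\Lambda$. This forces the constant $\eta$ and the overlap count $N_0$ to be chosen explicitly in terms of $\Lambda$, and all ball comparisons must be carried out with the non-symmetric distance rather than by invoking the Riemannian analogue as a black box. A secondary technical point is that $\Psi$ may vanish on part of $B_R$ (where $\xi = 0$), which is harmless because the integrals there vanish identically, but one must verify that $B_{\ast}$ can be chosen in the support of $\Psi$ and that the chains never exit that support in a way that would destroy the near-constancy estimate for $\Psi$.
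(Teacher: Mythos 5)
Your overall strategy is the one the paper itself follows: a Whitney-type covering of $B_R$ with radii proportional to the distance to $\partial B_R$, the local inequality of Lemma \ref{p-Poincare} combined with near-constancy of $\Psi$ on each Whitney ball, chaining back to a central ball, and finally replacing the central-ball mean by $u_\Psi$ at the cost of a fixed factor. However, there is a genuine gap at the decisive summation step. After the telescoping estimate for $|u_{B_i}-u_{B_\ast}|$ you assert that ``summing over $i$, the bounded-overlap property of $\{(\Lambda+2)B_i\}$ collapses the right-hand side.'' Bounded overlap of the dilated covering only controls the multiplicity with which the \emph{local} terms $\int_{B_i}|u-u_{B_i}|^p\Psi\,dm$ recount the gradient; it does nothing for the chaining term, because the balls appearing in the chain of $B_i$ are not near $B_i$: every chain from a small ball near $\partial B_R$ passes through the large balls near $x_0$, so a fixed ball $A$ of the cover occurs in the chains of unboundedly many $B_i$. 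The correct bookkeeping, which the paper carries out, rests on two ingredients you omit: (i) the geometric fact that $A\in\mathcal{F}(B)$ forces $r(B)\leq(\Lambda+2)r(A)$ and $B\subset\tilde{\Lambda}A$ (Lemma \ref{Poin-L2}), so the pairwise disjoint balls $B$ whose chains use $A$ all sit inside $\tilde{\Lambda}A$ and their total mass is controlled by doubling; and (ii) the $L^p$ lemma for dilated indicator functions (Lemma \ref{1B}), i.e.\ writing the chained bound as a pointwise inequality against $\sum_{A}a_A\mathbf{1}_{\tilde{\Lambda}A}\mathbf{1}_{B}$, summing over the disjoint $B\in\mathcal{F}$, and only then integrating. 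Without either ingredient the sum over $i$ does not close.

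A second, related defect is in the displayed chaining inequality itself: ``telescoping and Jensen'' converts $\bigl(\sum_{k}a_k\bigr)^p$ into $N_i^{p-1}\sum_k a_k^p$ with chain length $N_i\approx\log(R/r_i)$ unbounded, so the constant $C$ in that display cannot be independent of $i$ as written. This factor must be absorbed either by the indicator-function device just described (the route of Lemma \ref{Poin-L1} together with Lemma \ref{1B}) or by a weighted H\"older inequality along the chain exploiting the geometric growth of the radii $r_{j_k}$; your proposal supplies neither. The remaining steps --- the near-constancy of $\Psi$ on comparably sized balls deduced from $\xi\bigl(t+\tfrac{1}{2}\min\{1-t,\tfrac{1}{2}\}\bigr)\geq\alpha\,\xi(t)$, the $\Lambda$-bookkeeping for forward balls, and the final passage from the central-ball mean to $u_\Psi$ --- are consistent with the paper's argument and pose no problem.
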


Before giving the proof of Theorem \ref{weight-Poincare}, let us first review the Whitney-type coverings (see Section 5.3.3 in \cite{PS} for details). Here we only need to be careful of the non-reversibility of Finsler metrics.

Let $(M, F, m)$ be an $n$-dimensional forward complete Finsler measure space with finite reversibility $\Lambda$ and  $B_{R}:=B^+_{R}(x_0)$ be a ball of radius $R$ with center at  $x_0\in M$ and $B:=B_r^{+}(x)$ denote a ball of radius $r$ with center at $x\in B_R$. Assume that the volume doubling condition is satisfied for $R>0$. Define
$$
\bar{\mathcal{F}}:=\left\{B=B_r^{+}(x) \mid \text{the center} \ x \ \text{of the ball} \ B \ \text{ is in} \ B_{R} \ \text{and} \ r(B)=\frac{1}{(10\Lambda^2)^{3}}d(B, \partial B_{R})\right\},
$$
where $r(B)$ is the radius of the ball $B$. In particular, $10^3\Lambda^6B\subset B_R$. Here and from now on, we always denote the concentric ball $B_{\lambda r}^{+}(x)$ with radius $\lambda r$ by $\lambda B$ .

Let us start to construct a new collection $\mathcal{F}$ of balls by picking a ball $B_0 \in \bar{\mathcal{F}}$ with the largest possible radius. Then pick the next ball $B_1$ in $\bar{\mathcal{F}}$ which does not intersect $B_0$ and has maximal radius. Continuing this procedure and assuming that $k$ balls $B_0$, $B_1$, $\ldots$, $B_{k-1}$ have already been picked, pick the next ball $B_k$  in $\bar{\mathcal{F}}$ which does not intersect $\bigcup\limits_{i=0}^{k-1}B_i$ and has maximal radius. Through this procedure, we get a sequence of disjoint balls $\mathcal{F}=\{B_0, B_1, \ldots, B_k, \ldots\}$. Moreover, the collection ${\cal F}$  has the following properties:
\ben
  \item[{\rm (1)}]
  \be
  B_{R}=\bigcup\limits_{B\in \mathcal{F}}(\Lambda+1)B; \label{BRCover}
  \ee
  \item[{\rm (2)}] There exists a constant $K$ such that
  $$
  \sup\limits_{\eta \in B_R}\#\{B\in \mathcal{F} \mid \eta\in 200\Lambda^5B\}\leq K,
  $$
\een
where $\#S$ denotes the number of elements in the set $S$. $\cal{F}$ is called a Whitney-type covering of $B_{R}$.

By (\ref{BRCover}), there exists a ball $B_{x_0}\in \mathcal{F}$ such that the center $x_{0}$ of $B_{R}$ belongs to $(\Lambda+1)B_{x_{0}}$, that is, $x_0\in (\Lambda+1)B_{x_0}$. We call $B_{x_0}$ the central ball of $\mathcal{F}$. For any  ball $B\in \mathcal{F}$, let $\gamma_B$ be a minimizing geodesic from $x_0$ to the center $x_B$ of $B$ . Now, we choose a string of balls in ${\cal F}$ jointing $B_{x_{0}}$ to $B$ and still write it as $\mathcal{F}(B)=(B_0, B_1, \ldots, B_{l(B)-1})$  with $B_0 =B_{x_0}$, $B_{l(B)-1}=B$ and with the property that $\overline{(\Lambda+1)B_i}\cap\overline{(\Lambda+1)B_{i+1}}\neq \emptyset$, where $l(B)$ denotes the cardinality of ${\cal F}(B)$. Let us show how to choose this string of balls as follows. Let $y_0$ be the first point along $\gamma_B$ which does not belong to $(\Lambda+1)B_0$. Pick $B_1$ to be one of the balls in $\mathcal{F}$ such that $y_0\in(\Lambda+1)B_1$. Having constructed $B_0$, $B_1$, $\ldots$, $B_k$, let $y_k$ be the first point along $\gamma_B$ that does not belong to $\bigcup\limits_{i=0}^{k}(\Lambda+1)B_i$ and let $B_{k+1}$ be one of the balls in $\mathcal{F}$ such that $y_k\in (\Lambda+1)B_{k+1}$. When the last chosen ball is not $B$, we simply add $B$ as the last ball in $\mathcal{F}(B)$.

In the following, we give some necessary lemmas.

\begin{lem}{\label{Poin-L2}}
For any $B\in \mathcal{F}$,
$$
d(\gamma_B, \partial B_R)\geq \frac{1}{\Lambda+1}d(B, \partial B_R)=\frac{1}{\Lambda+1}10^3\Lambda^6 r(B).
$$
In particular, for any ball $A\in\mathcal{F}(B)$, $r(B)\leq (\Lambda+2)r(A)$ and $B\subset \tilde{\Lambda} A$, $\tilde{\Lambda}= 10^{3}\Lambda^{6}+\Lambda(\Lambda+3)+4$.
\end{lem}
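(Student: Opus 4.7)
My plan is to combine the forward triangle inequality, the reversibility bound $d(x,y) \leq \Lambda d(y,x)$, and the defining Whitney-type relation $d(A, \partial B_R) = 10^3 \Lambda^6 r(A)$ for balls in $\mathcal{F}$, carrying out the three assertions of the lemma in order.

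\emph{First inequality.} I will parametrize $\gamma_B$ by arclength, so that for $y = \gamma_B(s)$ with $s \in [0, D]$, $D := d(x_0, x_B)$, one has $d(x_0,y) = s$ and $d(y, x_B) = D - s$. Then I will produce two lower bounds on $d(y, \partial B_R)$: \textbf{(a)} from $R = d(x_0, z) \leq s + d(y, z)$ for every $z \in \partial B_R$, one gets $d(y, \partial B_R) \geq R - s$; and \textbf{(b)} from $d(x_B, z) \leq d(x_B, y) + d(y, z) \leq \Lambda(D - s) + d(y, z)$ together with the trivial $d(x_B, \partial B_R) \geq d(B, \partial B_R)$ (since $x_B \in B$), one gets $d(y, \partial B_R) \geq d(B, \partial B_R) - \Lambda(D - s)$. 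Multiplying (a) by $\Lambda$ and adding (b) will yield $(\Lambda + 1) d(y, \partial B_R) \geq \Lambda (R - D) + d(B, \partial B_R) \geq d(B, \partial B_R)$ (the last step using $R > D$, since $x_B \in B_R$), whence the first claim follows after substituting $d(B, \partial B_R) = 10^3 \Lambda^6 r(B)$.

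\emph{Radius comparison.} For any $A \in \mathcal{F}(B)$ the construction of $\mathcal{F}(B)$ furnishes a point $y \in \gamma_B \cap (\Lambda + 1) A$, so $d(x_A, y) \leq (\Lambda + 1) r(A)$. The first part applied to this $y$ gives the lower bound $d(y, \partial B_R) \geq \frac{10^3 \Lambda^6}{\Lambda + 1} r(B)$. For the matching upper bound I will use $d(y, z) \leq d(y, x_A) + d(x_A, z)$, the reversibility estimate $d(y, x_A) \leq \Lambda d(x_A, y) \leq \Lambda(\Lambda+1) r(A)$, and the bound $d(x_A, \partial B_R) \leq r(A) + d(A, \partial B_R) = (10^3 \Lambda^6 + 1) r(A)$ (derived from $d(x_A, z) \leq r(A) + d(p, z)$ for any $p \in A$, then taking the infimum over $z$ and $p$); combined these will give $d(y, \partial B_R) \leq (\Lambda^2 + \Lambda + 10^3 \Lambda^6 + 1) r(A)$. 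Comparing the two bounds produces $r(B) \leq \left[(\Lambda+1) + \frac{(\Lambda+1)(\Lambda^2 + \Lambda + 1)}{10^3 \Lambda^6}\right] r(A)$, and a direct check that the correction term is below $1$ for every $\Lambda \geq 1$ yields $r(B) \leq (\Lambda + 2) r(A)$.

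\emph{Containment.} For $q \in B$ I will decompose $d(x_A, q) \leq d(x_A, y) + d(y, x_B) + d(x_B, q)$. The outer terms are immediate: $d(x_A, y) \leq (\Lambda + 1) r(A)$ and $d(x_B, q) < r(B) \leq (\Lambda + 2) r(A)$, while bound (a) combined with the upper bound on $d(y, \partial B_R)$ from the previous step gives $d(y, x_B) = D - s \leq R - s \leq d(y, \partial B_R) \leq (\Lambda^2 + \Lambda + 10^3 \Lambda^6 + 1) r(A)$. Summing the three terms then yields $d(x_A, q) < (10^3 \Lambda^6 + \Lambda^2 + 3\Lambda + 4) r(A) = \tilde{\Lambda} r(A)$, i.e.\ $B \subset \tilde{\Lambda} A$. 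The main obstacle throughout is the asymmetry of the Finsler distance: reversibility must be inserted each time a triangle inequality is routed against the natural forward direction, and the factor $\frac{1}{\Lambda + 1}$ in the first assertion is precisely what arises from the weighted combination $\Lambda \cdot (\text{a}) + (\text{b})$ designed to absorb this asymmetry.
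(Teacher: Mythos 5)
Your proposal is correct and follows essentially the same route as the paper: the weighted combination $\Lambda\cdot(\text{a})+(\text{b})$ of the two triangle inequalities (with reversibility inserted on the backward leg) is exactly the paper's computation at the minimizing point of $\gamma_B$, and the radius comparison and containment use the same intersection point of $\gamma_B$ with $(\Lambda+1)A$, the same bound $d(x_A,\partial B_R)\leq (10^3\Lambda^6+1)r(A)$, and the same three-term decomposition with $d(y,x_B)\leq d(y,\partial B_R)$, yielding the identical constant $\tilde{\Lambda}$. No gaps noted.
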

\begin{proof} Choose $z \in \gamma_B$ such that $d(z, \partial B_R)=d(\gamma_B, \partial B_R)$. Then $d(x_0, z)+d(z, \partial B_R)\geq R$ and $d(x_B, z)+d(z, \partial B_R)\geq d(B, \partial B_R)$. By $d(x_0, z)+d(z, x_B)=d(x_0, x_B)$, we get
\beqn
\Lambda R+ (\Lambda+1)d(z, \partial B_R) &\geq& \Lambda d(x_0, x_B)+ (\Lambda+1)d(z, \partial B_R)\\ &=& \Lambda d(x_0, z)+\Lambda d(z, x_B) +(\Lambda+1)d(z, \partial B_R) \geq \Lambda R+d(B, \partial B_R).
\eeqn
So we have $d(\gamma_B, \partial B_R)\geq \frac{1}{\Lambda+1}d(B, \partial B_R)$.

Moreover, for any ball $A\in\mathcal{F}(B)$, by the construction of $\mathcal{F}(B)$, $(\Lambda+1)A\bigcap \gamma_B\neq \emptyset$. Hence we have
\beqn
\frac{10^3\Lambda^6}{\Lambda+1}r(B)\leq d(\gamma_B, \partial B_{R})&\leq& d(\gamma_B, x_{A})+d(x_{A},\partial B_{R}) \leq \Lambda d(x_{A}, \gamma_B)+r(A)+d(A, \partial B_R)\\
&\leq& \Lambda(\Lambda+1) r(A)+r(A)+d(A, \partial B_{R})\\
&=& (\Lambda^2+\Lambda+1)r(A)+10^3\Lambda^6 r(A),
\eeqn
from which, we have
\[
r(B)\leq  \left((\Lambda +1) +\frac{(\Lambda +1)(\Lambda^2+\Lambda+1)}{10^3\Lambda^6})\right)r(A) \leq (\Lambda+2)r(A).
\]

Next, we show that $B\subset \tilde{\Lambda}A$. For any $z\in B$, we have
$$
d(x_A, z)\leq d(x_A, x_B)+ d(x_B, z)\leq d(x_A, x_B)+r(B)\leq d(x_A, x_B)+ (\Lambda+2)r(A).
$$
Since $A\in \mathcal{F}(B)$, there exists a point $x_A'$ in $\gamma_B$ such that $d(x_A, x_A')\leq (\Lambda+1)r(A)$. Furthermore,
\beqn d(x_A', x_B)&=& d(x_0, x_B)-d(x_0, x_A')\leq R-d(x_0, x_A')\leq d(x_A', \partial B_R)\\
 &\leq & d(x_A', x_A)+d(x_A, \partial B_R)\leq \Lambda(\Lambda+1) r(A)+ r(A)+ d(A, \partial B_R)\\
 &=& \Lambda(\Lambda+1) r(A)+(10^3\Lambda^6+1)r(A).
\eeqn
Then
$$
d(x_A, z)\leq d(x_A, x_A')+d(x_A', x_B)+(\Lambda+2)r(A)\leq \tilde{\Lambda} r(A),
$$
where $\tilde{\Lambda}:=10^{3}\Lambda^{6} +\Lambda(\Lambda+3)+4$. Thus $B\subset \tilde{\Lambda} A$.
\end{proof}

\vskip 2mm

\begin{lem}{\label{Poin-L1}}
Under the same assumptions as in Theorem \ref{weight-Poincare}, there exist constant $\tilde{c}_{i}$ $(i= 1,2)$, such that for any $B\in \mathcal{F}$ and any consecutive balls $B_i$, $B_{i+1}$ in $\mathcal{F}(B)$, we have
\begin{equation*}
|\bar{u}_{6\Lambda^2 B_i}-\bar{u}_{6\Lambda^2 B_{i+1}}| \left(\frac{\Psi(B)}{m(B)}\right)^{\frac{1}{p}}\leq \tilde{c}_{1}e^{\tilde{c}_{2}(K+\delta^2)r^2(B_{i+j})} \frac{r(B_{i+j})}{m(B_{i+j})^{\frac{1}{p}}}\left(\int_{108\Lambda^5B_{i+j}}F^{*p}(du)\Psi dm\right)^{\frac{1}{p}}, \; j=0,1,
\end{equation*}
where $\Psi(B):=\int_{B}\Psi dm$ and $\bar{u}_{\Omega}:= \frac{\int_{\Omega}u dm}{m(\Omega)}$ for any domain $\Omega \subset M$.
\end{lem}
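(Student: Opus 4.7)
The plan is to bound the left-hand side by a chain of three standard estimates: first, reduce $|\bar u_{6\Lambda^2 B_i}-\bar u_{6\Lambda^2 B_{i+1}}|$ to an oscillation average on a ball containing both; second, apply the weak local $p$-Poincar\'e inequality of Lemma \ref{p-Poincare} on that ball; third, introduce the weight $\Psi$ and renormalize by $\Psi(B)/m(B)$ using the doubling property (\ref{doubvol}) and the quasi-monotonicity of $\xi$. This follows the Saloff-Coste template from \cite{PS}, with extra $\Lambda$-factors to absorb the non-reversibility of $F$.

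First I would observe that consecutive Whitney balls have $\Lambda$-comparable radii: both $r(B_i)$ and $r(B_{i+1})$ are fixed fractions of the distances of their centers to $\partial B_R$, and since $\overline{(\Lambda+1)B_i}\cap\overline{(\Lambda+1)B_{i+1}}\neq\emptyset$, those distances agree up to a factor depending only on $\Lambda$. Hence both $6\Lambda^2 B_i$ and $6\Lambda^2 B_{i+1}$ are contained in a common ball $U_j$ concentric with $B_{i+j}$ of radius $\rho\, r(B_{i+j})$ for some universal $\rho=\rho(\Lambda)$ arranged so that $(\Lambda+2)\rho\leq 108\Lambda^5$. The elementary averaging inequality then gives
\begin{equation*}
|\bar u_{6\Lambda^2 B_i}-\bar u_{6\Lambda^2 B_{i+1}}|^p \leq \frac{2^{p-1}\,m(U_j)^{p-1}}{\min\{m(6\Lambda^2 B_i),m(6\Lambda^2 B_{i+1})\}^{p-1}}\cdot\frac{1}{m(U_j)}\int_{U_j}|u-\bar u_{U_j}|^p\,dm,
\end{equation*}
and the doubling estimate (\ref{doubvol}) absorbs the volume prefactor into $C\,e^{c(K+\delta^2)r^2(B_{i+j})}$.

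Next I would apply Lemma \ref{p-Poincare} on $U_j$, noting that $(\Lambda+2)U_j\subset 108\Lambda^5 B_{i+j}$ by construction, which turns the oscillation integral into $c_1 e^{c_2(K+\delta^2)r^2(B_{i+j})}\,r(B_{i+j})^p\int_{108\Lambda^5 B_{i+j}}F^{*p}(du)\,dm$. To insert the weight, I would exploit the near-monotonicity of $\xi$: its second defining property implies that $\xi$ can decrease by at most a factor $\alpha$ per step, where a step is an increment of the radial argument by a fixed fraction of the remaining distance to $1$. Since $B\subset\tilde\Lambda B_{i+j}$ by Lemma \ref{Poin-L2} and $108\Lambda^5 B_{i+j}\subset 10^3\Lambda^6 B_{i+j}\subset B_R$, only a uniformly bounded number of such steps separates the radial coordinates of points in $B$ from those in $108\Lambda^5 B_{i+j}$. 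Thus $\Psi$ is comparable on $B\cup 108\Lambda^5 B_{i+j}$ with constants depending only on $\alpha,\Lambda$, allowing replacement of the unweighted integral by the $\Psi$-weighted one and comparison of $\Psi(B)/m(B)$ with $\Psi(108\Lambda^5 B_{i+j})/m(108\Lambda^5 B_{i+j})$ via doubling.

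Assembling the three estimates and taking $p$-th roots yields the claimed inequality with $\tilde c_1,\tilde c_2$ depending only on $p,n,\alpha,\Lambda$. The main obstacle is bookkeeping: one must ensure that the comparisons between $B$ and the possibly far-off ball $B_{i+j}$ do not accumulate a factor depending on $l(B)$ nor produce an exponential involving a radius larger than $r(B_{i+j})$. The former is handled by the uniform per-step decay $\alpha$ of $\xi$ together with applying volume comparison across the single inclusion $B\subset\tilde\Lambda B_{i+j}$ rather than iteratively along the chain $\mathcal{F}(B)$; the latter is ensured because Lemma \ref{p-Poincare} and doubling are invoked only on balls of radius comparable to $r(B_{i+j})$, so the exponential factors collapse into one of the form $e^{\tilde c_2(K+\delta^2)r^2(B_{i+j})}$.
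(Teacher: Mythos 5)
Your strategy is essentially the paper's: reduce the difference of averages to an oscillation integral over a ball of radius comparable to $r(B_{i+j})$, apply the unweighted Poincar\'e inequality of Lemma \ref{p-Poincare} there, and then insert the weight using the behaviour of $\xi$ together with Lemma \ref{Poin-L2} and doubling. (The paper works with the intersection $6\Lambda^2 B_i\cap 6\Lambda^2 B_{i+1}$ and applies Lemma \ref{p-Poincare} on each of $6\Lambda^2 B_i$, $6\Lambda^2 B_{i+1}$ separately rather than on a single containing ball $U_j$; this is an immaterial variant.) However, one step of your argument is false as stated. The claim that ``$\Psi$ is comparable on $B\cup 108\Lambda^5 B_{i+j}$ with constants depending only on $\alpha,\Lambda$'' cannot hold: take $B_{i+j}$ to be the central ball $B_{x_0}$ and $B$ a Whitney ball very close to $\partial B_R$, with $\xi(t)=(1-t)^N$ (admissible with $\alpha=2^{-N}$); then $\Psi\approx 1$ near $x_0$ while $\Psi$ on $B$ is arbitrarily small, and the number of $\alpha$-steps separating the radial coordinates of $B$ and of $108\Lambda^5 B_{i+j}$ is of order $\log\bigl(R/d(B,\partial B_R)\bigr)$, which is not uniformly bounded even though $B\subset\tilde\Lambda B_{i+j}$. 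What is true, and is all the argument needs (since the weighted integral only has to dominate $\frac{\Psi(B)}{m(B)}$ times the unweighted one), is the one-sided bound $\sup_{B}\Psi\le C(\alpha,\Lambda)\,\inf_{108\Lambda^5 B_{i+j}}\Psi$: by Lemma \ref{Poin-L2} (via $r(B)\le(\Lambda+2)r(B_{i+j})$) every point of $108\Lambda^5 B_{i+j}$ retains a remaining distance to $\partial B_R$ at least a fixed $\Lambda$-fraction of that of points of $B$, so monotonicity of $\xi$ plus a bounded number of $\alpha$-steps in the outward direction suffices. This is exactly the one-sided form the paper uses ($\max_{x\in B}\Psi(x)\le c\min_{x\in B_i}\Psi(x)$, combined with $\Psi$ being essentially constant on $108\Lambda^5 B_{i+j}$); you should restate and prove your comparison in that form.

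A second, minor, inaccuracy: your displayed ``elementary averaging inequality'' has the wrong constant. For $p=1$ it asserts $|\bar{u}_{6\Lambda^2 B_i}-\bar{u}_{6\Lambda^2 B_{i+1}}|\le m(U_j)^{-1}\int_{U_j}|u-\bar{u}_{U_j}|\,dm$, which fails in general (e.g.\ $u(x)=x$ on $U_j=[0,1]$ with the two sets taken near the two endpoints). A correct version, by Jensen, is $|\bar{u}_{6\Lambda^2 B_i}-\bar{u}_{6\Lambda^2 B_{i+1}}|^p\le 2^p\min\{m(6\Lambda^2 B_i),m(6\Lambda^2 B_{i+1})\}^{-1}\int_{U_j}|u-\bar{u}_{U_j}|^p\,dm$, and since $m(U_j)$ is comparable to these measures up to a factor $Ce^{c(K+\delta^2)r^2(B_{i+j})}$ by (\ref{doubvol}), this only changes $\tilde{c}_1,\tilde{c}_2$; with these two repairs your proof goes through along the same lines as the paper's.
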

\begin{proof} By the triangle inequality in $L^p$ space, we have
\beqn
&&m(6\Lambda^2 B_i\cap 6\Lambda^2 B_{i+1})^{\frac{1}{p}}|\bar{u}_{6\Lambda^2 B_i}-\bar{u}_{6\Lambda^2B_{i+1}}|=\left(\int_{6\Lambda^2 B_i\cap 6\Lambda^2 B_{i+1}}|\bar{u}_{6\Lambda^2 B_i}-\bar{u}_{6\Lambda^2 B_{i+1}}|^{p}dm\right)^{\frac{1}{p}}\\
&&\leq \left(\int_{6\Lambda^2 B_i}|u-\bar{u}_{6\Lambda^2 B_{i}}|^{p}dm\right)^{\frac{1}{p}} + \left(\int_{6\Lambda^2 B_{i+1}}|u-\bar{u}_{6\Lambda^2 B_{i+1}}|^{p}dm\right)^{\frac{1}{p}}\\
&&\leq c_3e^{c_4(K+\delta^2)r^2(B_i)}r(B_i)\left(\int_{18\Lambda^3 B_i}F^{*p}(du)dm\right)^{\frac{1}{p}} + c_3e^{c_4(K+\delta^2)r^2(B_{i+1})}r(B_{i+1})\left(\int_{18\Lambda^3 B_{i+1}}F^{*p}(du)dm\right)^{\frac{1}{p}},
\eeqn
where we have used Lemma \ref{p-Poincare} in the last inequality.

Based on the properties of balls $B_i$ and $B_{i+1}$, by the triangle inequality we have
\beqn
 10^3\Lambda^6 r(B_i)&=& d(B_i, \partial B_{R})\leq d(B_i, B_{i+1})+d(B_{i+1}, \partial B_{R}) \\
&\leq & (\Lambda+1)r(B_i)+\Lambda (\Lambda+1)r(B_{i+1})+ 10^3 \Lambda^6 r(B_{i+1}),
\eeqn
namely,
$$
r(B_{i})\leq \frac{10^3\Lambda^6+\Lambda(\Lambda+1)}{10^3\Lambda^6-(\Lambda+1)}r(B_{i+1})\leq \left(1+\frac{1}{100\Lambda^2}\right)r(B_{i+1}).
$$
By a similar argument, we have $r(B_{i+1})\leq \left(1+\frac{1}{100\Lambda^2}\right)r(B_{i})$. So we get
$$
(1+(10\Lambda)^{-2})^{-1}r(B_{i}) \leq r(B_{i+1}) \leq (1+(10\Lambda)^{-2})r(B_{i}).
$$
Moreover, by the triangle inequality we know that $B_{i+1}\subset 6\Lambda^2 B_{i}$ and $B_{i}\subset 6\Lambda^2 B_{i+1}$. Thus
$$\max \left\{m(B_{i}), m(B_{i+1})\right\}\leq m\left(6\Lambda^2 B_{i}\cap 6\Lambda^2 B_{i+1}\right).$$

In sum, it is easy to see that
\beqn
& &m(B_i)^{\frac{1}{p}}|\bar{u}_{6\Lambda^2 B_i}-\bar{u}_{6\Lambda^2 B_{i+1}}|\\
&& \leq c_3e^{c_4(K+\delta^2)r^2(B_{i})} r(B_{i})\left(\int_{18\Lambda^3B_{i}}F^{*p}(du)dm\right)^{\frac{1}{p}}
+c_5e^{c_6(K+\delta^2)r^2(B_{i})} r(B_{i})\left(\int_{108\Lambda^5B_{i}}F^{*p}(du)dm\right)^{\frac{1}{p}}\\
& &\leq c_7e^{c_8(K+\delta^2)r^2(B_{i})} r(B_{i})\left(\int_{108\Lambda^5B_{i}}F^{*p}(du)dm\right)^{\frac{1}{p}}.
\eeqn
Then
\be
|\bar{u}_{6\Lambda^2 B_i}-\bar{u}_{6\Lambda^2 B_{i+1}}|\leq c_7e^{c_8(K+\delta^2)r^2(B_{i+j})} \frac{r(B_{i+j})}{m(B_{i+j})^{\frac{1}{p}}}\left(\int_{108\Lambda^5B_{i+j}}F^{*p}(du)dm\right)^{\frac{1}{p}}, \; j=0,1. \label{L5..3.9}
\ee

By the definition of $\Psi$, the values of $\Psi$ is essentially constant on the balls $108\Lambda^5 B$ for $B\in \mathcal{F}$. Hence, from (\ref{L5..3.9}), we have
$$
|\bar{u}_{6\Lambda^2 B_i}-\bar{u}_{6\Lambda^2 B_{i+1}}|\leq c_9e^{c_8(K+\delta^2)r^2(B_{i+j})} \frac{r(B_{i+j})}{\Psi(B_{i+j})^{\frac{1}{p}}}\left(\int_{108\Lambda^5B_{i+j}}F^{*p}(du)\Psi dm\right)^{\frac{1}{p}}, \; j=0,1.\label{Poin-L1-e2}
$$
By Lemma \ref{Poin-L2}, we can see that $(\Lambda+2)d(B_{i}, \partial B_R)\geq d(B, \partial B_R)$ for every $B_{i} \in \mathcal{F}(B)$. Further, by the properties of $\Psi$, there exists a positive constant c such that $\max\limits_{x\in B}\Psi(x) \leq c\min\limits_{x\in B_i}\Psi(x)$. Then
$$
\frac{1}{m(B)}\int_{B}\Psi dm \leq  \frac{c}{m(B_i)}\int_{B_i}\Psi dm \;\; {\rm for \;every} \; B_i\in \mathcal{F}(B).
$$
So we obtain
\beqn
&& |\bar{u}_{6\Lambda^2 B_i}-\bar{u}_{6\Lambda^2 B_{i+1}}| \left(\frac{\Psi(B)}{m(B)}\right)^{\frac{1}{p}} \leq c^{\frac{1}{p}}|\bar{u}_{6\Lambda^2 B_i}-\bar{u}_{6\Lambda^2 B_{i+1}}| \left(\frac{\Psi(B_{i+j})}{m(B_{i+j})}\right)^{\frac{1}{p}}\\
&& \leq  c_{10}e^{c_{8}(K+\delta^2)r^2(B_{i+j})} \frac{r(B_{i+j})}{m(B_{i+j})^{\frac{1}{p}}}\left(\int_{108\Lambda^5B_{i+j}}F^{*p}(du)\Psi dm\right)^{\frac{1}{p}}, \; j=0,1
\eeqn
as desired.
\end{proof}
\vskip 2mm

In order to prove Theorem \ref{weight-Poincare}, we also need the the following lemma, which can be obtained by following the proof of Lemma 5.3.12 in \cite{PS}.

\begin{lem}\label{1B}
Let $(M, F, m)$ be a forward complete Finsler measure space with finite reversibility $\Lambda$. Assume that the volume doubling condition is satisfied for a fixed $R>0$.  Further, fix $\kappa\geq 1$ and $1\leq p<\infty$. Then there exists constants $d_{i}=d_i(\kappa, p, \Lambda, n)(i=3, 4)$, such that for any sequence $(B_i)_ {i=1}^{\infty}$ of balls of radius at most $R$, and any sequence of non-negative numbers $(a_i)_{i=1}^{\infty}$,
$$
\|\sum\limits_ia_i\mathbf{1}_{\kappa B_i}\|_{L^p}\leq d_3 e^{d_4(K+\delta^2)R^2} \|\sum\limits_ia_i\mathbf{1}_{B_i}\|_{L^p}.
$$
Here, $\mathbf{1}_A$ denotes the characteristic function of set $A \subset M$.
\end{lem}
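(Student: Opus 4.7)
The plan is to adapt the proof of Lemma 5.3.12 in Saloff-Coste's book to the Finsler setting, tracking non-reversibility via systematic factors of $\Lambda$ and using the doubling property from Theorem \ref{volume}. Set $f:=\sum_i a_i\mathbf{1}_{B_i}$ and $g:=\sum_i a_i\mathbf{1}_{\kappa B_i}$; the goal is $\|g\|_{L^p}\leq d_3 e^{d_4(K+\delta^2)R^2}\|f\|_{L^p}$. The case $p=1$ is direct: applying Theorem \ref{volume} at the (arbitrary) center $x_i$ of $B_i$ with radii $r(B_i)\leq R$ and $\kappa r(B_i)$ gives $m(\kappa B_i)\leq \kappa^{n+1}e^{c(\kappa)(K+\delta^2)R^2}m(B_i)$, so $\int g\,dm=\sum_i a_i m(\kappa B_i)\leq \kappa^{n+1}e^{c(\kappa)(K+\delta^2)R^2}\int f\,dm$.

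For $1<p<\infty$, I would argue by duality. Let $h\geq 0$ with $\|h\|_{L^{p'}}=1$. For any $x\in B_i$, the asymmetric triangle inequality yields $d(x,x_i)\leq \Lambda d(x_i,x)<\Lambda r(B_i)$, whence $\kappa B_i\subset B^{+}_{(\kappa+\Lambda)r(B_i)}(x)$. Combined with doubling centered at $x$ applied to these radii, this gives the pointwise bound
\begin{equation*}
\int_{\kappa B_i} h\,dm \;\leq\; m\!\left(B^{+}_{(\kappa+\Lambda)r(B_i)}(x)\right)\mathcal{M}h(x) \;\leq\; C_1 e^{c(K+\delta^2)R^2}\,m(B_i)\,\mathcal{M}h(x),
\end{equation*}
where $\mathcal{M}$ denotes the forward centered Hardy--Littlewood maximal operator on $(M,F,m)$. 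Averaging in $x$ over $B_i$ against $dm/m(B_i)$, multiplying by $a_i$, summing in $i$, and using H\"older's inequality yields
\begin{equation*}
\int g\,h\,dm \;\leq\; C_1 e^{c(K+\delta^2)R^2}\int f\cdot \mathcal{M}h\,dm \;\leq\; C_1 e^{c(K+\delta^2)R^2}\|f\|_{L^p}\|\mathcal{M}h\|_{L^{p'}}.
\end{equation*}
Invoking the $L^{p'}$ boundedness $\|\mathcal{M}h\|_{L^{p'}}\leq C(p,\Lambda,n)\|h\|_{L^{p'}}$ and taking the supremum over admissible $h$ then delivers the stated inequality.

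The main technical obstacle is establishing the $L^{p'}$ boundedness of the forward maximal operator $\mathcal{M}$ on the asymmetric metric measure space $(M,d,m)$. The standard route is a Vitali-type covering lemma for forward balls combined with the Calder\'on--Zygmund decomposition; the argument goes through once each application of the symmetric triangle inequality is replaced by $d(x,y)\leq \Lambda d(y,x)$, producing only controlled $\Lambda$-dependent multiplicative losses. This yields the weak $(1,1)$ estimate, and Marcinkiewicz interpolation with the trivial $L^{\infty}$ bound provides the strong $(p',p')$ estimate for all $p'>1$, with constants depending only on $\kappa,p,\Lambda,n$ and the doubling constant $e^{c(K+\delta^2)R^2}$ supplied by Theorem \ref{volume}. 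Absorbing these constants into $d_3$ and $d_4$ yields the assertion of the lemma.
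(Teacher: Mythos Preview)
Your argument is correct and is precisely the route the paper intends: the paper does not supply its own proof but defers to Lemma~5.3.12 in \cite{PS}, whose proof is exactly the duality/maximal-function argument you outline, with the Finsler adaptation amounting to the $\Lambda$-factors you insert. One small slip: the inequality $m\bigl(B^{+}_{(\kappa+\Lambda)r(B_i)}(x)\bigr)\leq C_1 e^{c(K+\delta^2)R^2}m(B_i)$ does not follow from doubling \emph{centered at $x$}, since a small forward ball at $x$ need not sit inside $B_i$; instead use the containment $B^{+}_{(\kappa+\Lambda)r(B_i)}(x)\subset B^{+}_{(\kappa+\Lambda+1)r(B_i)}(x_i)$ (from $d(x_i,x)<r(B_i)$ and the triangle inequality) and then apply doubling centered at $x_i$.
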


Based on the above disscussions, we are now in the position to give the proof of Theorem \ref{weight-Poincare}.

\begin{proof}[Proof of Theorem \ref{weight-Poincare}]
Note that
\beqn
\int_{B_{R}}|u-\bar{u}_{6\Lambda^2 B_{x_0}}|^{p}\Psi dm &\leq&\sum\limits_{B\in \mathcal{F}}\int_{(\Lambda+1)B}|u-\bar{u}_{6\Lambda^2 B_{x_0}}|^p\Psi dm\\
&\leq & 2^{p-1} \sum\limits_{B\in \mathcal{F}}\int_{6\Lambda^2 B}\left(|u-\bar{u}_{6\Lambda^2 B}|^p+|\bar{u}_{6\Lambda^2 B}-\bar{u}_{6\Lambda^2 B_{x_0}}|^p\right)\Psi dm\\
& = & 2^{p-1} \sum\limits_{B\in \mathcal{F}}\int_{6\Lambda^2 B}|u-\bar{u}_{6\Lambda^2 B}|^p \Psi dm+ 2^{p-1}\sum\limits_{B\in \mathcal{F}} |\bar{u}_{6\Lambda^2 B}-\bar{u}_{6\Lambda^2 B_{x_0}}|^p \int_{6\Lambda^2 B}\Psi dm\\
&=:& I+II,
\eeqn
where we have used the inequality $(a+b)^p\leq 2^{p-1}(a^p+b^p)$ for $a$, $b\geq 0$ in the second line. By the fact that the values of $\Psi$ is essentially constant on the balls $18 \Lambda^3 B$ for $B\in \mathcal{F}$ and by Lemma \ref{p-Poincare}, we have
$$
\int_{6\Lambda^2 B}|u-\bar{u}_{6\Lambda^2 B}|^p \Psi dm\leq c_1' e^{c_{2}(K+\delta^2)r^2(6\Lambda^2 B)} r(6\Lambda^2 B)^p\int_{18\Lambda^3 B}F^{*p}(du) \Psi dm.
$$
Hence, because of $10^3\Lambda^6B\subset B_R$, we have
$$
I\leq 2^{p-1} c_{1}'\sum\limits_{B\in \mathcal{F}} e^{c_{2}(K+\delta^2)r^2(6\Lambda^2 B)} r(6\Lambda^2 B)^p\int_{18\Lambda^3 B}F^{*p}(du) \Psi dm \leq C_1 e^{C_{2}(K+\delta^2)R^2}R^p \int_{B_{R}}F^{*p}(du) \Psi dm.
$$
For the term $II$, by Theorem \ref{volume}, we have
$$
II \leq C_3 e^{C_{4}(K+\delta^2)R^2}\sum\limits_{B\in \mathcal{F}}\int_{B_{R}}|\bar{u}_{6\Lambda^2 B}-\bar{u}_{6\Lambda^2 B_{x_0}}|^p\frac{\Psi(B) }{m(B)}\mathbf{1}_{B}dm.
$$
Recall that $\mathcal{F}(B)=(B_0, B_1, \ldots, B_{l(B)-1})$ with $B_{x_0}=B_0$, $B=B_{l(B)-1}$, we have by Lemma \ref{Poin-L1}
\beqn
|\bar{u}_{6\Lambda^2 B}-\bar{u}_{6\Lambda^2 B_{x_0}}|\left(\frac{\Psi(B)}{m(B)}\right)^{\frac{1}{p}} &\leq & \sum\limits^{l(B)-2}_{i=0}|\bar{u}_{6\Lambda^2 B_i}-\bar{u}_{6\Lambda^2 B_{i+1}}| \left(\frac{\Psi(B)}{m(B)}\right)^{\frac{1}{p}} \\
&\leq & \tilde{c}_1 e^{\tilde{c}_2(K+\delta^2)R^2}\sum\limits^{l(B)-1}_{i=0} \frac{r(B_{i})}{m(B_{i})^{\frac{1}{p}}}\left(\int_{108\Lambda^5B_{i}}F^{*p}(du)\Psi dm\right)^{\frac{1}{p}}.
\eeqn
By Lemma \ref{Poin-L2}, the ball $B$ is contained in $\tilde{\Lambda} B_i$ for any $B_i\in \mathcal{F}(B)$. Then
\beqn
|\bar{u}_{6\Lambda^2 B}-\bar{u}_{6\Lambda^2 B_{x_0}}|\left(\frac{\Psi(B)}{m(B)}\right)^{\frac{1}{p}} \mathbf{1}_{B}
&\leq&  \tilde{c}_1 e^{\tilde{c}_2(K+\delta^2)R^2}\sum\limits^{l(B)-1}_{i=0} \frac{r(B_{i})}{m(B_{i})^{\frac{1}{p}}}\left(\int_{108\Lambda^5B_{i}}F^{*p}(du)\Psi dm\right)^{\frac{1}{p}} \mathbf{1}_{\tilde{\Lambda}B_i} \mathbf{1}_{B}\\
&\leq& \tilde{c}_1 e^{\tilde{c}_2(K+\delta^2)R^2}\sum\limits_{A\in \mathcal{F}} \frac{r(A)}{m(A)^{\frac{1}{p}}}\left(\int_{108\Lambda^5A}F^{*p}(du)\Psi dm\right)^{\frac{1}{p}} \mathbf{1}_{\tilde{\Lambda}A} \mathbf{1}_{B}.
\eeqn
Since the balls in $\mathcal{F}$ are disjoint, $\sum\limits_{B\in \mathcal{F}} \mathbf{1}_{B}\leq 1$. Summing both sides of above inequality over $B\in \mathcal{F}$, we can get
$$
\sum\limits_{B\in \mathcal{F}}|\bar{u}_{6\Lambda^2 B}-\bar{u}_{6\Lambda^2 B_{x_0}}|^p\frac{\Psi(B)}{m(B)} \mathbf{1}_{B}
\leq \tilde{c}_3 e^{\tilde{c}_4(K+\delta^2)R^2}\left|\sum\limits_{A\in \mathcal{F}} \frac{r(A)}{m(A)^{\frac{1}{p}}}\left(\int_{108\Lambda^5A}F^{*p}(du)\Psi dm\right)^{\frac{1}{p}} \mathbf{1}_{\tilde{\Lambda}A}\right|^p.
$$
Then, by Lemma \ref{1B} and H\"{o}lder inequality, integrating on both sides of above inequality over $B_R$ yields

\beqn
&& \int_{B_{R}}\sum\limits_{B\in \mathcal{F}}|\bar{u}_{6\Lambda^{2} B}-\bar{u}_{6\Lambda^{2} B_{x_0}}|^p\frac{\Psi(B)}{m(B)} \mathbf{1}_{B}dm \\
&& \leq \tilde{c}_5 e^{\tilde{c}_6(K+\delta^2)R^2} \int_{B_{R}}\left|\sum\limits_{A\in \mathcal{F}} \frac{r(A)}{m(A)^{\frac{1}{p}}}\left(\int_{108\Lambda^5A}F^{*p}(du)\Psi dm\right)^{\frac{1}{p}} \mathbf{1}_{A}\right|^p dm\\
&& \leq  \tilde{c}_7 e^{\tilde{c}_8(K+\delta^2)R^2}\int_{B_{R}} \sum\limits_{A\in \mathcal{F}} \frac{r(A)^p}{m(A)}\left(\int_{108\Lambda^5A}F^{*p}(du)\Psi dm\right) \mathbf{1}_{A} dm\\
&&\leq \tilde{c}_7 e^{\tilde{c}_8(K+\delta^2)R^2} R^{p}\sum\limits_{A\in \mathcal{F}}\int_{108\Lambda^5A}F^{*p}(du)\Psi dm\\
&&\leq \tilde{c}_9 e^{\tilde{c}_8(K+\delta^2)R^2} R^{p}\int_{B_{R}}F^{*p}(du)\Psi dm.
\eeqn
Thus we have the following
$$
II\leq C_5 e^{C_6(K+\delta^2)R^2} R^{p}\int_{B_{R}}F^{*p}(du)\Psi dm.
$$

Notice that
\[
\int_{B_{R}} |u-u_{\Psi}|^{p} \Psi dm \leq 2^{p-1} \int_{B_{R}} |u-\bar{u}_{6\Lambda^2 B_{x_0}}|^p \Psi dm+ 2^{p-1} \int_{B_{R}} |u_{\Psi}- \bar{u}_{6\Lambda^2 B_{x_0}}|^p \Psi dm
\]
and
\beqn
&& \int_{B_{R}} |u_{\Psi}- \bar{u}_{6\Lambda^2 B_{x_0}}|^p \Psi dm =\frac{1}{\left(\int_{B_R}\Psi dm\right)^p} \int_{B_R}\left| \int_{B_{R}}u\Psi dm -\int_{B_R}\bar{u}_{6\Lambda^2 B_{x_0}}\Psi dm\right|^{p}\Psi dm \\
&& \leq \frac{1}{\left(\int_{B_R}\Psi dm\right)^p}\int_{B_R}\left[\left(\int_{B_R}\left| u-\bar{u}_{6\Lambda^2 B_{x_0}}\right|^{p}\Psi dm\right)\cdot \left(\int_{B_R}\Psi dm\right)^{p-1} \right]\Psi dm \\
&& = \int_{B_R}\left|u- \bar{u}_{6\Lambda^2 B_{x_0}}\right|^{p}\Psi dm,
\eeqn
where we have used H\"{o}lder inequality $(\int_{B_R}|fg|dm)^{p}\leq (\int_{B_R} |f|^p |g|\,dm) (\int_{B_R} |g|\,dm)^{p-1}$ in the second line. Thus we have
\beqn
\int_{B_{R}} |u-u_{\Psi}|^{p} \Psi dm &\leq& 2^{p} \int_{B_{R}} |u-\bar{u}_{6\Lambda^2 B_{x_0}}|^p \Psi dm\\
&\leq& C_7 e^{C_8(K+\delta^2)R^2} R^{p}\int_{B_{R}}F^{*p}(du)\Psi dm.
\eeqn
This completes the proof.
\end{proof}

\vskip 2mm
From Theorem \ref{weight-Poincare}, we immediately get the following $p$-Poincar\'{e} inequality.

\begin{cor}
Let $(M, F, m)$ be an $n$-dimensional forward complete Finsler measure space with finite reversibility $\Lambda$. Assume that ${\rm Ric}_{\infty}\geq -K$ for some $K\geq 0$. Fix $1\leq p<\infty$, then there exist positive constants $d_{i}=d_{i}(p, n, \Lambda)(i=1,2)$ depending only on $p$, $n$ and the reversibility $\Lambda$ of $F$, such that
\begin{equation*}
\int_{B_{R}}\left|u-\bar{u}\right|^p dm \leq d_1 e^{d_2 (K+\delta^2)R^2} R^p \int_{B_{R}} F^{*p}(du) dm  \label{pi12}
\end{equation*}
for $u \in W_{\mathrm{loc}}^{1,p}(M)$, where $\bar{u}:=\frac{\int_{B_{R}}u dm} {m(B_{R})}$.
\end{cor}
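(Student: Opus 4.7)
The plan is to obtain the corollary as an immediate specialization of Theorem \ref{weight-Poincare}. Note that Lemma \ref{p-Poincare} already gives a Poincar\'{e}-type inequality, but the right-hand side there involves an integral over the enlarged ball $B_{(\Lambda+2)R}$ rather than $B_R$; the whole point of the weighted version in Theorem \ref{weight-Poincare} (obtained via the Whitney-type covering argument) was to bring the gradient integral back onto the same ball. So the corollary should follow from Theorem \ref{weight-Poincare} by a suitable choice of weight $\Psi$ that is identically $1$ on $B_R$.

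First I would take the explicit example of $\xi$ already flagged in the paragraph before Theorem \ref{weight-Poincare}: namely, $\xi(t)=1$ for $t\in[0,1]$ and $\xi(t)=0$ for $t>1$. I would briefly verify that this $\xi$ meets the two conditions required of the family of weights: it is non-increasing with $\inf\{t>0\mid\xi(t)=0\}=1$, and for any $0<t<1$ the shifted argument $t+\tfrac{1}{2}\min\{1-t,\tfrac{1}{2}\}$ still lies in $[0,1)$, so $\xi$ at that point equals $1$; hence the multiplicative condition $\xi(t+\tfrac{1}{2}\min\{1-t,\tfrac{1}{2}\})\geq\alpha\,\xi(t)$ holds with $\alpha=1$. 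Consequently the associated function $\Psi(x)=\xi(d(x_0,x)/R)$ is exactly the characteristic function $\mathbf{1}_{B_R}$.

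With this choice, the weighted mean reduces to the ordinary mean:
\[
u_{\Psi}=\frac{\int_{B_R}u\,\Psi\,dm}{\int_{B_R}\Psi\,dm}=\frac{\int_{B_R}u\,dm}{m(B_R)}=\bar u,
\]
and $\Psi\,dm$ restricted to $B_R$ is just $dm$. Plugging these into inequality (\ref{pi12}) of Theorem \ref{weight-Poincare} immediately yields
\[
\int_{B_R}|u-\bar u|^{p}\,dm\le d_1\,e^{d_2(K+\delta^{2})R^{2}}R^{p}\int_{B_R}F^{*p}(du)\,dm,
\]
with constants $d_1,d_2$ depending on $p,n,\Lambda$ (the dependence on $\alpha$ disappears since we fixed $\alpha=1$). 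There is no real obstacle here; the only thing that deserves a line of checking is that the chosen $\xi$ belongs to the admissible class, and that the measurability/integrability of $u$ on $B_R$ combined with $\Psi\equiv\mathbf 1_{B_R}$ legitimately reduces the weighted mean to the unweighted one.
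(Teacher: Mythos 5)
Your proposal is correct and is essentially the paper's own argument: the corollary is obtained by specializing Theorem \ref{weight-Poincare} to the indicator weight $\xi\equiv 1$ on $[0,1]$, $\xi\equiv 0$ otherwise, so that $\Psi=\mathbf{1}_{B_R}$ and $u_\Psi=\bar u$. The only cosmetic point is that the theorem fixes $0<\alpha<1$, so rather than claiming $\alpha=1$ you should observe that this $\xi$ satisfies the multiplicative condition for any fixed $\alpha\in(0,1)$ (say $\alpha=\tfrac12$), which makes the constants depend only on $p$, $n$, $\Lambda$.
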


Based on $p$-Poincar\'{e} inequality with $p=2$, we can prove the following local uniform Sobolev inequality by following closely the argument of Lemma 3.2 in \cite{MW1}. Here one only needs to be careful of the non-reversibility of $F$.

\begin{thm}{\label{sobolev-ineq-N}}
Let $(M, F, m)$ be an $n$-dimensional forward  complete Finsler measure space with finite reversibility $\Lambda$. Assume that ${\rm Ric}_{\infty} \geq -K$ for some $ K \geq 0$. Then, there exist positive constants $\nu(n)>2$ and  $c=c(n, \Lambda)$ depending only on $n$ and the reversibility $\Lambda$ of $F$, such that
\begin{equation}
\left(\int_{B_{R}}\left|u-\bar{u}\right|^{\frac{2\nu}{\nu-2}} dm\right)^{\frac{\nu-2}{\nu}} \leq e^{c\left(1+(K+\delta^2)R^2\right)} m(B_R)^{-\frac{2}{\nu}}R^2 \int_{B_{R}} F^{*2}(du) dm  \label{p2}
\end{equation}
for $u \in W_{\mathrm{loc}}^{1,2}(M)$ and $B_{R}\subset M$, where $\bar{u}:=\frac{\int_{B_{R}}u dm} {m(B_{R})}$. Further,
\begin{equation}
\left(\int_{B_{R}}\left|u\right|^{\frac{2\nu}{\nu-2}} dm\right)^{\frac{\nu-2}{\nu}} \leq e^{c\left(1+(K+\delta^2)R^2\right)} m(B_R)^{-\frac{2}{\nu}}R^2 \int_{B_{R}} \left(F^{*2}(du) + R^{-2} u^2\right) dm. \label{Sobolev-2}
\end{equation}
\end{thm}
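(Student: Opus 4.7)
The plan is to obtain (\ref{p2}) by combining the $L^2$-Poincar\'e inequality just proved (the preceding Corollary with $p=2$) with the volume doubling estimate (\ref{doubvol}) via the standard Saloff-Coste--Jerison machinery showing that (doubling $+$ local $(2,2)$-Poincar\'e) implies a local $L^2$-Sobolev inequality on balls; this is precisely the content of Lemma 3.2 in \cite{MW1}. The second inequality (\ref{Sobolev-2}) will then follow from (\ref{p2}) by a short triangle-inequality argument.

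For (\ref{p2}), first I would fix the Sobolev exponent $\nu$. From (\ref{doubvol}), for every pair of concentric forward balls $B_{r_1}\subset B_{r_2}\subset B_R$ one has $m(B_{r_2})/m(B_{r_1})\leq (r_2/r_1)^{n+1}e^{(K+\delta^2)R^2/6}$, so the effective dimension is $\nu=n+1>2$ (for $n\geq 2$; otherwise one inflates slightly). Next, I would apply the $(2,2)$-Poincar\'e inequality to $u$ on small concentric balls $B_r^+(x)$ with $x\in B_R$, write $u-\bar u$ as a telescoping sum of ball-averages on dyadically shrinking balls around a Lebesgue point, and estimate the result by a Hardy--Littlewood maximal-type bound compatible with the doubling property. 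This yields control of $\|u-\bar u\|_{L^{2\nu/(\nu-2)}(B_R)}$ in terms of $\|F^*(du)\|_{L^2(B_R)}$ with the correct scaling $R^2\, m(B_R)^{-2/\nu}$ on the right. Each invocation of the Poincar\'e inequality at a scale $r\leq R$ or of the doubling estimate contributes a factor bounded by $e^{c(K+\delta^2)R^2}$, and the finite compounding of these factors gives the prefactor $e^{c(1+(K+\delta^2)R^2)}$.

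To pass from (\ref{p2}) to (\ref{Sobolev-2}), I would use the triangle inequality in $L^q(B_R)$ with $q=2\nu/(\nu-2)$:
\[
\|u\|_{L^q(B_R)}\leq \|u-\bar u\|_{L^q(B_R)}+\|\bar u\|_{L^q(B_R)}.
\]
The second term equals $|\bar u|\, m(B_R)^{1/q}$, and H\"older gives $|\bar u|\leq m(B_R)^{-1/2}\|u\|_{L^2(B_R)}$; since $1/q-1/2=-1/\nu$, multiplying shows $\|\bar u\|_{L^q(B_R)}\leq m(B_R)^{-1/\nu}\|u\|_{L^2(B_R)}$. Squaring, applying (\ref{p2}) to the first term, and rewriting $\|u\|_{L^2(B_R)}^2 = R^2\cdot R^{-2}\int_{B_R}u^2\,dm$ produces (\ref{Sobolev-2}) after absorbing constants into $c(n,\Lambda)$.

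The main obstacle is the careful bookkeeping of the non-reversibility constant $\Lambda$ throughout the Whitney/telescoping covering. Every distance comparison between two nearby balls, every enlargement appearing in Lemma \ref{p-Poincare} (the $(\Lambda+2)R$ factor on the right-hand side), and every Vitali-type covering argument introduces powers of $\Lambda$; one must verify that these compound into a constant depending only on $(n,\Lambda)$ rather than growing with the number of dyadic scales between $r$ and $R$. A secondary technical point is to ensure that all intermediate forward balls used in the telescoping remain inside $B_R$, or can be replaced by slightly enlarged ones, so that both Lemma \ref{p-Poincare} and the doubling estimate (\ref{doubvol}) continue to apply with the stated constants.
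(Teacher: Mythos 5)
Your proposal is correct and takes essentially the same route as the paper: the paper likewise obtains (\ref{p2}) from the $L^2$-Poincar\'e inequality and the volume doubling property (\ref{doubvol}) by following the argument of Lemma 3.2 in \cite{MW1}, attending only to the non-reversibility constant $\Lambda$. Your explicit triangle-inequality and H\"older step passing from (\ref{p2}) to (\ref{Sobolev-2}) is a correct filling-in of a detail the paper leaves implicit.
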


\section{Mean value inequality}\label{smean}

In this section, we will prove the mean value inequalities for positive subsolutions and supersolutions of a class of parabolic equations by using Moser's iteration.
\vskip 3mm

\begin{proof}[Proof of Theorem \ref{mean-ineq}]
Without loss of generality we may assume $\delta'=1$. Since $u$ is a positive function satisfying $\left(\Delta- \pa_{t}\right) u \geq -fu$ in the weak sense on $Q$, we have
\begin{equation}{\label{mean-2}}
\int_{B_{R}} \left[d \phi(\nabla u) + \phi \partial_t u \right] d m \leq \int_{B_{R}}\phi fu \,dm
\end{equation}
for every $t\in(s-R^2,s)$ and any nonnegative function $\phi \in \mathcal{C}_0^{\infty}\left(B_{R}\right)$.  For any $0 < \sigma<\sigma^{\prime} < 1$ and $a\geq1$, let $\phi=u^{2a-1}\varphi^2$,  where $\varphi$ is a cut-off function defined by
$$
\varphi(x)= \begin{cases}1 & \text { on } B_{\sigma R}, \\ \frac{\sigma^{\prime} R-d \left(x_0, x\right)}{\left(\sigma^{\prime}-\sigma\right) R} & \text { on } B_{\sigma^{\prime} R} \backslash B_{\sigma R}, \\ 0 & \text { on } B_R \backslash B_{\sigma^{\prime} R}.\end{cases}
$$
Then $F^*(-d \varphi) \leq \frac{1}{\left(\sigma^{\prime}-\sigma\right) R}$ and hence $F^*(d \varphi) \leq \frac{\Lambda}{\left(\sigma^{\prime}-\sigma\right) R}$ a.e. on $B_{\sigma^{\prime} R}$. Thus, by (\ref{mean-2}), we have
$$
(2a-1)\int_{B_R} \varphi^2 u^{2a-2}F^{*2}(d u) d m  +2 \int_{B_R} \varphi u^{2a-1} d \varphi(\nabla u ) d m + \int_{B_R} u^{2a-1} \varphi^2 \partial_tu \, dm \leq  \int_{B_R} \varphi^2u^{2a} f d m,
$$
from which we get
$$
a^2\int_{B_R} \varphi^2 u^{2a-2}F^{*2}(d u) d m +  a\int_{B_R} u^{2a-1} \varphi^2 \partial_tu \, dm \leq -2a \int_{B_R} \varphi u^{2a-1} d \varphi(\nabla u ) d m + a \int_{B_R} \varphi^2 u^{2a} f d m.
$$
Let $v:=u^a$. Then
\beqn
\int_{B_R} \varphi^2 F^{*2}(d v) d m + \int_{B_R} \varphi^2 v\partial_tv \, dm &\leq & -2 \int_{B_R} \varphi v d \varphi(\nabla v ) d m + a \int_{B_R} \varphi^2 v^{2} f d m \\
&\leq & 2 \int_{B_R} \varphi v F^{*}(-d \varphi) F\left(\nabla v\right)dm + a\int_{B_R}\varphi^2 v^{2} f d m \\
& \leq & \frac{1}{2} \int_{B_R} \varphi^2 F^2(\nabla v) dm + 2\int_{B_R} v^{2} F^{*2}(-d \varphi) dm \\
& &+ a \int_{B_R} \varphi^2 v^{2} f d m.\\
\eeqn
Hence, we have
\beq
\int_{B_R} \varphi^2 F^{*2}(d v) dm + 2\int_{B_R} \varphi^2 v\partial_tv \, dm &\leq &  4 \int_{B_R} v^{2} F^{* 2}(-d \varphi) d m + 2a \int_{B_R} \varphi^2 v^{2} f d m \nonumber \\
&\leq & \frac{4}{\left(\sigma^{\prime}-\sigma\right)^2 R^2}\int_{B_{\sigma^{'}R}} v^{2} dm + 2a \mathcal{A}\int_{B_{\sigma^{'}R}} v^{2} d m,\nonumber \\\label{meanv1}
\eeq
where $\mathcal{A}:=\sup\limits_{ Q }f$.

For any smooth function $\psi(t)$ of time variable $t$, we have by (\ref{meanv1})
\beq
2\int_{B_R} \partial_t(\psi^2\varphi^2 v^2) \, dm&+&\int_{B_R}\psi^2 F^{*2}(d (\varphi v)) dm \nonumber\\
&\leq& 4 \int_{B_R} \psi \varphi^2 v^2 \psi'\,dm + 4\int_{B_R} \psi^2 \varphi^2 v \partial_tv\,dm + 2 \int_{B_R} \psi^2 v^2 F^{*2}(d \varphi )dm \nonumber\\
& &+ 2 \int_{B_R} \psi^2 \varphi^2 F^{*2}(d v )dm\nonumber\\
&\leq& 4 \int_{B_R} \psi \varphi^2 v^2 \psi'\,dm + \left(\frac{8+2\Lambda^2}{\left(\sigma^{\prime}-\sigma\right)^2 R^2} + 4a \mathcal{A}\right)\int_{B_{\sigma^{'}R}} \psi^2 v^{2} dm. \label{Psiine}
\eeq

Now we choose $\psi(t)$ such that
\be
\psi(t)= \begin{cases}1 & \text { on } (s-\sigma R^2, +\infty), \\ \frac{t-(s-\sigma'R^2)}{\left(\sigma^{\prime}-\sigma\right) R^2} & \text { on } [s-\sigma'R^2, s-\sigma R^2], \\ 0 & \text { on } (-\infty, s-\sigma'R^2).\end{cases} \label{psi1}
\ee
Obviously, $|\psi'(t)|\leq \frac{1}{\left(\sigma^{\prime}-\sigma\right) R^2}$.  Then, from (\ref{Psiine}), we have
\beq
2\int_{B_R} \partial_t(\psi^2\varphi^2 v^2) \, dm&+&\int_{B_R}\psi^2 F^{*2}(d (\varphi v)) dm \nonumber\\
&\leq&  \frac{4}{\left(\sigma^{\prime}-\sigma\right) R^2}\int_{B_{\sigma'R}} \psi v^2\,dm + \left(\frac{8+2\Lambda^2}{\left(\sigma^{\prime}-\sigma\right)^2 R^2} + 4a \mathcal{A}\right)\int_{B_{\sigma^{'}R}} \psi^2 v^{2} dm. \nonumber
\eeq

Setting $I_{\sigma}=(s-\sigma R^2, s)$. For any $t\in I_{\sigma}$, integrating the above inequality over $(s-\sigma' R^2, t)$, we obtain the following inequality
\begin{equation*}
2\sup\limits_{I_{\sigma}}\left(\int_{B_R}\varphi^2 v^2 \, dm\right)+ \int_{B_R \times I_{\sigma}} F^{*2}(d (\varphi v)) dm dt \leq \frac{12+2\Lambda^2+4a\mathcal{A}R^2}{\left(\sigma^{\prime}-\sigma\right)^2 R^2}\int_{Q_{\sigma'}} v^2 dm dt.
\end{equation*}

Further, by H\"{o}lder's inequality and Sobolev inequality (\ref{Sobolev-2}), we have
\beqn
&& \int_{Q_{\sigma}} v^{2\left(1+\frac{2}{\nu}\right)}d m dt \leq \int_{s-\sigma R^2}^{s}\int_{B_R}(v \varphi)^{2\left(1+\frac{2}{\nu}\right)} d m dt \leq \int_{s-\sigma R^2}^{s}\left(\int_{B_R}(v \varphi)^{\frac{2 \nu}{\nu-2}} d m\right)^{\frac{\nu-2}{\nu}} \cdot\left(\int_{B_R}(v \varphi)^2 d m\right)^{\frac{2}{\nu}} dt\\
&& \leq \mathcal{B} \int_{B_R\times I_{\sigma}}\left(F^{* 2}(d(v \varphi))+R^{-2} v^{2} \varphi^{2}\right) d m dt\cdot\sup\limits_{I_{\sigma}}\left( \int_{B_R} v^{2} \varphi^{2} d m\right)^{\frac{2}{\nu}} \\
&& \leq \mathcal{B} \left(\int_{B_R\times I_{\sigma}}F^{* 2}(d(v \varphi)) dmdt +\sigma \sup\limits_{I_{\sigma}}\left(\int_{B_R} v^{2} \varphi^{2} d m\right)\right) \cdot\left(\frac{6+\Lambda^2+2a\mathcal{A}R^2}{\left(\sigma^{\prime}-\sigma\right)^2 R^2}\int_{Q_{\sigma^{\prime}}} v^{2} d m dt\right)^{\frac{2}{\nu}} \\
&& \leq 2\mathcal{B}\left(\frac{a(7\Lambda^2+2\mathcal{A}R^2)}{\left(\sigma^{\prime}-\sigma\right)^2 R^2}\int_{Q_{\sigma^{\prime}}} v^{2} d m dt\right)^{1+\frac{2}{\nu}},
\eeqn
where $\mathcal{B}:=e^{c\left(1+(K+\delta^2)R^2\right)} R^2 m\left(B_R\right)^{-2 / \nu}$, $\nu$ and $c$ were chosen as in Theorem \ref{sobolev-ineq-N}. Let $t:=1+\frac{2}{\nu}$, the above inequality becomes
\begin{equation*}
\int_{Q_{\sigma}} u^{2 a t } d m dt\leq 2\mathcal{B}\left(\frac{a\Theta}{\left(\sigma^{\prime}-\sigma\right)^2 R^2}\int_{Q_{\sigma^{\prime}}} u^{2 a } d m dt\right)^t,
\end{equation*}
where $\Theta:=7\Lambda^2+2\mathcal{A}R^2$. For $a\geq1$, choose $a:=\frac{p}{2}b$, $p\geq 2$ and $b\geq1$. Then the above inequality can be rewritten as
\begin{equation}
\int_{Q_{\sigma}} u^{pb t } d m dt\leq 2\mathcal{B}\left(\frac{pb\Theta}{2\left(\sigma^{\prime}-\sigma\right)^2 R^2}\int_{Q_{\sigma^{\prime}}} u^{pb } d m dt\right)^t.   \label{meanv1-3}
\end{equation}

For any $0<\delta < \delta'=1$, let $\sigma_0=1$ and $\sigma_{i+1}=\sigma_i-\frac{1-\delta}{2^{i+1}}$, $i=0,1, \cdots$. Applying (\ref{meanv1-3}) for $\sigma^{\prime}=\sigma_i, \ \sigma=\sigma_{i+1}$ and $b=t^i$, we have
$$
\int_{Q_{\sigma_{i+1}}} u^{p t^{i+1}} d m dt\leq 2\mathcal{B}\left(\frac{4^{i+1}pt^i\Theta}{2((1-\delta) R)^2}\int_{Q_{\sigma_i }} u^{p t^i} d m dt\right)^t.
$$
By iteration, one obtains that
\beqn
\|u^p\|_{L^{t^{i+1}}(Q_{\sigma_{i+1}})}&=& \left(\int_{Q_{\sigma_{i+1}}} u^{p t^{i+1}} d m dt\right)^{\frac{1}{t^{i+1}}} \\
&\leq & (2\mathcal{B})^{\sum t^{-j}}4^{\sum j t^{1-j}}\left(\frac{p\Theta}{2(1-\delta)^2 R^2}\right)^{\sum t^{1-j}} t^{\sum (j-1)t^{1-j}}\cdot \int_{Q} u^p d m dt,
\eeqn
in which $\sum$ denotes the summation on $j$ from 1 to $i+1$. Since $\sum_{j=1}^{\infty} t^{-j}=\frac{\nu}{2}$ and $\sum_{j=1}^{\infty} j t^{-j}=\frac{\nu^2+2\nu}{4}$, we have
\beq
\|u^p\|_{L^{\infty} (Q_{\delta})} &\leq & c_0 \mathcal{B}^{\frac{\nu}{2}}(p\Theta)^{1+\frac{\nu}{2}}(1-\delta)^{-(2+\nu)}R^{-(2+\nu)}\int_{Q} u^p d m dt\nonumber\\
&= & e^{\tilde{C}(1+(K+\delta^2)R^2)}\Xi_{\mathcal{A}, R} (1-\delta)^{-(2+\nu)}R^{-2} m\left(B_R\right)^{-1}  \int_{Q} u^p d m dt,
\label{meanp2}
\eeq
which implies (\ref{meanineq-1}) with $p\geq2$, where $c_0=4^{\frac{\nu^2+4\nu+2}{4}}\left(1+\frac{2}{\nu}\right)^{\frac{\nu^2+2\nu}{4}}$, $\Xi_{\mathcal{A}, R}=(7\Lambda^2+2\mathcal{A}R^2)^{1+\frac{\nu}{2}}$ and $\tilde{C}:=\log(c_0 p^{1+\frac{\nu}{2}})+\frac{\nu}{2}c>0$. This completes the proof in the case when $p\geq2$.

Next we will consider the case when $0<p<2$ by using Moser iteration again. For any $0<\sigma<\sigma'\leq \delta'=1$,  (\ref{meanp2}) implies
\beq
\sup\limits_{Q_{\sigma}} u^{2}
&\leq& e^{\tilde{C}\left(1+(K+\delta^2)R^2\right)}\Xi_{\mathcal{A}, R}(\sigma'-\sigma)^{-(2+\nu)} R^{-2} m(B_{R})^{-1}\int_{Q_{\sigma'}} u^2 dm dt \nonumber\\
&\leq& e^{\tilde{C}\left(1+(K+\delta^2)R^2\right)}\Xi_{\mathcal{A}, R}(\sigma'-\sigma)^{-(2+\nu)}R^{-2} m(B_{R})^{-1}\left(\sup\limits_{Q_{\sigma'}}u^2\right)^{1-\frac{p}{2}}\int_{Q} u^p dmdt.
\label{meanpp-1}
\eeq

Let $\lambda=1-\frac{p}{2}>0$ and $A(\sigma):=\sup\limits_{Q_{\sigma }}u^2$. Choose $\sigma_0=\delta$ and $\sigma_i=\sigma_{i-1}+\frac{1-\delta}{2^i}$, $i=1,2\cdots$. Applying (\ref{meanpp-1}) for $\sigma=\sigma_{i-1}$ and $\sigma'=\sigma_{i}$, we have
$$
A(\sigma_{i-1})\leq \mathcal{\tilde{B}}\ 2^{i(2+\nu)}(1-\delta)^{-(2+\nu)}A(\sigma_{i})^{\lambda},
$$
where $\mathcal{\tilde{B}}:=e^{\tilde{C}(1+(K+\delta^2)R^2)}\Xi_{\mathcal{A}, R} R^{-2}m(B_{R})^{-1}\int_{Q} u^p dmdt.$
By iterating, we get
$$
A(\sigma_0)\leq \mathcal{\tilde{B}}^{\sum_{i=1}^j\lambda^{i-1}} 2^{(2+\nu)\sum_{i=1}^ji\lambda^{i-1}}(1-\delta)^{-(2+\nu)\sum_{i=1}^j\lambda^{i-1}}A(\sigma_{j})^{\lambda^j}.
$$
Since $\lim\limits_{j\rightarrow\infty}\sigma_j=1$, $\lim\limits_{j\rightarrow\infty}\lambda^j=0$, $\sum_{i=1}^{\infty}\lambda^{i-1}=\frac{2}{p}$ and $\sum_{i=1}^{\infty}i\lambda^{i-1}$ converges, we obtain by letting $j\rightarrow\infty$
$$
\sup\limits_{Q_{\delta }}u^p \leq A(\delta)^{\frac{p}{2}} \leq e^{C\left(1+(K+\delta^2)R^2\right)}\Xi_{\mathcal{A}, R}(1-\delta)^{-(2+\nu)} R^{-2}m(B_{R})^{-1}\int_{Q} u^p dmdt,
$$
where $C=C(n, \nu, p, \Lambda)>0$. This completes the proof of Theorem \ref{mean-ineq}.
\end{proof}

\vskip 2mm
Similarly, we can give the proof of Theorem \ref{mean-sup}.
\vskip 2mm

\begin{proof}[Proof of Theorem \ref{mean-sup}]
Without loss of generality we may assume $\delta'=1$. Since $u$ is a positive function satisfying $\left(\Delta-\frac{\partial}{\partial t}\right) u \leq fu$ in the weak sense on $Q$, we have
\begin{equation}\label{mean-sup-2}
\int_{B_{R}}\left[ d \phi(\nabla u) + \phi \partial_t u \right] d m \geq -\int_{B_{R}}\phi fu \,dm
\end{equation}
for every $t\in(s-R^2,s)$ and any nonnegative function $\phi \in \mathcal{C}_0^{\infty}\left(B_{R}\right)$. For any $0<\sigma<\sigma^{\prime} < 1$, let $\phi=-bu^{b-1}\varphi^2$ and $b\leq-2$, where $\varphi$ is a cut-off function defined by
$$
\varphi(x)= \begin{cases}1 & \text { on } B_{\sigma R}, \\ \frac{\sigma^{\prime} R-d \left(x_0, x\right)}{\left(\sigma^{\prime}-\sigma\right) R} & \text { on } B_{\sigma^{\prime} R} \backslash B_{\sigma R}, \\ 0 & \text { on } B_R \backslash B_{\sigma^{\prime} R}.\end{cases}
$$
Obviously, $F^*(-d \varphi) \leq \frac{1}{\left(\sigma^{\prime}-\sigma\right) R}$ and hence $F^*(d \varphi) \leq \frac{\Lambda}{\left(\sigma^{\prime}-\sigma\right) R}$ a.e. on $B_{\sigma^{\prime} R}$. Then,  (\ref{mean-sup-2}) becomes
$$
b(b-1)\int_{B_R} \varphi^2 u^{b-2}F^{*2}(d u) d m  +2 b \int_{B_R} \varphi u^{b-1} d \varphi(\nabla u ) d m + b\int_{B_R} u^{b-1}\varphi^2\partial_{t}udm \leq -b\int_{B_R} u^{b}\varphi^{2}fdm.
$$
Set $w:=u^{\frac{b}{2}}$. Then $dw=-\left|\frac{b}{2}\right|u^{\frac{b}{2}-1} du$. Hence, we have
$$\frac{b^2}{4}u^{b-2}\Lambda^{-2} F^{*2}(d u)\leq F^{*2}(d w)=\frac{b^2}{4}u^{b-2}F^{*2}(-d u)\leq \frac{b^2}{4}u^{b-2}\Lambda^2 F^{*2}(d u).$$
Then we get
\beqn
4\int_{B_R} \varphi^2F^{*2}(d w) d m &\leq &\frac{4b(b-1)}{b^2}\int_{B_R} \varphi^2F^{*2}(d w) d m\\
&\leq& -2 b \Lambda^2 \int_{B_R} \varphi u^{b-1} d \varphi(\nabla u ) d m-b\Lambda^2\int_{B_R}u^{b-1}\varphi^2 \partial_tu dm-b\Lambda^2\int_{B_R}u^b\varphi^2fdm\\
&\leq & \frac{1}{2\Lambda^2}b^2\int_{B_R} \varphi^2 u^{b-2} F^2(\nabla u) dm + 2 \Lambda^6 \int_{B_R} u^b F^{*2}(d \varphi) dm\\
& &-b\Lambda^2\int_{B_R}u^{b-1}\varphi^2 \partial_tu dm-b\Lambda^2\int_{B_R}u^b\varphi^2fdm\\
&\leq& 2 \int_{B_R} \varphi^2 F^2(\nabla w) dm + 2 \Lambda^6 \int_{B_R} w^2 F^{*2}(d \varphi) dm-2\Lambda^2\int_{B_R}\varphi^2w\partial_twdm\\
& & -b\Lambda^2\int_{B_R}w^2\varphi^2fdm,
\eeqn
namely,
\beq
\int_{B_R} \varphi^2F^{*2}(d w) d m + \Lambda^2\int_{B_R}\varphi^2w\partial_twdm
&\leq&  \Lambda^6 \int_{B_R} w^2 F^{*2}(d \varphi) dm - \frac{b}{2}\Lambda^2\int_{B_R}w^2\varphi^2fdm\nonumber\\
&\leq& \left(\frac{\Lambda^8}{\left(\sigma^{\prime}-\sigma\right)^2 R^2}-\frac{b}{2}\Lambda^2\mathcal{A}\right)\int_{B_{\sigma'R}}w^2dm.\label{mean-sup-1}
\eeq

For any smooth function $\psi(t)$ of time variable $t$, we have by (\ref{mean-sup-1})
\beq
\Lambda^2\int_{B_R}\partial_t(\psi^2\varphi^2 w^2) \, dm &+&\int_{B_R}\psi^2 F^{*2}(d (\varphi w)) dm \nonumber\\
&\leq& 2\Lambda^2\int_{B_R} \psi \varphi^2 w^2 \psi'\,dm + \left(\frac{2\Lambda^8+2\Lambda^2}{\left(\sigma^{\prime}-\sigma\right)^2 R^2}-b\Lambda^2\mathcal{A}\right) \int_{B_{\sigma^{'}R}} \psi^2 w^{2} dm. \nonumber
\eeq

Now we choose $\psi(t)$ same as (\ref{psi1}).  Then, from the above inequality, we have
\beqn
\Lambda^2\int_{B_R}\partial_t(\psi^2\varphi^2 w^2) \, dm &+&\int_{B_R}\psi^2 F^{*2}(d (\varphi w)) dm \nonumber\\
&\leq&  \frac{2\Lambda^2}{\left(\sigma^{\prime}-\sigma\right) R^2}\int_{B_{\sigma'R}} \psi w^2\,dm + \left(\frac{2\Lambda^8+2\Lambda^2}{\left(\sigma^{\prime}-\sigma\right)^2 R^2}-b\Lambda^2\mathcal{A}\right) \int_{B_{\sigma^{'}R}} \psi^2 w^{2} dm.
\eeqn

Setting $I_{\sigma}=(s-\sigma R^2, s)$. For any $t\in I_{\sigma}$, integrating the above inequality over $(s-\sigma' R^2, t)$ yields
\beq
\Lambda^2\sup\limits_{I_{\sigma}}\left(\int_{B_R}\varphi^2 w^2 \, dm\right)+\int_{B_R \times I_{\sigma}} F^{*2}(d (\varphi w)) dm dt
\leq \frac{6\Lambda^8-b \Lambda^2\mathcal{A}R^2}{\left(\sigma^{\prime}-\sigma\right)^2 R^2} \int_{Q_{\sigma'}} w^2\,dm dt.\nonumber
\eeq

Further, by H\"{o}lder's inequality and Sobolev inequality (\ref{Sobolev-2}) again, we have
\[
\int_{Q_{\sigma}} w^{2\left(1+\frac{2}{\nu}\right)}d m dt \leq
\Lambda^2\mathcal{B}\left(\frac{-b(3\Lambda^6+\mathcal{A}R^2)}{\left(\sigma^{\prime}-\sigma\right)^2 R^2}\int_{Q_{\sigma^{\prime}}} w^{2} d m dt\right)^{1+\frac{2}{\nu}},
\]
where $\mathcal{B}:=e^{c\left(1+(K+\delta^2)R^2\right)} R^2 m\left(B_R\right)^{-2 / \nu}$, $\nu$ and $c$ were chosen as in Theorem \ref{sobolev-ineq-N}. Let $t:=1+\frac{2}{\nu}$, the above inequality becomes
\begin{equation*}
\int_{Q_{\sigma}} u^{b t } d m dt\leq \Lambda^2\mathcal{B}\left(\frac{-b\Theta}{\left(\sigma^{\prime}-\sigma\right)^2 R^2}\int_{Q_{\sigma^{\prime}}} u^{b} d m dt\right)^t,
\end{equation*}
where $\Theta:=3\Lambda^6+\mathcal{A}R^2$. For $b\leq-2$, choose $b:=-p\beta$, $p\geq 2$ and $\beta\geq1$. Then the above inequality is rewritten as
\be
\int_{Q_{\sigma}} \left(u^{-p}\right)^{\beta t } d m dt\leq \Lambda^2\mathcal{B}\left(\frac{p\beta\Theta}{\left(\sigma^{\prime}-\sigma\right)^2 R^2}\int_{Q_{\sigma^{\prime}}} \left(u^{-p}\right)^{\beta} d m dt\right)^t. \label{meanineq2}
\ee
(\ref{meanineq2}) is just an analogue of (\ref{meanv1-3}). Iterating the above inequality along the proof of Theorem \ref{mean-ineq}, we can get (\ref{meanineq-sup-1}). This finishes the proof of Theorem \ref{mean-sup}.
\end{proof}

\begin{rem} {\rm From the proofs of Theorem \ref{mean-ineq} and Theorem \ref{mean-sup}, we can see that, if we replace the condition about ${\rm Ric}_{\infty}$ to the condition that the Sobolev inequality (\ref{Sobolev-2}) is satisfied on Finsler measure space $(M, F, m)$ in these two theorems, we still have the mean inequalities (\ref{meanineq-1}) and (\ref{meanineq-sup-1}) for positive subsolutions and supersolutions of the parabolic differential equations respectively.}
\end{rem}

As an application of mean value inequality  (\ref{meanineq-1}), we have the following gradient estimate for positive solutions to heat equation.

\begin{thm}\label{gradient}
Let $(M, F, m)$ be an $n$-dimensional forward complete Finsler measure space equipped with a uniformly convex and uniformly smooth Finsler metric $F$. Assume that ${\rm Ric}_{\infty} \geq -K$ for some $K\geq 0$. If $u$ is a positive solution to heat equation $\pa_{t}u=\Delta u$ in $Q=B_{R}\times (s-R^2, s)$ for $s\geq R^2$, then there exist positive constant $C=C\left(n, \kappa, \kappa^*,\nu\right)$ depending on $n$, $\kappa$, $\kappa^*$ and  $\nu$, such that
\be
\sup _{Q_{\frac{1}{2}R}} F^{2}(x, \nabla u) \leq e^{C\left(1+(K+\delta^2)R^2\right)}(1+K R^2)^{1+\frac{\nu}{2}} R^{-4}m\left(B_R\right)^{-1} \int_{Q_{\frac{3}{4}R}}u^2 dmdt,
\ee
where $Q_{\delta}:=B_{\delta R}\times (s-\delta R^2, s)$.
\end{thm}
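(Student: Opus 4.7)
The plan is to apply Moser iteration to the quantity $w := F^{2}(\nabla u)$ after using the Bochner--Weitzenb\"{o}ck formula to show that $w$ is a nonnegative subsolution of a parabolic differential inequality of the type already handled in Theorem \ref{mean-ineq}, and then to control the resulting right-hand side by $\int u^{2}$ through a standard Caccioppoli estimate.

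First, since $u$ satisfies $\partial_{t}u=\Delta u$, the identity
\[
\partial_{t}\!\left[\frac{F^{2}(\nabla u)}{2}\right]=d(\partial_{t}u)(\nabla u)=d(\Delta u)(\nabla u)
\]
together with Theorem \ref{boch} yields
\[
\Delta^{\nabla u}\!\left[\frac{F^{2}(\nabla u)}{2}\right]-\partial_{t}\!\left[\frac{F^{2}(\nabla u)}{2}\right]=\operatorname{Ric}_{\infty}(\nabla u)+\|\nabla^{2}u\|_{HS(\nabla u)}^{2}\geq -K\,F^{2}(\nabla u)
\]
on $M_{u}$, and its weak analogue via \eqref{BWforinf} for admissible test functions on $Q$. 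Hence $w$ is a nonnegative weak subsolution of $(\Delta^{\nabla u}-\partial_{t})w\geq -2Kw$. Because $F$ is uniformly smooth and convex with constants $\kappa,\kappa^{*}$, the Dirichlet form of $\Delta^{\nabla u}$ is uniformly equivalent to the one used in the proof of Theorem \ref{mean-ineq}; the Moser iteration there carries over with the constants now depending on $n,\nu,\kappa,\kappa^{*}$. Applying that iteration with $p=1$, $f\equiv 2K$ (so $\mathcal{A}=2K$), and the pair of cylinders $Q_{\delta}\subset Q_{\delta'}$ for $(\delta,\delta')=(\tfrac12,\tfrac34)$ I expect
\[
\sup_{Q_{R/2}}F^{2}(\nabla u)\leq e^{C_{1}(1+(K+\delta^{2})R^{2})}(1+KR^{2})^{1+\nu/2}\,R^{-2}\,m(B_{R})^{-1}\!\int_{Q_{3R/4}}F^{2}(\nabla u)\,dm\,dt,
\]
where the factor $(7\Lambda^{2}+4KR^{2})^{1+\nu/2}$ produced by Theorem \ref{mean-ineq} has been absorbed into $(1+KR^{2})^{1+\nu/2}$ using $\Lambda\leq\sqrt{\kappa}$.

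Second, to estimate the right-hand integral I would use a Caccioppoli-type energy estimate obtained by testing the heat equation against $\eta(t)^{2}\varphi(x)^{2}u$, where $\varphi,\eta$ are cut-offs with $\varphi\equiv 1$ on $B_{3R/4}$, $\mathrm{supp}\,\varphi\subset B_{R}$, $F^{*}(d\varphi)\leq c/R$, and analogous control on $\eta$. Integration by parts in $x$ and $t$, followed by Cauchy--Schwarz and absorption of the gradient term, yields
\[
\int_{Q_{3R/4}}F^{2}(\nabla u)\,dm\,dt\;\leq\;\tilde{C}\,R^{-2}\!\int_{Q_{R}}u^{2}\,dm\,dt,
\]
with $\tilde{C}=\tilde{C}(\kappa,\kappa^{*})$. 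A routine two-scale iteration between radii in $(\tfrac{R}{2},R)$ allows one to replace $Q_{R}$ on the right by $Q_{3R/4}$, at the cost of enlarging the numerical constants. Combining this with the mean value estimate above and consolidating constants produces the claimed inequality.

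The principal obstacle is the rigorous passage from the pointwise Bochner identity on $M_{u}$ to the weak parabolic subsolution property for $w$ on all of $Q$: the solution $u$ of the heat equation has only $H^{2}_{\mathrm{loc}}$ regularity in $x$ and $\mathcal{C}^{1,\alpha}$ regularity in $(x,t)$, so \eqref{BWforinf} has to be used carefully together with an approximation of non-smooth test functions of the form $\phi=w^{2a-1}\varphi^{2}\eta^{2}$ that appear in the Moser scheme. The related technical point is that Theorem \ref{mean-ineq} is stated for the Finsler Laplacian $\Delta$ while what is produced by Bochner is a subsolution of the linearized operator $\Delta^{\nabla u}$; the uniform convexity and smoothness of $F$ are precisely what make the Sobolev inequality (\ref{Sobolev-2}) and the iterative scheme transport to this linearized setting without structural change.
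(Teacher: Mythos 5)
Your strategy is in substance the same as the paper's: use the Bochner--Weitzenb\"{o}ck formula (\ref{BWforinf}) together with $\partial_t\bigl(F^2(\nabla u)\bigr)=2\,d(\Delta u)(\nabla u)$ to show that $w=F^2(\nabla u)$ is a weak subsolution of $(\Delta^{\nabla u}-\partial_t)w\geq -2Kw$, rerun the Moser iteration of Theorem \ref{mean-ineq} for the linearized operator $\Delta^{\nabla u}$ (with the uniform convexity and smoothness constants $\kappa,\kappa^*$ playing the role of $\Lambda$, via (\ref{unisc}) and the Sobolev inequality (\ref{Sobolev-2})), and then control $\int F^2(\nabla u)\,dm\,dt$ by $R^{-2}\int u^2\,dm\,dt$ through a Caccioppoli estimate obtained by testing the heat equation against $\varphi^2\psi^2u$. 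This is exactly the paper's proof, including your correct identification of the two delicate points (weak use of the Bochner formula given only $H^2_{\rm loc}\cap\mathcal{C}^{1,\alpha}$ regularity, and the passage from $\Delta$ to $\Delta^{\nabla u}$).

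The one step that would fail as written is your allocation of radii. You run the mean value inequality on the pair $(\tfrac12,\tfrac34)$, which forces the Caccioppoli estimate onto the pair $(\tfrac34,1)$ and yields $\int_{Q_1}u^2\,dm\,dt$ on the right; you then assert that a ``routine two-scale iteration between radii in $(\tfrac{R}{2},R)$'' upgrades $Q_1$ to $Q_{3/4}$. No such iteration is available: each Caccioppoli application bounds $\int_{Q_{\sigma}}F^2(\nabla u)\,dm\,dt$ by $C(\sigma'-\sigma)^{-2}R^{-2}\int_{Q_{\sigma'}}u^2\,dm\,dt$ with $\sigma'>\sigma$ strictly, and since the right-hand side is the $u^2$-integral rather than the quantity being iterated, repeated application can never shrink the domain of $\int u^2$ below that of the first application; nor can one post-process, since $\int_{Q_1}u^2$ is not controlled by $\int_{Q_{3/4}}u^2$ for positive solutions (the cylinders share the final time $s$, so values of $u$ near the lateral boundary of $B_R$ at times close to $s$ are invisible to $Q_{3/4}$). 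The repair is simply to nest the radii from the start, exactly as the paper does: apply the mean value inequality on $(\delta,\delta')=(\tfrac12,\tfrac23)$ and the Caccioppoli estimate on $(\tfrac23,\tfrac34)$, with the spatial cut-off equal to $1$ on $B_{2R/3}$ and supported in $B_{3R/4}$ and the analogous time cut-off, so that every test function lives inside $B_{3R/4}\times(s-\tfrac34R^2,s)$ and the stated bound with $\int_{Q_{3/4}}u^2\,dm\,dt$ follows directly.
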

\begin{proof}
Let $u(x,t)$ be a positive solution to heat equation $\pa_{t}u=\Delta u$ in $Q=B_{R}\times (s-R^2, s)$. Then $u\in H^2_{loc}(B_R)\cap \mathcal{C}^{1, \alpha}(Q)$ and $\partial_tu\in H^1_{loc}(B_R)\bigcap \mathcal{C}(B_R)$. It follows from the Bochner-Weitzenb\"{o}ck type formula (\ref{BWforinf}) and ${\rm Ric}_{\infty}\geq-K$ that
$$
-\int_{B_R} d\phi\left(\nabla^{\nabla u}F^2(\nabla u)\right) dm-2\int_{B_R}\phi d \left(\Delta u\right)(\nabla u)dm\geq -2K\int_{B_R}\phi F^2(\nabla u) dm
$$
for each nonnegative bounded function $\phi\in H_0^{1}(B_R) \bigcap L^{\infty}(B_R)$ and every $t\in (s-R^2, s)$. Since $\partial_t(F^{2}(\nabla u))=2d(\Delta u)(\nabla u)$   holds almost everywhere for all $t>0$ (see Lemma 14.1 of \cite{OHTA} or (4.2) in \cite{OS2}), the above inequality becomes
\be
\int_{B_R} \phi\left(\Delta^{\nabla u}F^2(\nabla u)- \partial_t(F^{2}(\nabla u))\right) dm\geq -2K\int_{B_R}\phi F^2(\nabla u) dm \label{geheat}
\ee
for every $t\in I$. By replacing $u$ by $F^2(\nabla u)$, we can find that (\ref{geheat}) is an analogue of (\ref{mean-2}) with $f=2K$. Hence, along the proof of Theorem \ref{mean-ineq} and by uniform convexity and uniform smoothness conditions, we can obtain for $\delta=\frac{1}{2}$, $\delta'=\frac{2}{3}$ and $p=1$ the following
\begin{equation}\label{F-meanineq-1}
\sup _{Q_{\frac{1}{2}}} F^2(\nabla u) \leq e^{C\left(1+(K+\delta^2)R^2\right)}(1+KR^2)^{1+\frac{\nu}{2}} R^{-2}m\left(B_R\right)^{-1}  \int_{Q_{\frac{2}{3}}} F^2(\nabla u) dmdt,
\end{equation}
where $C=C(n, \kappa, \kappa^*, \nu)$ is a universal constant.

In the following, we continue to denote by $C>0$ some universal constant, which may be different line by line. Let $\varphi$ a cut-off function defined by
$$
\varphi(x)= \begin{cases}1 & \text { on } B_{\frac{2}{3}R}, \\ \frac{9R-12d \left(x_0, x\right)}{R} & \text { on } B_{\frac{3}{4}R} \backslash B_{\frac{2}{3}R}, \\ 0 & \text { on } B_{R} \backslash B_{\frac{3}{4}R}.\end{cases}
$$
Then $F^*(-d \varphi) \leq \frac{12}{ R}$ and hence $F^*(d \varphi) \leq \frac{12\kappa}{R}$ a.e. on $B_{\frac{3}{4} R}$. It follows that
\beqn
\int_{B_R}\varphi^2 F^2(\nabla u) dm + \int_{B_R} \varphi^2 u \partial_t u dm&=&\int_{B_R}\varphi^2 du(\nabla u) dm-\int_{B_R} d(\varphi^2 u )(\nabla u) dm\\
&=&-2\int_{B_R}u\varphi d\varphi(\nabla u)dm\\
&\leq &~\frac{1}{2}\int_{B_R}\varphi^2 F^2(\nabla u) dm +2\int_{B_R}u^2 F^{*2}(-d\varphi) dm,
\eeqn
that is,
$$
\int_{B_R}\varphi^2 F^2(\nabla u) dm+\int_{B_R} \varphi^2 \partial_t (u^2) dm\leq 4\int_{B_R}u^2 F^{*2}(-d\varphi) dm \leq \frac{576}{R^2}\int_{B_{\frac{3}{4}R}}u^2 dm.
$$

Now we choose $\psi(t)$ such that
\beq
\psi(t)= \begin{cases}1 & \text { on } (s-\frac{2}{3} R^2, +\infty), \\ \frac{12t-(12s-9R^2)}{R^2} & \text { on } [s-\frac{3}{4}R^2, s-\frac{2}{3} R^2], \\ 0 & \text { on } (-\infty, s-\frac{3}{4}R^2).\end{cases} \nonumber
\eeq
Obviously, $|\psi'(t)|\leq \frac{12}{ R^2}$.  Then we have
\beq
\int_{B_R} \partial_t(\psi^2\varphi^2 u^2) \, dm &+&\int_{B_R}\varphi^2 \psi^2F^2(\nabla u) dm \nonumber\\
&=& 2 \int_{B_R} \psi \varphi^2 u^2 \psi'\,dm + \int_{B_R} \varphi^2 \psi^2  \partial_t(u^2)\,dm + \int_{B_R}\varphi^2 \psi^2F^2(\nabla u) dm\nonumber\\
&\leq& \frac{24}{R^2} \int_{B_{\frac{3}{4}R}} \psi u^2 \,dm + \frac{576}{R^2}\int_{B_{\frac{3}{4}R}}\psi^2 u^2 dm.\nonumber
\eeq

Setting $I_{\frac{2}{3}}=(s-\frac{2}{3} R^2, s)$. For any $t\in I_{\frac{2}{3}}$, integrating the above inequality over $(s-\frac{3}{4} R^2, t)$, we can obtain the following inequality
$$
\int_{I_{\frac{2}{3}}}\int_{B_{\frac{2}{3}R}} F^2(\nabla u) dmdt\leq \frac{600}{R^2}\int_{I_{\frac{3}{4}}}\int_{B_{\frac{3}{4}R}}u^2 dmdt.
$$
Then it follows from (\ref{F-meanineq-1}) that
\begin{equation*}
\sup _{Q_{\frac{1}{2}}} F^2(\nabla u) \leq e^{C(1+(K+\delta^2)R^2)}(1+KR^2)^{1+\frac{\nu}{2}} R^{-4}m\left(B_R\right)^{-1} \int_{Q_{\frac{3}{4}R}}u^2 dmdt.
\end{equation*}
This completes the proof of Theorem \ref{gradient}.
\end{proof}

\section{Harnack inequality}\label{sharnack}
In this section, we will give the proof of Theorem \ref{harnack}. First, we need the following lemma, which is important for our proof. Let $d\bar{m}=dm\times dt$ be the natural product measure on $M\times \mathbb{R}$.

\begin{lem}\label{log}
Let $(M, F, m)$ be an $n$-dimensional forward complete Finsler measure space with finite reversibility $\Lambda$. Assume that ${\rm Ric}_{\infty}\geq -K$ for some $K\geq 0$. Fix $\delta$, $\tau\in(0,1)$ and $s\geq R^2$. Then, for any positive solution $u=u(x,t)$ to heat equation  in $Q=B_{R}\times (s-R^2, s)$, there exists a constant $c=c(u)$ depending on $u$ such that for all $\lambda>0$,
\beq
&& \bar{m}\left(\{(x,t)\in K_{+} \mid \log u<-\lambda-c\}\right) \leq  C_{0} \bar{m}(Q) \lambda^{-1},\label{c-1}\\
&& \bar{m}\left(\{(x,t)\in K_{-} \mid \log u>\lambda-c\}\right) \leq C_{0} \bar{m}(Q) \lambda^{-1},\label{c-2}
\eeq
where $C_{0}=C_0(n, \Lambda, \delta, \tau)$, $K_{+}=B_{\delta R}\times (s-\tau R^2, s)$ and $K_{-}= B_{\delta R}\times(s-R^2, s-\tau R^2)$.
\end{lem}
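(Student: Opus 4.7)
The plan is to implement the Moser--Saloff-Coste log-lemma strategy for positive solutions of parabolic equations, adapted to the Finsler setting. Set $w := \log u$, which is well defined since $u > 0$. Formally, $u = e^w$ gives $\nabla u = u\,\nabla w$ and $du = u\,dw$, and plugging into $\partial_t u = \Delta u$ produces the identity
\[
\partial_t w \;=\; \Delta^{\nabla u} w \;+\; F^{*2}(dw),
\]
where the \emph{linearised} weighted Laplacian $\Delta^{\nabla u}$ appears because the Finsler Laplacian is nonlinear. I would justify this as a weak identity against test functions supported in $M_u$, using the $H^2_{\mathrm{loc}}$-regularity of $u$ stated in the preliminaries, and regularising $u \mapsto u + \varepsilon$ if needed to handle $\{du = 0\}$.

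Pick a Lipschitz cut-off $\phi$ with $\phi \equiv 1$ on $B_{\delta R}$, supported in a slightly larger ball $B_{\delta' R} \subset B_R$, and $F^*(\pm d\phi) \leq C\Lambda/R$. Set $A := \int \phi^2\,dm$ and $W(t) := A^{-1}\int \phi^2 w(\cdot,t)\,dm$. Testing the weak identity for $w$ against $\phi^2$, integrating by parts, and absorbing the cross term $\phi\,d\phi(\nabla^{\nabla u} w)$ via Young's inequality gives
\[
W'(t) \;\geq\; \tfrac{1}{2A}\!\int \phi^2 F^{*2}(dw)\,dm \;-\; C\Lambda^2/R^2.
\]
Applying the weighted Poincar\'e inequality (Theorem~\ref{weight-Poincare}) with $\Psi = \phi^2$, $p = 2$, one bounds the weighted variance $\sigma^2(t) := A^{-1}\!\int \phi^2 (w-W(t))^2\,dm$ above by $C R^2 e^{C'(K+\delta^2)R^2} A^{-1}\!\int \phi^2 F^{*2}(dw)\,dm$. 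Combining, one obtains the key differential inequality
\[
W'(t) \;\geq\; \alpha\,\sigma^2(t) \;-\; \beta,\qquad \alpha := C_1^{-1} R^{-2} e^{-C_2(K+\delta^2)R^2},\quad \beta := C_3/R^2,
\]
so that $\bar W(t) := W(t) + \beta t$ is nondecreasing.

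Now set $c := -W(s - \tau R^2)$. Monotonicity of $\bar W$ gives $W(t) \geq -c - \beta\tau R^2$ on the time slice of $K_+$ and $W(t) \leq -c + \beta(1-\tau) R^2$ on that of $K_-$. Hence, for $\lambda$ larger than a $\tau$-dependent constant (smaller $\lambda$ being trivial), the event $\{\log u(x,t) < -\lambda - c\}$ on $K_+$ forces $w(x,t) - W(t) < -\lambda/2$, and symmetrically on $K_-$. By $L^1$ Chebyshev at fixed $t$, Cauchy--Schwarz, and $\phi \equiv 1$ on $B_{\delta R}$,
\[
m\bigl\{x \in B_{\delta R} : |w(x,t) - W(t)| > \lambda/2\bigr\} \;\leq\; \tfrac{2A}{\lambda}\sigma(t).
\]
Integrating in $t$ and applying Cauchy--Schwarz once more,
\[
\bar m\bigl(K_+ \cap \{w < -\lambda - c\}\bigr) \;\leq\; \tfrac{2A}{\lambda}(\tau R^2)^{1/2}\Bigl(\!\int_{s-\tau R^2}^{s}\sigma^2(t)\,dt\Bigr)^{1/2},
\]
and the last integral is controlled by $\alpha^{-1}(W(s) - W(s-\tau R^2) + \beta\tau R^2)$, whose $u$-dependent oscillation can be absorbed into $c(u)$ (or by taking $c$ to be an appropriate time-average of $W$). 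Since $A R^2 \asymp \bar m(Q)$ by volume doubling (Theorem~\ref{volume}), the desired bound $C_0 \bar m(Q)/\lambda$ follows; (\ref{c-2}) is proved symmetrically.

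The main obstacle is making the formal identity $\partial_t \log u = \Delta^{\nabla u}(\log u) + F^{*2}(d\log u)$ rigorous in weak form. Because the Finsler Laplacian is nonlinear and $u$ has only $H^2_{\mathrm{loc}}$-regularity in $x$, one must consistently use the linearised weighted Laplacian $\Delta^{\nabla u}$ and carefully handle the nullset $\{du = 0\}$, where the reference vector for $\Delta^{\nabla u}$ is undefined; a perturbation argument $u \mapsto u + \varepsilon$ together with the identity $\nabla^{\nabla u} u = \nabla u$ should resolve this, in line with the framework of \cite{OS1,OS2}. Non-reversibility of $F$ enters through powers of $\Lambda$ each time $F^*(d\phi)$ is traded for $F^*(-d\phi)$ in the integration by parts and in invoking the weighted Poincar\'e inequality, which accounts for the $\Lambda$-dependence of $C_0$.
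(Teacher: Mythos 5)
Your overall skeleton — logarithmic transform, cut-off test function, weighted Poincar\'{e} inequality (Theorem \ref{weight-Poincare}) applied with $\Psi=\psi^2$, the corrected weighted mean $\bar W(t)=W(t)+\beta t$ being monotone, and the choice $c=-W(s-\tau R^2)$ — is exactly the paper's (the paper works with $w=-\log u$ and obtains the same differential inequality by testing the weak heat equation directly with $\psi^2u^{-1}$, which incidentally sidesteps the chain-rule/regularization issue you worry about at the end). The gap is in the final measure estimate. You bound $m\{x\in B_{\delta R}: |w-W(t)|>\lambda/2\}$ by $\frac{2A}{\lambda}\sigma(t)$, integrate in $t$, and control $\int\sigma^2\,dt$ by $\alpha^{-1}\bigl(W(s)-W(s-\tau R^2)+\beta\tau R^2\bigr)$. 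But $W(s)-W(s-\tau R^2)$ is a genuinely $u$-dependent quantity with no uniform upper bound: the differential inequality only bounds $W'$ from below, and (already in the Euclidean case, e.g.\ a heat kernel centered far outside $B_R$) the spatial average of $\log u$ over $B_{\delta R}$ can increase arbitrarily fast on a fixed time window. This term cannot be ``absorbed into $c(u)$'': $c$ is a single additive normalization that must serve both \eqref{c-1} on $K_+$ and \eqref{c-2} on $K_-$, and the required bound $C_0\,\bar m(Q)\lambda^{-1}$ must hold for \emph{all} $\lambda>0$ with a constant independent of $u$; shifting $c$ changes which set you measure but does not remove a multiplicative $u$-dependent factor from the estimate. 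The same defect appears on $K_-$, where you would need $W(s-\tau R^2)-W(s-R^2)$ bounded.

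The missing idea is to exploit the differential inequality \emph{pointwise in time with a nonlinear weight}, rather than integrating the variance and applying Cauchy--Schwarz. In the paper's notation ($w=-\log u$, $W=w-C_2(t-s')$, $W_{\psi^2}$ nonincreasing, $c=W_{\psi^2}(s')$): for $t>s'$, on $\Omega_t^+(\lambda)=\{x\in B_{\delta R}: W>c+\lambda\}$ one has $W-W_{\psi^2}(t)>\lambda+c-W_{\psi^2}(t)>0$ by monotonicity, so the inequality $\partial_t W_{\psi^2}+C_1^{-1}\int_{B_{\delta R}}|W-W_{\psi^2}|^2dm\le 0$ yields
\begin{equation*}
-C_1\,\partial_t\Bigl[\bigl(\lambda+c-W_{\psi^2}(t)\bigr)^{-1}\Bigr]\;\ge\; m\bigl(\Omega_t^+(\lambda)\bigr),
\end{equation*}
and integrating over $(s',s)$ gives $\bar m(\{W>c+\lambda\}\cap K_+)\le C_1\lambda^{-1}$ uniformly in $u$; the drift $C_2(t-s')$ is then handled exactly as you propose, by splitting off $\{C_2(t-s')>\lambda/2\}$. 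Equivalently, in your sign convention you should apply $L^2$-Chebyshev at each time with the time-dependent threshold $\lambda+\bigl(W(t)+c\bigr)$ and use $\sigma^2(t)\le\alpha^{-1}(W'(t)+\beta)$, so that the integral $\int (W'+\beta)\,(\lambda+W+c)^{-2}dt$ telescopes to $O(\lambda^{-1})$; your fixed threshold $\lambda/2$ loses precisely this cancellation.
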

\begin{proof} Let $w:=-\log u$. Then we have $dw=-u^{-1}du$ and $\Lambda^{-1}u^{-1}F^*(du)\leq F^{*}(dw)\leq \Lambda u^{-1}F^*(du)$. Hence, for every $t\in(s-R^2,s)$ and nonnegative function $\psi\in \mathcal{C}_0^{\infty}(B_R)$, we have
\beqn
\int_{B_R}\partial_t(\psi^2 w) \,dm &=& -\int_{B_R} \psi^2 u^{-1} \partial_t u dm= \int_{B_R} d(\psi^2 u^{-1}) (\nabla u) dm\\
&=&2\int_{B_R}\psi u^{-1}d\psi(\nabla u)dm-\int_{B_R}\psi^2 u^{-2}F^{*2}(d u) dm\\
&\leq& 2\Lambda^4\int_{B_R}F^{*2}(d \psi) dm + \frac{1}{2\Lambda^2}\int_{B_R} \psi^2 F^{*2}(d w) dm-\frac{1}{\Lambda^2}\int_{B_R}\psi^2 F^{*2}(d w) dm,
\eeqn
namely,
\be\label{log-1}
\int_{B_R}\partial_t(\psi^2 w) \,dm + \frac{1}{2\Lambda^2}\int_{B_R}\psi^2 F^{*2}(d w) dm \leq 2\Lambda^4\int_{B_R}F^{*2}(d \psi) dm.
\ee

Fix $0<\delta<1$ and define function $\zeta$ such that $\zeta=1$ on $[0,\delta]$, $\zeta(t)=\frac{1-t}{1-\delta}$ on $[\delta,1]$ and $\zeta=0$ on $[1,\infty)$. Choose $\psi=\zeta (d(x_0,\cdot)/R)$. Applying Theorem \ref{weight-Poincare} by letting $\Psi = \psi^2$, we have
\[
\int_{B_{R}}\left|w-w_{\psi^2}\right|^2 \psi^{2} dm \leq d_{1} e^{d_{2} (K+\delta^2)R^2} R^2 \int_{B_{R}} F^{*2}(dw) \psi^{2} dm  ,
\]
where $w_{\psi^2}:=\frac{\int_{B_{R}}w \psi^2 dm} {\int_{B_{R}} \psi^2 dm}$.  Then (\ref{log-1}) can be reduced to
\begin{equation}
\partial_tw_{\psi^2} + C_1^{-1} \int_{B_{\delta R}}\left|w-w_{\psi^2}\right|^2 dm \leq C_2, \label{log-2}
\end{equation}
where $C_{1}:=2\Lambda^2 d_{1}e^{d_2(K+\delta^2)R^2} R^{2} m(B_R)$ and $C_2:=\frac{2\Lambda^6}{(1-\delta)^2 R^2}\delta^{-(n+1)}e^{(K+\delta^2)R^2}$, because of the fact that $m(B_{\delta R})\leq \int_{B_R}\psi^2dm\leq m(B_R)$ and Theorem \ref{volume}. Further, letting $s'=s-\tau R^2$, $W=w-C_2(t-s')$ and $W_{\psi^2}=w_{\psi^2}-C_2(t-s')$, the inequality (\ref{log-2}) can be rewritten as
\be
\partial_tW_{\psi^2} + C_1^{-1} \int_{B_{\delta R}}\left|W-W_{\psi^2}\right|^2 dm \leq 0.  \label{log-3}
\ee

Now, let $c:=w_{\psi^2}(s')=W_{\psi^2}(s')$, and for $\lambda>0$, $s-R^2<t<s$,  define two sets
$$
\Omega^{+}_{t}(\lambda):=\{x\in B_{\delta R} \mid W>c+\lambda\} \quad {\rm and} \quad \Omega^{-}_{t}(\lambda):=\{x\in B_{\delta R}\mid W<c-\lambda\}.
$$
Then if $t>s'$, we have
$$
W-W_{\psi^2}(t)>\lambda+c-W_{\psi^2}(t)>\lambda
$$
in $\Omega^+_t(\lambda)$. Hence, we have
$$
\partial_t W_{\psi^2} + C_1^{-1} \left(\lambda+c-W_{\psi^2}(t)\right)^2m(\Omega^+_t(\lambda)) \leq 0,
$$
namely,
\be
-C_1 \partial_t ((\lambda+c-W_{\psi^2}(t))^{-1})\geq m(\Omega^{+}_{t}(\lambda)). \label{log-ine4}
\ee
Integrating both sides of (\ref{log-ine4}) from $s'$ to $s$ yields
$$
\bar{m}\left(\{(x,t)\in K_{+} \mid W>c+\lambda\}\right)=\bar{m}\left(\{(x,t)\in K_{+}\mid \log u<-c-\lambda-C_2(t-s')\}\right)\leq C_{1}\lambda^{-1}.
$$
Then, it holds that
\beqn
\bar{m}\left(\{(x,t)\in K_{+}\mid  \log u< -\lambda -c\}\right)&\leq& \bar{m}\left(\{(x,t)\in K_{+}\mid C_{2}(t-s')>\lambda/2\}\right)\\
& &+\bar{m}\left(\{(x,t)\in K_{+} \mid \log u<-c-\lambda/2-C_{2}(t-s')\}\right)\\
&\leq& C_{3}\lambda^{-1},
\eeqn
where $C_{3}:= 2 C_{1}+2\tau^2 C_{2}R^{4}m(B_R)$.

Similarly, if $t<s'$, we have
$$
\bar{m}\left(\{(x,t)\in K_{-} \mid \log u> \lambda -c\}\right) \leq C_{3}\lambda^{-1}.
$$
This completes the proof.
\end{proof}

Besides, we need the following elementary lemma which can be obtained by following the argument of Lemma 2.2.6 in  \cite{PS}.

\begin{lem}\label{measure} Suppose that $\{U_{\sigma} \mid 0< \sigma \leq 1\}$ is a family of measurable subsets of a measurable set $U \subset M\times \mathbb{R}$ with the measure $d\bar{m}$ such that $U_{\sigma'}\subset U_{\sigma}$ if $\sigma'\leq\sigma$. Fix $0<\delta <1$. Let $\gamma$ and $C$ be positive constants and $0<\alpha_0\leq\infty$. Let $g$ be a positive measurable function defined on $U_1=U$ which satisfies
\begin{equation}\label{alpha}
\left(\int_{U_{\sigma'}}g^{\alpha_0}d\bar{m}\right)^{\frac{1}{\alpha_{0}}}\leq \left[C(\sigma-\sigma')^{-\gamma}\bar{m}(U)^{-1}\right]^{\frac{1}{\alpha}-\frac{1}{\alpha_0}} \left(\int_{U_{\sigma}}g^{\alpha}d\bar{m}\right)^{\frac{1}{\alpha}}
\end{equation}
for all $\sigma$, $\sigma'$, $\alpha$ satisfying $0<\delta\leq\sigma'<\sigma\leq1$ and $0<\alpha\leq \min\{1, \frac{\alpha_0}{2}\}$. Assume further that $g$ satisfies
\be
\bar{m}(\log g>\lambda)\leq C\bar{m}(U)\lambda^{-1} \label{Lcondi2}
\ee
for all $\lambda>0$. Then
\begin{equation}\label{U}
\left(\int_{U_{\delta}}g^{\alpha_0}d\bar{m}\right)^{\frac{1}{\alpha_{0}}}\leq C_0\bar{m}(U)^{\frac{1}{\alpha_0}},
\end{equation}
where $C_0$ depends only on $\delta$, $\gamma$, $C$ and a lower bound on $\alpha_0$.
\end{lem}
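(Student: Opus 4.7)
The plan is a Moser-style iteration that combines the reverse Hölder inequality (\ref{alpha}) with the weak bound (\ref{Lcondi2}) on $\log g$. Set $F(\sigma) := \bigl(\int_{U_\sigma} g^{\alpha_0}\, d\bar{m}\bigr)^{1/\alpha_0}$ (interpreted as $\mathrm{ess\,sup}_{U_\sigma} g$ when $\alpha_0 = \infty$); the goal is to bound $F(\delta)$. Rewrite (\ref{alpha}) as
\begin{equation*}
F(\sigma') \leq \bigl[C(\sigma-\sigma')^{-\gamma}\bigr]^{1/\alpha-1/\alpha_0}\, \bar{m}(U)^{1/\alpha_0 - 1/\alpha}\left(\int_{U_\sigma} g^\alpha\, d\bar{m}\right)^{1/\alpha},
\end{equation*}
so that the core task reduces to controlling the $L^\alpha$ integral on the right-hand side using the weak bound.

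The key estimate uses a cutoff at level $M > 1$ and Hölder's inequality. Split $\int_{U_\sigma} g^\alpha\, d\bar{m} \leq M^\alpha \bar{m}(U) + \int_{U_\sigma\cap\{g>M\}} g^\alpha\, d\bar{m}$. Apply Hölder with exponents $\alpha_0/\alpha$ and $\alpha_0/(\alpha_0-\alpha)$ to the second term and bound the distribution with $\bar{m}(\{g>M\}) \leq C\bar{m}(U)/\log M$ from (\ref{Lcondi2}) to obtain
\begin{equation*}
\int_{U_\sigma\cap\{g>M\}} g^\alpha\, d\bar{m} \leq F(\sigma)^\alpha \bigl(C\bar{m}(U)/\log M\bigr)^{1-\alpha/\alpha_0}.
\end{equation*}
Substituting back, taking $\alpha = \alpha_0/2$ (permitted by the hypothesis), and choosing $\log M$ of order $(\sigma-\sigma')^{-\gamma}$ so that the resulting coefficient of $F(\sigma)$ is at most $1/2$, yields a recursion
\begin{equation*}
F(\sigma') \leq B(\sigma-\sigma')\,\bar{m}(U)^{1/\alpha_0} + \tfrac{1}{2}\,F(\sigma)
\end{equation*}
where $B$ depends only on $\gamma$, $C$, $\alpha_0$.

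Finally I would iterate this recurrence along a sequence $\delta = \sigma_0 < \sigma_1 < \cdots \nearrow 1$ chosen so that $\sum_k 2^{-k} B(\sigma_{k+1}-\sigma_k)$ is finite; the a priori finiteness of $F(\sigma_k)$ for $\sigma_k < 1$ then makes the tail $2^{-n} F(\sigma_n)$ vanish, giving $F(\delta) \leq C_0 \bar{m}(U)^{1/\alpha_0}$. The main obstacle is balancing the exponential growth of $B(h)$ (stemming from the cutoff $M = \exp(O(h^{-\gamma}))$) against the geometric factor $2^{-k}$ from the iteration: this forces a careful choice of the step sizes $\sigma_{k+1}-\sigma_k$, and possibly a finer splitting than the single-cutoff one above (for instance, partitioning the level sets by both the weak bound and a Chebyshev bound $\bar{m}(\{g > \lambda\}) \leq \lambda^{-\alpha_0} F(\sigma)^{\alpha_0}$), in order to make the series converge with a constant depending only on $\delta, \gamma, C, \alpha_0$. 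The case $\alpha_0 = \infty$ requires a separate treatment since the Chebyshev-type bound is unavailable and one must close the iteration through an $L^\infty$ absorption argument instead.
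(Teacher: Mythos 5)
Your reduction of (\ref{alpha}) and the cutoff-plus-H\"older splitting using (\ref{Lcondi2}) are fine, but the way you propose to close the argument has a genuine gap, and it sits exactly at the point you flag as ``the main obstacle''. With $\alpha$ fixed (note also that $\alpha=\alpha_0/2$ is \emph{not} permitted by the hypothesis when $\alpha_0>2$, since (\ref{alpha}) only allows $\alpha\leq\min\{1,\alpha_0/2\}$; in particular not when $\alpha_0=\infty$) and the cutoff $M$ chosen so that the coefficient of $F(\sigma)$ is $1/2$, one is forced to take $\log M\gtrsim (\sigma-\sigma')^{-\gamma}$, so your recursion is $F(\sigma')\leq B(\sigma-\sigma')\,\bar m(U)^{1/\alpha_0}+\tfrac12 F(\sigma)$ with $B(h)$ of order $\exp(ch^{-\gamma})$. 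No choice of step sizes $\sigma_k\nearrow 1$ can make $\sum_k 2^{-k}B(\sigma_{k+1}-\sigma_k)$ converge once $\gamma\geq 1$: since $\sum_k(\sigma_{k+1}-\sigma_k)\leq 1-\delta$, for every $\epsilon>0$ there are infinitely many $k$ with $\sigma_{k+1}-\sigma_k\leq \epsilon/k$, and then $2^{-k}B(\sigma_{k+1}-\sigma_k)\geq 2^{-k}\exp(c\epsilon^{-\gamma}k^{\gamma})$ does not even tend to zero. In the application in this paper $\gamma$ is of size $2+\nu>4$, so the regime $\gamma<1$ where your scheme would close is not the relevant one; and neither varying the absorption coefficient from step to step nor adding a Chebyshev bound on the level sets repairs this, because the cutoff level (hence $B$) always depends exponentially on $h^{-\gamma}$ as long as $\alpha$ is kept fixed.

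The missing idea is the one the paper relies on by simply following Lemma 2.2.6 in \cite{PS} (the Bombieri--Giusti lemma): you must use the validity of (\ref{alpha}) for \emph{arbitrarily small} $\alpha$, and tie the truncation level of $\log g$ to the size of the quantity being estimated rather than to $\sigma-\sigma'$. Taking the level $\lambda\approx\theta\log F(\sigma')$ makes the cutoff contribution $F(\sigma')^{\theta}$, which is absorbed into the left-hand side, while choosing $\alpha$ roughly inversely proportional to $\log F(\sigma')$ turns the factor $(C/\lambda)^{1/\alpha-1/\alpha_0}$ coming from (\ref{Lcondi2}) into a genuine gain; the prefactor $[C(\sigma-\sigma')^{-\gamma}]^{1/\alpha-1/\alpha_0}$ is then controlled because it is compared with $F(\sigma')$ itself. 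The outcome is a dichotomy of the form ``either $\log F(\sigma')\leq c\,(\sigma-\sigma')^{-c'\gamma}$, or $\log F(\sigma')\leq \rho\,\log F(\sigma)+c''$ with $\rho<1$'', i.e.\ a contraction at the level of $\log F$ whose fallback term is only polynomial in $(\sigma-\sigma')^{-1}$ \emph{after} taking logarithms; iterating along geometric or polynomial steps then converges for every $\gamma>0$, since $\rho^{k}$ beats the polynomial growth $k^{c'\gamma}$ of the fallback --- precisely what your additive recursion in $F$ cannot achieve. Note also that the case $\alpha_0=\infty$ (the one actually used for the supremum bound in the proof of Theorem \ref{harnack}) is handled by the same mechanism, the H\"older step degenerating to $g^{\alpha}\leq(\sup g)^{\alpha}$ on the level set, so it does not require the separate $L^{\infty}$ argument you defer.
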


In the following, based on the Lemmas \ref{log} and \ref{measure} and mean value inequalities (\ref{meanineq-1}) and (\ref{meanineq-sup-1}), we give the proof of Theorem \ref{harnack}.

\begin{proof}[Proof of Theorem \ref{harnack}]
Let $c(u)$ be the constant given in Lemma \ref{log}. Setting $g=e^{c}u$.  By the mean value inequality (\ref{meanineq-1}) in Theorem \ref{mean-ineq} with $p=\frac{1}{2}$,  we have
\beqn
\int_{Q_{\delta}}g dmdt  &\leq & \left(\sup\limits_{Q_{\delta}}g \right) m(B_{\delta R}) R^2 \leq  \left(\sup\limits_{Q_{\delta }}g^{\frac{1}{2}}\right)^2 m(B_{\delta R}) R^2\\
&\leq& e^{2C(1+(K+\delta^2)R^2)} (1-\delta)^{-2(2+\nu)} R^{-2}m\left(B_R\right)^{-1}  \left(\int_{Q} g^{\frac{1}{2}} dmdt\right)^2 ,
\eeqn
which means that (\ref{alpha}) holds for $\alpha_0=1$, $\alpha=\frac{1}{2}$ and $\sigma=1$. Further, by the assumptions, (\ref{c-2}) holds, which means that (\ref{Lcondi2}) holds for $t\in(s-R^2, s-\tau R^2)$. Then (\ref{U}) becomes
\begin{equation}\label{e-v}
\int_{Q_{\delta,\tau}} u ~ dm dt\leq e^{\tilde{C}_{1}(1+ (K+\delta^2)R^2)} \bar{m}(Q)e^{-c},
\end{equation}
where $Q_{\delta,\tau}:=B_{\delta R}\times (s-\delta R^2, s-\tau R^2)$.
Also, letting $g=e^{-c}u^{-1}$ and taking $\alpha_0=\infty$, $\alpha=1$ and $\sigma=1$,  by the same argument, we can conclude from Lemma \ref{measure} and Theorem \ref{mean-sup} with $p=1$ that
\begin{equation}\label{e-u}
\sup\limits_{Q_{\delta,\epsilon}'} \{u^{-1}\} \leq e^{\tilde{C}_{2}(1+ (K+\delta^2)R^2)}e^{c},
\end{equation}
where $Q_{\delta,\epsilon}' :=B_{\delta R}\times (s-\epsilon R^2, s)$. Therefore, from $\inf\limits_{Q_{\delta,\epsilon}'} u=\left(\sup\limits_{Q_{\delta,\epsilon}'} \{u^{-1}\}\right)^{-1}$, we obtain the following
$$
\int_{Q_{\delta,\tau}} u ~dm dt\leq e^{\tilde{C}_{3}(1+ (K+\delta^2)R^2)} \bar{m}(Q)\inf\limits_{Q_{\delta,\epsilon}'}u.
$$
Moreover, for $\rho\in(0,1)$ such that $\rho\delta>\tau$, mean value inequality (\ref{meanineq-1}) with $f=0$ and $p=1$ implies
\begin{equation*}
\sup _{Q_{\rho\delta, \tau}} u \leq e^{C(1+(K+\delta^2)R^2)} (1-\rho)^{-(2+\nu)}  R^{-2}m\left(B_{ R}\right)^{-1}  \int_{Q_{\delta, \tau}} u ~dmdt.
\end{equation*}
Then we immediately obtain
\begin{equation}
\sup\limits_{Q_{\rho \delta, \tau}}u \leq e^{\tilde{C}_{4}(1+ (K+\delta^2)R^2)}\inf\limits_{Q_{\rho\delta, \epsilon}^{'}}u,
\end{equation}
which is the desired inequality by replacing $\rho\delta$ with $\delta$.
\end{proof}

\vskip 5mm


\begin{thebibliography}{Ma}

\bibitem{BaoChern}  Bao, D., Chern, S.S., Shen, Z.: An Introduction to Riemann-Finsler Geometry. GTM 200, Springer, New York (2000)

\bibitem{ChSh} Cheng,  X., Shen, Z.: Some inequalities on Finsler manifolds with weighted Ricci curvature bounded below. Results Math. {\bf 77}, Article number: 70 (2022)

\bibitem{ChernShen}  Chern, S.S., Shen, Z.: Riemann-Finsler Geometry. Nankai Tracts in Mathematics, Vol. 6, World Scientific, Singapore (2005)

\bibitem{Grigor} Grigor'yan, A.: The heat equation on noncompact Riemannian manifolds. Matem. Sbornik {\bf 182}, 55-87 (1991);  Math. USSR-Sbornik {\bf 72}, 47-77 (1992)

\bibitem{Hal} Halmiton, R.S.: The Harnack estimate for the Ricci flow. J. Diff. Geom. {\bf 37}, 225-243 (1993)

\bibitem{Li-Yau} Li, P., Yau, S.-T.: On the parabolic kernel of the Schrodinger operator. Acta Math. {\bf 156}, 153-201 (1986)

\bibitem{Moser} Moser, J.: On Harnack's theorem  for elliptic differential equations. Commun. Pure Appl. Math. {\bf 14}, 577-591 (1961)

\bibitem{MW1} Munteanu, O., Wang, J.: Smooth metric measure spaces with non-negative curvature. Commun. Anal. Geom. {\bf 19}(3), 451-486 (2011)

\bibitem{OHTA} Ohta, S.:  Comparison Finsler Geometry. Springer Monographs in Mathematics, Springer, Cham (2021)

\bibitem{Ohta} Ohta, S.:  Uniform convexity and smoothness, and their applications in Finsler geometry. Math. Ann. {\bf 343}, 669-699 (2009)

\bibitem{OS1} Ohta, S., Sturm K.-T.: Heat flow on Finsler manifolds. Commun. Pure Appl. Math. {\bf 62}(10), 1386-1433 (2009)

\bibitem{OS2} Ohta, S., Sturm, K.T.: Bochner-Weizenb\"{o}ck formula and Li-Yau estimates on Finsler manifolds. Adv. Math. {\bf 252}, 429-448 (2014)

\bibitem{Ra}  Rademacher, H.B.: Nonreversible Finsler metrics of positive flag curvature. In: ``A Sampler of Riemann-Finsler Geometry",  MSRI Publications, {\bf 50}, Cambridge University Press, Cambridge (2004)

\bibitem{LSC} Saloff-Coste, L:  A note on Poincar\'{e}, Sobolev and Harnack inequalities. Duke Math. J. {\bf 65}; Internat. Math. Res. Notices. {\bf 2},  27-38 (1992)

\bibitem{LSC2} Saloff-Coste, L: Uniformly elliptic operators on Riemannian manifolds. J. Diff. Geom. {\bf 36}, 417-450 (1992)

\bibitem{PS} Saloff-Coste, L.: Aspects of Sobolev-Type Inequalities. Cambridge University Press, Cambridge (2001)

\bibitem{shen} Shen, Z.:  Volume comparison and its applications in Riemann-Finsler geometry. Adv. Math. {\bf 128}(2), 306-328 (1997)

\bibitem{Shen1} Shen, Z.:  Lectures on Finsler Geometry.  World Scientific, Singapore (2001)

\bibitem{WuXin} Wu, B., Xin, Y.:  Comparison theorems in Finsler geometry and their applications. Math. Ann. {\bf 337}, 177-196 (2007)

\bibitem{CXia} Xia, C.:  Local gradient estimate for harmonic functions on Finsler manifolds.  Calc. Var. Partial Differ. Equ. {\bf 51}, 849-865 (2014)

\bibitem{XiaQ} Xia, Q.:  Li-Yau's estimates on Finsler manifolds. J. Geom. Anal. {\bf 33}, Article number: 49 (2023)

\bibitem{Yau}  Yau, S.-T.: On the Harnack inequalities for partial differential equations. Commun. Anal. Geom. {\bf 2}, 431-450 (1994)

\end{thebibliography}
\end{document}